\numberwithin{equation}{section}
\numberwithin{table}{section}
\numberwithin{figure}{section}
\newcommand{\e}{\varepsilon}
\newcommand{\ue}{u^{\varepsilon}}
\newcommand{\ve}{v^{\varepsilon}}
\newcommand{\ueeta}{u^{\varepsilon,\eta}}
\newcommand{\bx}{{\bf x}}
\newcommand{\by}{{\bf y}}
\newcommand{\bzeta}{{\bf{\zeta}}}
\newtheorem{remark}{Remark}
\newtheorem{theorem}{Theorem}
\newtheorem{corollary}{Corollary}
\newtheorem{definition}{Definition}
\newtheorem{lemma}{Lemma}
\begin{document}

\title[Multiscale Methods for the Wave Equation]{An Equation-Free Approach for Second Order Multiscale Hyperbolic Problems in Non-Divergence Form}

\author[D.~Arjmand]{Doghonay Arjmand}
\author[G.~Kreiss]{Gunilla Kreiss}

\address{ANMC, Section de Math{\'e}matiques, {\'E}cole Polytechniques F{\'e}d{\'e}rale de Lausanne, Station 8, CH-1015 Lausanne, Switzerland (doghonay.arjmand@epfl.ch)}

\address{Division of Scientific Computing \\
  Department of Information Technology \\
  Uppsala University \\
  SE-751 05 Uppsala, Sweden, (gunilla.kreiss@it.uu.se).}


 \keywords{multiscale methods, homogenization, wave propagation}

\subjclass[2010]{35B27, 65L12, 74Q10}

\maketitle

          \begin{abstract}
              The present study concerns the numerical homogenization of second order hyperbolic equations in non-divergence form, where the model problem includes a rapidly oscillating coefficient function. These small scales influence the large scale behavior, hence their effects should be accurately modelled in a numerical simulation. A direct numerical simulation is prohibitively expensive since a minimum of two points per wavelength are needed to resolve the small scales. A multiscale method, under the equation free methodology, is proposed to approximate the coarse scale behaviour of the exact solution at a cost independent of the small scales in the problem. We prove convergence rates for the upscaled quantities in one as well as in multi-dimensional periodic settings. Moreover, numerical results in one and two dimensions are provided to support the theory.
          \end{abstract}

          \section{Introduction}\label{intro}
 Various engineering applications e.g., from  seismology, medical imaging, or material science require simulations of the wave equation in heterogeneous media. In general, the model problems may be in divergence or non-divergence form having different \emph{homogenized limits} (as the wavelength of the heterogeneities tends to zero), and hence multiscale methods need to be developed depending on the structure of the problem. Multiscale methods for the second order wave equation in divergence form have been developed and analyzed in the past, see e.g., \cite{Abdulle_Grote_1,Engquist_Holst_Runborg_1,Arjmand_Runborg_3,Arjmand_Stohrer}. In the present work, we consider a second-order scalar wave equation in non-divergence form 
\begin{equation} \label{Main_MultiscaleWave_Eqn}
\left\{
\begin{array}{lll}
 \partial_{tt} \ue(t,\bx) =    \displaystyle \sum_{i,j=1}^{d} A_{ij}^{\e}(\bx) \partial_{x_i x_j} \ue(t,\bx) + f(t,\bx), \quad \text{ in } (0,T]\times \Omega \\
\ue(0,\bx) = g(\bx), \quad \partial_t \ue(0,\bx) = h(\bx), \quad \text{ on } \{ t=0 \}\times \Omega, \\
\ue(t,\bx)   = 0, \quad \text{ on } \partial \Omega,
\end{array}
\right.
\end{equation} 
where $\Omega$ is a bounded open subset in $\mathbb{R}^d$ with $|\Omega| = O(1)$, and $A^{\e}$ is a bounded symmetric positive-definite matrix function in $\mathbb{R}^{d \times d}$ such that for every $\bzeta \in \mathbb{R}^{d}$
\begin{equation} \label{Assump_Coeff}
c_1 |\bzeta|^{2} \leq \sup_{\bx \in \Omega} \zeta^{T} A^{\e}(\bx) \bzeta \leq c_2 \left| \bzeta \right|^{2}, \quad \text{ and } A_{ij}^{\e} = A_{ji}^{\e}.
\end{equation}
The homogeneous boundary condition in \eqref{Main_MultiscaleWave_Eqn} is assumed only for simplicity and other well-posed boundary conditions can be treated similarly. The parameter $\e \ll 1$ represents the wavelength of the small scale variations in the media, and $T=O(1)$ is a constant independent of $\e$. 

When $\e \ll 1$, a direct numerical approximation of \eqref{Main_MultiscaleWave_Eqn} is very expensive since the rapid variations in $A^{\e}$ must be represented over a much larger computational domain. In such a case, the tendency is to instead look for an effective or a homogenized solution $u^{0}$ which does not depend on the small scale parameter $\e$. Analytically, this is related to the theory of homogenization, see e.g., \cite{Bensoussan_Lions_Papanicolaou,Cioranescu_Donato,Jikov_Kozlov_Oleinik}, where the goal is to replace the oscillatory coefficient $A^{\e}$ by a slowly varying coefficient $A^{0}$ and solve for the corresponding homogenized solution $u^0$ at a cost independent of $\e$. Mathematically speaking, the homogenized solution $u^{0}$ is obtained in the limit $u^{0} = \lim_{\e \to 0 } u^{\e}$ (this convergence is understood as weakly-* in $L^{\infty}(0,T;L^2(\Omega))$), see e.g., \cite{Bensoussan_Lions_Papanicolaou,Cioranescu_Donato}. For example, when the medium is periodic such that $A^{\e}(\bx) = A(\bx/\e)$ where $A$ is a $Y:=[0,1]^d$-periodic function, $u^{\e}$ converges to a limit solution $u^{0}$ (as $\e \to 0$) which solves 
\begin{equation}  \label{Effective_Eqn}
\left\{
\begin{array}{ll}
\partial_{tt} u^0(t,\bx) = \displaystyle \sum_{i,j=1}^{d} A^{0}_{ij} \partial_{x_i x_j} u^0(t,\bx)+ f(t,\bx), \quad \text{ in } (0,T]\times \Omega \\
u^0(0,\bx) = g(\bx), \quad \partial_t u^0(0,\bx) = h(\bx), \quad \text{ on } \{ t=0 \}\times \Omega. 
\end{array}
\right.
\end{equation} 
Here  the homogenized coefficient $A^{0}$ is a constant matrix given by
\begin{equation}\label{A0_Eqn} 
A^{0} = \int_Y A(\by) \rho(\by) \; d\by,
\end{equation}
and $\rho$ solves the equation
\begin{equation} \label{Invarient_Dist_Eqn}
\left\{ 
\begin{array}{ll}
\displaystyle  -\sum_{i,j=1}^{d} \partial_{y_i y_j} \left( A_{ij}(\by) \rho(\by) \right) = 0, \text{ in } Y=[0,1]^d, \\
\int_{Y} \rho(\by) \; d\by  = 1, \quad \rho \text{ is } Y\text{-periodic}.
\end{array}
\right.
\end{equation}

\begin{remark} \label{Rem_Hom_Div}When the main problem \eqref{Main_MultiscaleWave_Eqn} is in divergence form, i.e., 
\begin{equation} \label{Main_MultiscaleWaveDiv_Eqn}
\left\{
\begin{array}{ll}
 \partial_{tt} \ve(t,\bx) =    \nabla \cdot \left(  A^{\e}(\bx) \nabla \ve(t,\bx)  \right) + f(t,\bx), \quad \text{ in } (0,T]\times \Omega \\
\ve(0,\bx) = g(\bx), \quad \partial_t \ve(0,\bx) = h(\bx), \quad \text{ on } \{ t=0 \}\times \Omega,
\end{array}
\right.
\end{equation} 
the corresponding homogenized equation reads as

\begin{equation}  \label{Eqn_Intro_DivHomogenization}
\left\{
\begin{array}{ll}
\partial_{tt} v^0(t,\bx) &= \nabla \cdot   \Big( A_{div}^{0}(\bx) \nabla v^0(t,\bx) \Big) + f(t,\bx), \quad \text{ in } (0,T] \times \Omega   \\
v^0(0,\bx) &= g(\bx), \quad \partial_t v^0(0,\bx) = h(\bx), \quad \text{ on } \{ t=0 \} \times  \Omega. 
\end{array}
\right.
\end{equation} 
If the medium is additionally periodic such that $A^{\e}(\bx) = A(\bx/\e)$ for a $Y$-periodic matrix function $A$, the homogenized coefficient $A^{0}_{div}$ is a constant matrix given by
\begin{equation}\label{eqn_A0Div} 
[A^{0}_{div}]_{ij} = \int_Y \left( A_{ij}(\by)  + \sum_{k=1}^{d} A_{ik} \partial_{y_k} \chi_{j}(\by) \right) \; d\by,
\end{equation}
and $\{ \chi_{j} \}_{j=1}^{d}$ are $Y$-periodic solutions of the following set of cell problems
\begin{align*} \label{eqn_CellProblemChi}
\nabla_{\by} \cdot \left(  A(\by) \nabla_{\by} \chi_{j}(\by)    + A(\by) e_{j}  \right) = 0, \quad \int_{Y} \chi_{j}(\by) \; d\by = 0,
\end{align*}
where $\{  e_{j} \}_{j=1}^{d}$ are canonical basis vectors in $\mathbb{R}^{d}$. 
\end{remark}

The main drawback of analytical homogenization is that explicit formulas for the homogenized matrix $A^{0}$ are available only in a few academic cases of interests such as the periodic case \eqref{A0_Eqn}. To treat more realistic scenarios, e.g., where slow and fast variations (not particularly periodic) are allowed at the same time, several general purpose multiscale approaches were proposed over the last two decades. Variational multiscale methods (VMM) due to Hughes et al. \cite{Hughes_Feijoo_Mazzei_Quincy}, multiscale finite element methods (MsFEM) due to T. Hou et al. \cite{Hou_Wu}, heteregeneous multiscale methods (HMM) due to E and Engquist \cite{E_Engquist_1}, and the equation free approach due to Kevrikidis et al. \cite{Kevrikidis_etal} are among such successful examples. The overall goal behind such strategies is to approximate the solution $u^{\e}$ (or $u^{0}$) with no a priori knowledge about the structure of $A^{\e}$ or the homogenized coefficient $A^{0}$. Several multiscale methods have been designed and analysed under the above-mentioned frameworks. Without being exhaustive, we refer to \cite{Abdulle,Babuska,Gloria_2,Henning_Malqvist_1,Arjmand_Runborg_1,Hou_Wu,Efendiev_Hou_Book} for applications to elliptic problems, see \cite{Abdulle_Vilmart_1,Arjmand_Stohrer,Samaey_Roose_Kevrikidis_2005} for applications to parabolic problems, and \cite{Abdulle_Grote_1,Abdulle_Henning_1,Arjmand_Runborg_2,Arjmand_Runborg_3,Engquist_Holst_Runborg_1} for applications to second order wave equations.  Other alternative approaches are wavelet based numerical homogenization due to Engquist and Runborg  \cite{Engquist_Runborg_1}, and the harmonic coordinate transformations due to Owhadi et al. \cite{Owhadi_Zhang_Berlyand,Owhadi_Zhang_1}.

In general, the homogenized limits for the wave equation in non-divergence and divergence forms are different from each other, cf. \eqref{Effective_Eqn} and \eqref{Eqn_Intro_DivHomogenization}. Multiscale methods are typically designed based on some assumptions about the form of the homogenized equation (although homogenized parameters may not be known explicitely). This makes the multiscale modelling of the the wave equation in divergence and non-divergence form differ from each other. Moreover, the analysis for the multiscale methods for the wave equation in divergence form typically exploits the symmetry properties of the operator $-\nabla \cdot \left(  A^{\e} \nabla  \right)$, which is missing for problems in non-divergence form.

In the present article, we develop and analyse an equation free type multiscale approach for a numerical approximation of the wave equation \eqref{Main_MultiscaleWave_Eqn}. The general idea behind the equation free approach (EFA) is to assume a coarse scale model of the form $\partial_{tt} U = F(U,\nabla U, \nabla^2 U, \ldots)$, and compute (upscale) $F$ locally by simulating the original multiscale problem in small domains with a size comparable to the size of the smallest scale in the PDE. While doing this, the microscopic problems are also provided with the coarse scale data, $i.e., U, \nabla U, \nabla^2 U, \ldots$. Therefore, the coupling between the microscale and the macroscale model should be understood as a two-way coupling. The efficiency of the method comes from the fact that, multiscale problems of the form \eqref{Main_MultiscaleWave_Eqn} are solved only in small temporal and spatial domains, while the method still retains a good approximation of the overall macroscopic behavior. The main requirement for the EFA is the assumption of scale separation; namely that the wavelength, $\e \ll 1$, of the microscopic variations is much smaller than the size of the computational geometry (which is assumed to be $O(1)$ in this paper). Moreover, the generality of the method is due to the fact that no knowledge (other than the assumption of scale separation) about the properties of the media, or the precise value of the small scale parameter $\e$ are assumed.

Although the equation free approach has been developed in the context of numerical homogenization for parabolic problems and hyperbolic conservation laws, see e.g. \cite{Samaey_Kevrikidis_Roose_2006,Samaey_Roose_Kevrikidis_2006,Samaey_Roose_Kevrikidis_2005}, not much of attention has been given to applications to the second order wave equation. Conceptually, the method developed in the present work is similar to the HMM-based multiscale numerical methods \cite{Abdulle_Grote_1,Engquist_Holst_Runborg_1,Arjmand_Runborg_3}, for problems in divergence form, but with a few changes in the way the micro- and the macromodels are coupled, the form of the macroscopic and microscopic equations, and the upscaling procedure, see Section \ref{HMM_Sec}.  From an analysis point of view, the previous theoretical results rely on the symmetry property of the operator $-\nabla \cdot A^{\e} \nabla$ and hence the previous theories can not be directly used for the wave equation \eqref{Main_MultiscaleWave_Eqn} due to the breakdown of the symmetry of the operator. This paper aims at generalizing the previous analysis, which is valid only for symmetric operators, to non-symmetric operators of type   \eqref{Main_MultiscaleWave_Eqn}. For the analysis, we consider a periodic setting, where $A^{\e}_{ij}(\bx) = \delta_{ij} a^{\e}$ and $a^{\e}(\bx) = a(\bx/ \e)$, where $a$ is a smooth $Y$-periodic function. An analysis in one and higher ($d=2,3$) dimensions is presented. As the one dimensional theory is much simpler than the one for higher dimensions, the former is presented first. The ideas are then extrapolated and extended to higher dimensions. The periodicity assumption is used only to simplify the theory but the method itself is numerically shown to perform equally well for more complicated coefficients (e.g. almost periodic functions, and locally-periodic functions).

This paper is structured as follows. In Section \ref{HMM_Sec}, the multiscale method is presented. In Section \ref{Prelim_Sec}, a few utility results are introduced.  Section \ref{Analysis_Sec} includes the main result of this article, which is an analysis for the upscaling error. In a subsequent section, an error estimate for the difference between the fully-discrete numerical solution, see Remark \ref{Rem_FullyDiscrete}, and the exact homogenised solution is given. The last section of this article contains numerical results for one and two dimensional problems.

\section{The Multiscale Method}
\label{HMM_Sec}
The main components of the multiscale strategy proposed here are a macro- and a micromodel. The macromodel describes the coarse scale part of the solution $\ue$ to problem \eqref{Main_MultiscaleWave_Eqn}. The macromodel reads as
\begin{equation}
\label{Intro_Macro_Model}
\text{Macromodel: }
\left\{
\begin{array}{lll}
 \partial_{tt} U(t,\bx)  - F(\bx,\nabla^2 U) = f(\bx), \quad \text{ in } (0,T] \times \Omega  \\
U(0,\bx) = g(\bx), \quad \partial_t U(0,\bx) = h(\bx), \quad \text{ on } \{ t=0 \} \times \Omega \\
U(t,\bx)  = 0 \quad \text{ on }   [0,T] \times \partial \Omega. 
\end{array}
\right.
\end{equation}
Here $U$ is the macroscopic solution, $F$ is the missing quantity in the macromodel, and $\nabla^2 U$ represents all the mixed second-derivatives in $d$-dimensions. For simplicity, it is assumed that $\Omega = [0,L]^d$. A finite difference discretization of the macro problem \eqref{Intro_Macro_Model} gives
\begin{equation} \label{Macro_Solver_Eqn}
\begin{array}{ll}
U_{I}^{n+1} = 2 U_{I}^{n} - U_{I}^{n-1} + \Delta t^{2} \left(  F_{I}^{n} + f_{I}^{n} \right). 
\end{array}
\end{equation}
Here $I = (i_1,i_2,\ldots,i_d)$ is a multi-index, and $U_{I}^{n}$ represents the macroscopic solution at the point $(\bx  = \bx_I, t = t_n)$, where $\{ \bx_I  = I \triangle x\}$, with $0 \leq i_j \leq N_x$, $N_x \triangle x  = L$, and $t_n = n \triangle t$, with $N_t \triangle t  =  T$. Moreover,  $U^0_{I} = g_{I}$, and $U^{1}_{I}$ is given by 
\begin{align*}
U^{1}_{I} & \approx U(\triangle t, \bx_{I}) \approx U(0,\bx_{I}) + \triangle t \partial_t U(0,\bx_{I}) + \dfrac{\triangle t^2}{2} \partial_{tt} U(0,\bx_{I}) \\
               & \approx g_I  + \triangle t h_I + \dfrac{\triangle t^2}{2} \left(  F_I^{0}(\bx_I, \nabla^2 g_I) + f_I \right),
\end{align*}
where $U(0,\bx), \partial_t U(0,\bx)$ are directly replaced by the initial data in \eqref{Intro_Macro_Model}, and the term $\partial_{tt} U(0,\bx)$ is rewritten using the equation \eqref{Intro_Macro_Model}, which also requires computing $F$ at time $t=0$. To compute the missing quantity $F_{I}^{n}$ in the macro solver \eqref{Macro_Solver_Eqn}, we solve the multiscale problem \eqref{Main_MultiscaleWave_Eqn} over a microscopic box $I_{\tau} \times \Omega_{\bx_{I},\eta}$, where $I_{\tau} = (0,\tau/2]$ and $\tau/2$ is the final time for the microscopic simulations, and $\Omega_{\bx_{I},\eta}:= \bx_{I} + [-\ell_{\eta}, \ell_{\eta}]^{d}$ where $\ell_{\eta} \geq \frac{\eta}{2} + \frac{\tau}{2} \sqrt{|A|_{\infty}} $\footnote{The condition $\ell_{\eta} \geq \frac{\eta}{2} + \frac{\tau}{2} \sqrt{|A|_{\infty}} $ is to ensure that the boundary conditions of the micromodel \eqref{Intro_Micro_Problem} do not have any influence on the interior solution.}, and in practice $\tau =\eta = O(\e)$; see also Remarks \ref{Rem_Cost_Tau_Eps} and \ref{Rem_FiniteSpeedOfWaves}. In other words, we solve
\begin{equation}
\label{Intro_Micro_Problem}
\text{Micromodel: }
\left\{
\begin{array}{lll}
\partial_{tt}u^{\e,\eta}(t,\bx)  - \displaystyle \sum_{i,j=1}^{d} A^{\e}_{ij}(\bx) \partial_{x_i x_j} u^{\e,\eta}(t,\bx) = 0,  \text{ in }  I_{\tau} \times \Omega_{\bx_{I},\eta} \\
u^{\e,\eta}(0,\bx)  = \hat{u}(\bx), \quad \partial_{t}u^{\e,\eta}(0,\bx)= 0,  \quad \text{ on } \{ t=0 \} \times {\Omega_{\bx_{I},\eta}}, \\
u^{\e,\eta} - \hat{u} \quad \text{ is periodic on } \quad  {\Omega_{\bx_{I},\eta}},
\end{array}
\right.
\end{equation}
where $\hat{u}(\bx)$ is a  quadratic polynomial approximating the coarse scale data $U_{I}^{n}$, in the least square sense, at the point $\bx_{I}$. The choice of quadratic polynomials for $\hat{u}$ is to ensure the \emph{consistency}, see Definition \ref{Def_Consistency}, of the microscopic simulations with the current macroscale data. From a modeling point of view, the issue of consistency is known to be one of the necessary conditions for the EFA type algorithms to perform well, see e.g. \cite{E_Engquist_1,Samaey_Roose_Kevrikidis_2005,Samaey_Kevrikidis_Roose_2006,Samaey_Roose_Kevrikidis_2006}.

\begin{remark} \label{Rem_Cost_Tau_Eps}Note that if $\tau = \eta  = O(\e)$, the computational cost of solving the micro problem \eqref{Intro_Micro_Problem} becomes independent of $\e$ since the solution will contain only few oscillations, in time and space, within the microscopic domain. Moreover, typical multiscale numerical methods result in errors of the form $(\e/\eta)^{e}$ for some $e \geq 1$, see e.g.\ \cite{Engquist_Holst_Runborg_1,Arjmand_Runborg_1}, which motivates the need for the additional constraint $\eta = \tau > \e$, as otherwise we would get $O(1)$ errors. 
\end{remark}
For the local averaging we introduce the space $\mathbb{K}^{p,q}$ which consists of functions $K \in C^{q}(\mathbb{R})$ compactly supported in $[-1,1]$, and $K^{(q+1)} \in BV(\mathbb{R})$, where the derivative is understood in the weak sense and $BV$ is the space of functions with bounded variations on $\mathbb{R}$, see e.g. \cite{Engquist_Tsai_1,Arjmand_Runborg_1,Arjmand_Runborg_2} for details. Moreover, the parameter $p$ represents the number of vanishing moments
$$
\int_{\mathbb{R}} K(t) t^r dt  = 
\begin{cases}
1 & r=0, \\
0 & r \leq p.
\end{cases}
$$
As local averaging takes place in a domain of size $\eta$, we consider the scaled kernel
$$
K_{\eta}(x) = \dfrac{1}{\eta} K(x/\eta).
$$
Finally, the flux $F_{I}^{n}$ is computed by\footnote{The dependency of $F_I^n$ on the second derivative $\nabla^2 U$ comes from the fact that the micro solution $u^{\e,\eta}$ depends on $\nabla^2 U$ through the initial data.} 
\begin{equation}\label{Intro_HMM_Flux}
F_I^{n} := F(\bx_I, \nabla^2 U(t_n)) = \left( \mathcal{K}_{\tau,\eta} \ast \sum_{i,j}A_{ij}^{\e} \partial_{x_i x_j} \ueeta(\cdot,\cdot) \right)(0,\bx_{I}),
\end{equation}
where
\begin{equation*}
\left( \mathcal{K}_{\tau,\eta} \ast f \right)(t,\bx) := \int_{t-\tau/2}^{t+\tau/2}\int_{\Omega_{\bx,\eta}} K_{\eta}(\tilde{\bx}-\bx) K_{\tau}(\tilde{t}- t) f(\tilde{t}, \tilde{\bx}) \; d\tilde{\bx} \; d\tilde{t},
\end{equation*}
and where in $d$-dimension, $K_{\eta}(\bx)$ is understood as 
$$
K_{\eta}(\bx) = K_{\eta}(x_1)K_{\eta}(x_2) \cdots K_{\eta}(x_d).
$$
This completes all the steps for the EFA solution $U^{n}_I$, solving \eqref{Macro_Solver_Eqn}, to approximate the solution $u^{0}$ of the homogenised equation \eqref{Effective_Eqn}. Moreover, comparing the homogenized equation \eqref{Effective_Eqn} with the macromodel \eqref{Intro_Macro_Model}, one can see that the numerical solution $U$ will stay close to the homogenized solution $u^{0}$ if the upscaled data $F$, given in \eqref{Intro_HMM_Flux},  is close to the homogenized quantity:

\begin{equation} \label{Definition_Fhat}
\hat{F}(\bx,\nabla^{2} U)  = \sum_{i=1}^{d} A^{0}_{ij} \partial_{x_i x_j} U(\bx). 
\end{equation}
Therefore, the main part of the analysis is to give a bound for the difference $| F - \hat{F}|$.

When compared to HMM type algorithms for the wave equation, cf.\    \cite{Engquist_Holst_Runborg_1}, the multiscale algorithm described here has three main differences: $1)$ the macromodel \eqref{Intro_Macro_Model} is of the form $\partial_t U  - F = f$, while the macromodel in \cite{Engquist_Holst_Runborg_1} has the form $\partial_t U  - \nabla \cdot F = f$, $2)$ the initial data of the micromodel \eqref{Intro_Micro_Problem} is a second order polynomial while in \cite{Engquist_Holst_Runborg_1} a linear polynomial is used as initial data, $3)$  the upscaling step \eqref{Intro_HMM_Flux} contains a second derivative of the microscopic solution, while in \cite{Engquist_Holst_Runborg_1} the first derivative of the microscopic solution is used in the upscaling step. These differences are mainly due to the fact that the homogenized equation corresponding to multiscale wave equations in non-divergence form is different from that in divergence form. Moreover, the choice of the periodic boundary conditions in the micromodel \eqref{Intro_Micro_Problem} is not unique and one may also use Dirichlet boundary conditions, e.g.\ $u^{\e,\eta} = \hat{u}$, similar to the standard HMM algorithms.

\begin{definition} \label{Def_Consistency} The coarse scale data $\hat{u}(\bx)$ is called a consistent initial data (up to $O(\delta)$) for the micro problem \eqref{Intro_Micro_Problem} if  
$$\left( \mathcal{K}_{\tau,\eta} \ast \ueeta \right)(0,\bx)  = \hat{u}(\bx) + O(\delta),    \quad \text{ for all }   \bx \in I_{\tau} \times \omega_{\eta},$$
where $\ueeta$ solves the micro problem \eqref{Intro_Micro_Problem}, and $\omega_{\eta}  = \bx_I + [-\eta/2,\eta/2]^d$ is the interior region of the microscopic domain $\Omega_{\bx_I,\eta}$. 
\end{definition}

\begin{remark} Note that in the upscaling step \eqref{Intro_HMM_Flux}, we need  the values of the solution for the micro problem \eqref{Intro_Micro_Problem} in the time interval $[-\tau/2,0)$. This requires no additional cost since  the symmetry property $u^{\e,\eta}(t,\bx) = u^{\e,\eta}(-t,\bx)$ easily follows due to the condition $\partial_t u^{\e,\eta}(0,\bx) = 0$.
\end{remark}

\begin{remark}\label{Rem_FiniteSpeedOfWaves} Observe that (because of the compact support of the kernel  $K_{\eta}( \bx - \bx_I)$ in $\bx_I + [-\eta/2,\eta/2]^d$) the local averaging in the upscaling step \eqref{Intro_HMM_Flux} takes place in an interior region of $\Omega_{\bx_I,\eta}$; namely the  region $I_{\tau} \times \omega_{\eta}$, where $\omega_{\eta}  = \bx_I + [-\eta/2,\eta/2]^d$. When $\ell_{\eta} \geq \frac{\eta}{2} + \frac{\tau}{2} \sqrt{|A|_{\infty}}$, the solution $\ueeta$ to the micro problem \eqref{Intro_Micro_Problem} in the region $I_{\tau} \times \omega_{\eta}$, is not affected by the periodic boundary conditions of the micromodel \eqref{Intro_Micro_Problem}. This is due to the finite speed of propagation of waves, see e.g. \cite{Evans_Book}; i.e., the near boundary waves do not have  enough time to reach the region $\omega_{\eta}$ over the time interval $I_{\tau}$.
\end{remark}

\begin{remark}\label{Rem_FullyDiscrete} In practice, to compute a fully discrete counterpart of the EFA solution $U^n_I$, one needs to discretise the micromodel \eqref{Intro_Micro_Problem}, and the integral \eqref{Intro_HMM_Flux}. Later in the analysis, we denote this fully discrete solution by $\tilde{U}^{n}_{I}$. We assume that the micromodel is solved by a Leap frog scheme, see Section \ref{FullyDiscrete_Sec}. Moreover, for the analysis (as it is the case also for the numerical examples in this paper), we assume that a standard trapezoidal rule is used for the integration in \eqref{Intro_HMM_Flux}.
\end{remark}

\section{Preliminaries}
\label{Prelim_Sec}
The numerical method developed in the previous section is designed for treating coefficients satisfying the general conditions \eqref{Assump_Coeff}. However, The analysis will be given only for isotropic material modelled by coefficients of the form $A^{\e}(\bx) = a(\bx/\e) I $, where $a \in C^{\infty}_{Per}(Y)$ is a $Y$-periodic scalar function. In this case, the homogenized coefficient, from \eqref{A0_Eqn}, is a constant matrix and given by, see e.g. \cite{Froese_Oberman_09},
\begin{equation} \label{Eqn_HarmonicMean}
A^{0} = a^{0} I, \text{ where } a^{0}  = \left(  \int_{Y} \dfrac{1}{a(\by)} d\by \right)^{-1}.
\end{equation}
In general, the homogenised coefficient $A^{0}$ for non-divergence structures, given by \eqref{A0_Eqn}, is different than the homogenised coefficient $A^{0}_{div}$, computed by \eqref{eqn_A0Div}, for divergence structures. However, under a special theoretical setting, they are equal to each other, see Remark \ref{Rem_OneDimen_Hom}. This fact, together with the Theorem \ref{Thm_FDiv}, given below, for divergence structures will be used in a part of the analysis in one-dimension. 

\begin{remark} \label{Rem_OneDimen_Hom} In one-dimensional periodic media, the homogenized coefficient $A^0_{div}$ is the same as the homogenized coefficient $A^0$, given by the harmonic mean \eqref{Eqn_HarmonicMean}.
\end{remark}


\begin{theorem}\label{Thm_FDiv}Let $\hat{v}$, with $\left| \nabla \hat{v} \right|_{\infty} < \infty$, be a linear polynomial, and $A^{\e}(\bx):=A(\bx/\e)$ where $A \in (C^{\infty}_{Per}(Y))^{d\times d}$ is a $Y$-periodic uniformly elliptic and bounded matrix function, and assume that $v^{\e,\eta}$ solves the micro problem
\begin{equation}
\label{Micro_Problem_Div}
\text{Micro problem: }
\left\{
\begin{array}{lll}
\partial_{tt}v^{\e,\eta}(t,\bx)  - \nabla \cdot \left( A^{\e}(\bx) \nabla v^{\e,\eta}(t,\bx)  \right)= 0,  \text{ in }  I_{\tau} \times \Omega_{\bx_{0},\eta} \\
v^{\e,\eta}(0,\bx)  = \hat{v}(\bx), \quad \partial_{t}v^{\e,\eta}(0,\bx)= 0,  \quad \text{ on } \{ t=0 \} \times {\Omega_{\bx_{0},\eta}}, \\
v^{\e,\eta}(t,\bx) - \hat{v}(\bx) \text{ is } \Omega_{\bx_{0},\eta}\text{-periodic}.
\end{array}
\right.
\end{equation}
Moreover, let
$$
F_{div}  = \left( \mathcal{K}_{\tau,\eta} \ast A^{\e}(\bx) \nabla_{\bx} v^{\e,\eta}(\cdot,\cdot) \right)(0,\bx_0), \quad \bx_0 \in \Omega_{\bx_0,\eta},
$$
and $\hat{F}_{div} = A^{0}_{div} \nabla \hat{v} (\bx_0)$, where $A^{0}_{div}$ is given by \eqref{eqn_A0Div}. Then we have
$$
\left| F_{div} - \hat{F}_{div} \right| \leq C   \left(   \dfrac{\e}{\eta} \right)^{q+2}  \left|   \nabla \hat{v} \right|_{\infty},
$$
where $C$ is a constant independent of $\e$ and $\eta$ but may depend on $K,p,q$ or $A$.
\end{theorem}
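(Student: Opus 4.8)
The plan is to reduce the wave-equation upscaling quantity to a cell-problem quantity via a homogenization-type expansion of $v^{\e,\eta}$, and then exploit the regularizing (averaging) properties of the kernel $\mathcal{K}_{\tau,\eta}$ to kill the oscillatory remainder. Since $\hat v$ is linear, write $\hat v(\bx) = \hat v(\bx_0) + \nabla\hat v\cdot(\bx-\bx_0)$, so that $\nabla\hat v$ is a constant vector; by linearity of the micro problem \eqref{Micro_Problem_Div} in the data we may assume $\hat v(\bx) = \by\cdot\nabla\hat v$ up to an additive constant, and it suffices to understand $v^{\e,\eta}$ for each coordinate datum $\hat v = x_k$. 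First I would introduce the formal two-scale ansatz $v^{\e,\eta}(t,\bx) \approx \hat v(\bx) + \e\,\chi(\bx/\e)\cdot\nabla\hat v + \e^2(\cdots) + (\text{time-dependent corrector})$, where $\chi_j$ are the cell solutions from Remark \ref{Rem_Hom_Div}. Plugging into the divergence-form PDE, the leading-order flux $A^{\e}\nabla v^{\e,\eta}$ is $A(\bx/\e)(\nabla\hat v + \nabla_{\by}\chi(\bx/\e)\nabla\hat v) + O(\e)$, whose $Y$-average is precisely $A^0_{div}\nabla\hat v = \hat F_{div}$ by \eqref{eqn_A0Div}. So $F_{div} - \hat F_{div}$ is, up to controllable lower-order terms, the local average $\mathcal{K}_{\tau,\eta}\ast\big[(\text{periodic, mean-zero oscillation of period }\e)\big]$ plus the time-dependent part.

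The two estimates I would then carry out are: (i) the spatial averaging of a $Y$-periodic mean-zero function $\bx\mapsto b(\bx/\e)$ against $K_\eta$ gives $O((\e/\eta)^{q+2})$ — this is exactly the kernel-moment lemma referenced via $\mathbb{K}^{p,q}$ (Fourier-side: $\widehat{K_\eta}$ vanishes to high order at the nonzero dual-lattice frequencies $2\pi k/\e$, with decay governed by $K^{(q+1)}\in BV$, producing the power $q+2$); (ii) control of the transient/time-dependent corrector. For (ii), the key point is that $\partial_t v^{\e,\eta}(0,\cdot)=0$ together with the linear (hence ``already homogenized at $t=0$'') structure of the data means the time-oscillatory part enters at high order, and the temporal average against $K_\tau$ with $\tau\sim\eta$ again contributes $O((\e/\eta)^{q+2})$ via the same moment cancellation in time. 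Throughout, the finite-speed-of-propagation remark ensures that on $I_\tau\times\omega_\eta$ the periodic boundary of $\Omega_{\bx_0,\eta}$ is invisible, so the local solution genuinely behaves like the whole-space periodic corrector and the expansion is legitimate there.

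Assembling: $|F_{div}-\hat F_{div}| \le C(\e/\eta)^{q+2}|\nabla\hat v|_\infty$ after collecting the spatial-average term, the temporal-average term, and the $O(\e)$ higher-order-corrector terms (the latter are in fact smaller once multiplied through, and are absorbed since $\e/\eta<1$). The constant $C$ depends on $K$ through its moments, on $p,q$ through the regularity/vanishing-moment count, and on $A$ through ellipticity bounds and the $C^\infty_{Per}$ norms of $\chi$, but not on $\e$ or $\eta$.

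The main obstacle I anticipate is making the transient time-dependent corrector rigorous rather than formal: unlike the elliptic/parabolic cell problem, the wave corrector does not settle to a stationary profile, so one must show that its contribution to the double average $\mathcal{K}_{\tau,\eta}\ast(A^{\e}\nabla v^{\e,\eta})$ is genuinely $O((\e/\eta)^{q+2})$ and not merely $O(\e)$. I would handle this by diagonalizing the microscale evolution (separation of variables / Bloch–Floquet on the scaled cell) so that each mode oscillates in $t$ with frequency $\sim c/\e$, and then invoking the temporal moment-cancellation of $K_\tau$ exactly as in the spatial estimate; controlling the number and size of the relevant modes (and checking the sum converges, using $\hat v$ linear so only finitely many low modes are excited to leading order) is where the real work lies.
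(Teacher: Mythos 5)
For Theorem~\ref{Thm_FDiv} the paper gives no self-contained argument: it simply quotes the result from the proof of the main theorem in \cite{Arjmand_Runborg_3}, so the relevant comparison is with that proof and with the paper's own parallel treatment of the non-divergence case (Lemma~\ref{lem:kernel2} combined with Lemma~\ref{Lem:Averaging}). Your backbone is the same one used there: write the (boundary-effect-free, by finite speed of propagation) micro solution as an eigenfunction/Bloch expansion of the rescaled periodic operator, observe that each mode contributes a temporal oscillation $\cos(\sqrt{\lambda_j}\,t/\e)$, kill it to order $(\e/\tau)^{q+2}$ by the moment/regularity properties of $K_\tau$, and then average the remaining mean-zero $\e$-periodic spatial oscillation of the flux against $K_\eta$ to pick up the second $(\e/\eta)^{q+2}$ factor; the two-scale ansatz you begin with is only a heuristic guide and is not needed once the spectral argument is run. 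One correction to the part you flag as ``where the real work lies'': it is not true that, because $\hat v$ is linear, only finitely many low modes are excited. After subtracting the initial data, the effective forcing is $\nabla\cdot\bigl(A(\by)\nabla\hat v\bigr)$, a general mean-zero smooth $Y$-periodic function, so generically \emph{all} modes carry nonzero coefficients; what makes the modewise sum controllable uniformly in $\e$ is not finiteness but regularity --- either rapid decay of the eigencoefficients of the smooth data, or, as in the paper's Lemma~\ref{lem:kernel2}, the bound $|c_j|\lesssim \lambda_j^{-(q+2)/2}$ from the averaging lemma combined with an elliptic-regularity/Parseval estimate (applying powers of the cell operator to the remainder) that bounds the total error by $\|f\|_{L^2(Y)}\lesssim|\nabla\hat v|_\infty$. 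With that repair your outline matches the established proof.
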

\begin{proof}
A proof of this statement can be found e.g. in the proof of the main Theorem in \cite{Arjmand_Runborg_3}.  
\end{proof}

We finish the section by presenting an averaging lemma, which will also be used later in the analysis.
\begin{lemma}\label{Lem:Averaging}(Lemma $1$ in \cite{Arjmand_Runborg_1}) Let $f$ be a $1$-periodic bounded function such that $f \in L^{\infty}(Y)$ and let $K \in \mathbb{K}^{p,q}$. Then with $\bar{f}:= \int_{0}^{1} f(y) \; dy$, and $\e \leq \eta$, we have
$$
\left|  \int_{-\eta/2}^{\eta/2} K_{\eta}(x) f(x/\e) \; dx  - \bar{f}  \right|   \leq C \left( \dfrac{\e}{\eta} \right)^{q+2} \left|  f  \right|_{\infty},
$$
where $C$ does not depend on $\e, \eta$ or $f$, but may depend on $K,p,q$.
\end{lemma}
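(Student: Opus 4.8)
The plan is to reduce this averaging estimate to a one–dimensional integration by parts in which the periodicity and zero mean of $f-\bar{f}$ are exchanged, repeatedly, for factors of $\e/\eta$.

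First I would rescale. Substituting $x=\eta s$ and using $K_{\eta}(x)=\eta^{-1}K(x/\eta)$, the left-hand side becomes $\bigl|\int_{\mathbb{R}}K(s)\,f(\lambda s)\,ds-\bar{f}\bigr|$ with $\lambda:=\eta/\e\ge 1$ (here I use $\e\le\eta$, and that $K$ has compact support to write the integral over all of $\mathbb{R}$). By the zeroth moment normalization $\int_{\mathbb{R}}K=1$ I can pull the constant into the integral and rewrite this as $\bigl|\int_{\mathbb{R}}K(s)\,g(\lambda s)\,ds\bigr|$, where $g:=f-\bar{f}$ is $1$-periodic, has vanishing mean over $Y$, and satisfies $|g|_{\infty}\le 2|f|_{\infty}$.

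Next I would build iterated periodic primitives of $g$. Since $g$ is $1$-periodic with zero mean, $s\mapsto\int_{0}^{s}g$ is again $1$-periodic; subtracting its mean over $Y$ produces a $1$-periodic, zero–mean function $G_{1}$ with $G_{1}'=g$ and $|G_{1}|_{\infty}\le 2|g|_{\infty}$. Iterating this construction $q+2$ times yields $1$-periodic functions $G_{1},\dots,G_{q+2}$ with $G_{0}:=g$, $G_{k}'=G_{k-1}$, and $|G_{k}|_{\infty}\le C_{k}|f|_{\infty}$; note $\tfrac{d}{ds}G_{k}(\lambda s)=\lambda\,G_{k-1}(\lambda s)$. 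Because $K\in C^{q}$ is compactly supported, all boundary terms vanish and $q+1$ successive integrations by parts give
\[
\int_{\mathbb{R}}K(s)\,g(\lambda s)\,ds=\frac{(-1)^{q+1}}{\lambda^{q+1}}\int_{\mathbb{R}}K^{(q+1)}(s)\,G_{q+1}(\lambda s)\,ds,
\]
which is legitimate because $K^{(q)}$ is absolutely continuous (its weak derivative $K^{(q+1)}$ lies in $BV\subset L^{1}_{\mathrm{loc}}$). For the last step $K^{(q+1)}$ is only of bounded variation, so I would use the Lebesgue–Stieltjes integration by parts: writing $G_{q+1}(\lambda s)=\lambda^{-1}\tfrac{d}{ds}G_{q+2}(\lambda s)$ and integrating over an interval strictly containing $\mathrm{supp}\,K$ (on a neighborhood of whose endpoints $K^{(q+1)}$ vanishes),
\[
\int_{\mathbb{R}}K^{(q+1)}(s)\,G_{q+1}(\lambda s)\,ds=-\frac{1}{\lambda}\int G_{q+2}(\lambda s)\,dK^{(q+1)}(s),
\]
whose modulus is at most $\lambda^{-1}|G_{q+2}|_{\infty}\,\mathrm{TV}(K^{(q+1)})$. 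Combining the two displays yields $\bigl|\int K(s)g(\lambda s)\,ds\bigr|\le C\lambda^{-(q+2)}|f|_{\infty}=C(\e/\eta)^{q+2}|f|_{\infty}$ with $C=C(K,q)$. (The vanishing–moment parameter $p$ plays no role in this particular bound; it enters only when the same argument is later applied with $f$ multiplied by a polynomial.)

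The main obstacle is really the combination of two bookkeeping points. First, the iterated primitives must be kept $1$-periodic and $O(|f|_{\infty})$-bounded, which is exactly where periodicity of $f$ — rather than mere boundedness — is used, and the mean subtraction at each stage cannot be skipped. Second, the final integration by parts must be carried out in the Stieltjes sense because $K^{(q+1)}$ is only $BV$, and one has to check that the boundary contributions vanish; this follows since $K\in C^{q}$ compactly supported in $[-1,1]$ forces $K^{(j)}\equiv 0$ outside $(-1,1)$ for $j\le q$, and hence $K^{(q+1)}$ vanishes on a neighborhood of the endpoints of any larger interval. Everything else is routine constant tracking.
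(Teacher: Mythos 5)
Your proof is correct: after rescaling to $\lambda=\eta/\varepsilon$, the construction of zero-mean $1$-periodic iterated primitives of $f-\bar{f}$, the $q+1$ classical integrations by parts (legitimate since $K\in C^{q}$ is compactly supported and $K^{(q)}$ is absolutely continuous), and the final Lebesgue--Stieltjes step exploiting $K^{(q+1)}\in BV$ yield exactly the bound $C(\varepsilon/\eta)^{q+2}|f|_{\infty}$, and you are right that the moment parameter $p$ plays no role in this particular estimate. The paper itself gives no proof of this lemma---it is quoted from Lemma~1 of the cited reference---and your argument is essentially the standard proof used there. One minor remark: you implicitly (and correctly) read the integration domain as covering the full support of $K_{\eta}$; as literally written, integrating only over $[-\eta/2,\eta/2]$ while $K$ is supported in $[-1,1]$ would truncate the kernel, which is an inconsistency inherited from the statement rather than a gap in your argument.
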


\section{Analysis}
\label{Analysis_Sec}
When the medium is isotropic and microscopically periodic, the micro problem \eqref{Intro_Micro_Problem} is simplified as
\begin{align} \label{Micro_Isotropic_HigherDimensions}
\partial_{tt} \ueeta(t,\bx) &= a^{\e}(\bx) \triangle \ueeta(t,\bx), \nonumber    \quad \text{ in   }   I_{\tau} \times \Omega_{\bx_0,\eta} \\
\ueeta(0,\bx)&= \hat{u}(\bx) , \quad \partial_t \ueeta(0,\bx) = 0,  \text{ on }   \{ t=0\} \times \Omega_{\bx_0,\eta} \\
\ueeta(t,\bx) &- \hat{u}(\bx)  \quad  \text{   is  periodic in }   \Omega_{\bx_0,\eta}  \nonumber .
 \end{align}
Here $a^{\e}(\bx) = a(\bx/\e)$, where $a$ is a $Y$-periodic coefficient, $\hat{u}(\bx)$ is a quadratic polynomial in $d$-dimensions, and $\triangle$ is the usual Laplace operator in $d$-dimensions. The main aim is to prove that the upscaled quantity given by \eqref{Intro_HMM_Flux} approximates the quantity $\hat{F}$ given by 
$$
\hat{F} = a^{0} \triangle \hat{u},
$$
where $a^{0}$ is the harmonic mean in \eqref{Eqn_HarmonicMean}. The precise statement of the main Theorem is as follows:
\begin{theorem} \label{Thm_MultiD_F}Let $\hat{u}$ be a quadratic polynomial, and assume that $A^{\e}(\bx):=a(\bx/\e) I$ where $a \in C^{\infty}(Y)$ is a $Y$-periodic, positive, and bounded coefficient, and that $u^{\e,\eta}$ solves the micro problem \eqref{Intro_Micro_Problem} in dimensions $d=1,2$ or $d=3$. Then\footnote{Note that, when the periodic coefficient $A^{\e}(\bx)$ is isotropic, i.e., $A^{\e}(\bx)  = a(\bx/\e) I$, then not all the mixed second derivatives are present in the homogenised equation 
\eqref{Effective_Eqn}; hence the operator $\nabla^2 \hat{u}$ in  \eqref{Intro_HMM_Flux} and \eqref{Definition_Fhat} is reduced to $\Delta \hat{u}$.}
\begin{equation}
\label{Ineq_Main_Estimate}
\sup_{\bx_0 \in \overline{\Omega}}\left| F(\bx_{0},\triangle \hat{u}) - \hat{F}(\bx_0,\triangle \hat{u}) \right| \leq C   \left(   \dfrac{\e}{\eta} \right)^{q+2}  \left|   \triangle \hat{u} \right|_{\infty},
\end{equation}
where $F$ and $\hat{F}$ are given by \eqref{Intro_HMM_Flux} and \eqref{Definition_Fhat}, respectively, $\eta > \e$, and $C$ is independent of $\e$ and $\eta$ but may depend on $K,p,q$ or $A$.
\end{theorem}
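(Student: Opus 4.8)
The plan is to reduce the $d$-dimensional, non-divergence, isotropic micro problem to a combination of one-dimensional divergence-form micro problems, so that Theorem~\ref{Thm_FDiv} and Lemma~\ref{Lem:Averaging} can be applied. Write $\hat{u}(\bx) = \sum_{k=1}^{d} \tfrac{c_k}{2} x_k^2 + (\text{lower order})$, so that $\triangle\hat{u} = \sum_k c_k$ is constant. Because $a^{\e}(\bx) = a(\bx/\e)$ and the Laplacian decouples coordinatewise, I would first check that the micro solution $\ueeta$ inherits a product/sum structure from the initial data: the quadratic initial data together with the periodic-minus-$\hat{u}$ boundary condition forces $\ueeta - \hat{u}$ to be $\e$-periodic in each variable, and one can seek $\ueeta(t,\bx) = \hat{u}(\bx) + \sum_k c_k\, w_k(t, x_k/\e)$ where each $w_k$ solves a scalar one-dimensional wave problem with coefficient $a$ — except that the Laplacian $a^{\e}\triangle$ in several variables does \emph{not} literally separate because $a^{\e}$ depends on all of $\bx$. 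So the honest first step is: establish the precise ODE/PDE the corrector $\weeta := \ueeta - \hat{u}$ satisfies, namely $\partial_{tt}\weeta = a^{\e}\triangle \weeta + a^{\e}\,\triangle\hat{u}$ with zero initial data and $\e$-periodic boundary data, and then extract from it whatever structure dimension $d=1,2,3$ actually allows.

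Next I would exploit the key algebraic identity linking the non-divergence upscaled flux to a divergence-form one. The upscaled quantity is $F = \big(\mathcal{K}_{\tau,\eta} \ast a^{\e}\,\triangle\ueeta\big)(0,\bx_0)$. Using the equation $a^{\e}\triangle\ueeta = \partial_{tt}\ueeta$ inside the convolution and integrating by parts twice in $t$ against the kernel $K_\tau$ (whose derivatives are controlled and which has $p$ vanishing moments), one converts $F$ into an average of $\ueeta$ itself against $K_\tau''$; this is the standard HMM trick and it is where the factor $(\e/\eta)^{q+2}$ ultimately originates once Lemma~\ref{Lem:Averaging} is invoked on the spatial average. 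Alternatively — and this is the route the text hints at via Remark~\ref{Rem_OneDimen_Hom} and Theorem~\ref{Thm_FDiv} — in one dimension $a^{\e}\partial_{xx}\ueeta = \partial_x(a^{\e}\partial_x\ueeta) - (\partial_x a^{\e})\partial_x\ueeta$, and one rewrites the non-divergence micro problem via the change of unknown that turns $a^{\e}\partial_{xx}$ into a divergence operator (using the invariant density $\rho$), so that the $1$D estimate becomes an immediate corollary of Theorem~\ref{Thm_FDiv} since $a^0 = a^0_{div}$ there. For $d=2,3$ I would either iterate this coordinatewise (treating $\triangle = \sum_k \partial_{x_k x_k}$ term by term, each term handled as in the $1$D case after freezing the structure of $\weeta$) or directly run the integration-by-parts-in-time argument and then apply the $d$-dimensional tensor-product version of Lemma~\ref{Lem:Averaging}.

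After the reduction, the remaining work is bookkeeping: bound $\hat{F} = a^0\triangle\hat{u}$ against the spatially-averaged micro quantity, show the time-averaging contributes no error because of the time-reversal symmetry $\ueeta(t,\bx)=\ueeta(-t,\bx)$ and the vanishing moments of $K_\tau$ (so the temporal average of the smooth macroscopic part is exact up to the polynomial order, and the oscillatory part is killed by $K_\eta$ in space), and finally invoke Lemma~\ref{Lem:Averaging} with $f = a$ or $f = a\cdot(\text{bounded periodic corrector derivative})$ to get the $(\e/\eta)^{q+2}$ rate with constant depending only on $K,p,q,a$. The supremum over $\bx_0 \in \overline{\Omega}$ is harmless since the constant in Lemma~\ref{Lem:Averaging} is uniform and, by the finite-speed-of-propagation observation in Remark~\ref{Rem_FiniteSpeedOfWaves}, the averaging region $I_\tau\times\omega_\eta$ never feels the boundary.

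The main obstacle, I expect, is precisely the lack of genuine separation of variables in $d=2,3$: because $a^{\e}(\bx)$ couples the coordinates, $\weeta$ is not a sum of one-dimensional correctors, so one cannot simply quote Theorem~\ref{Thm_FDiv} coordinatewise. The real content of the proof will be showing that, \emph{for the upscaled flux} (not the solution itself), the cross-coupling terms either vanish under the symmetric averaging kernel or are themselves of size $O((\e/\eta)^{q+2})$ — equivalently, that the periodic cell problem governing the $d$-dimensional corrector still produces an effective coefficient equal to the harmonic mean along each axis with no cross terms, which is exactly the statement in the footnote that $\nabla^2\hat{u}$ reduces to $\Delta\hat{u}$. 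Pinning down this structural fact rigorously, and controlling the regularity of $\weeta$ well enough (in $t$ up to order $q+2$) to justify the integrations by parts against $K_\tau$, is where the delicate estimates will concentrate; the restriction to $d\le 3$ suggests a Sobolev-embedding or elliptic-regularity input for the cell-type problem that is only clean in low dimensions.
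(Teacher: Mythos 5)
Your one-dimensional sketch is essentially the paper's argument: differentiating the micro problem in $x$ turns it into a divergence-form wave equation for $\partial_x \ueeta$ with linear initial data $\partial_x\hat{u}$, so Theorem \ref{Thm_FDiv} together with $a^0=a^0_{div}$ (Remark \ref{Rem_OneDimen_Hom}) gives the bound; your variant via the invariant density would presumably also work, but it is not needed. The problem is the case $d=2,3$, which is the real content of the theorem, and there your proposal has a genuine gap that you acknowledge yourself: you defer exactly the step where all the work lies (``pinning down this structural fact rigorously \dots is where the delicate estimates will concentrate''), and neither of your two suggested routes closes it. Treating $\triangle=\sum_k\partial_{x_kx_k}$ coordinatewise cannot be made to work as stated, since the corrector is not a sum of one-dimensional correctors; and the integration-by-parts-in-time trick (replacing $a^{\e}\triangle\ueeta$ by $\partial_{tt}\ueeta$ and moving derivatives onto $K_\tau$) is never carried out and, crucially, contains no mechanism by which the harmonic mean $a^0$ could emerge: that constant comes from the invariant-measure structure of the non-divergence cell problem, which your manipulation never touches. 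Note also that your claim that the time averaging ``contributes no error'' is not right; in the correct argument it contributes precisely the $(\e/\tau)^{q+2}$ remainder that has to be estimated.

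What the paper actually does in higher dimensions is to differentiate the micro equation with respect to \emph{every} coordinate, obtaining a coupled system for the gradient components $v^{\e,\eta}_i$, and then to rescale, $\e\tilde{v}_i(t/\e,\bx/\e)=v^{\e,\eta}_i(t,\bx)$, so that $U=(\tilde{v}_1,\dots,\tilde{v}_d)$ solves $\partial_{tt}U+L[U]=\triangle\hat{u}\,\nabla a$ with $L[U]=-\nabla\left(a\,\nabla\cdot U\right)$, see \eqref{Systemform_Eqn}. The symmetry you miss is recovered at this level: $L$ is symmetric and, on the space $\mathcal{V}$ of curl-free, mean-zero, $Y$-periodic fields, the form $\int_Y a(\nabla\cdot U)(\nabla\cdot V)$ is coercive (Lemma \ref{Lem:Coercivity}), and a higher-order regularity result (Lemma \ref{Lem:HighOrderRegularity}) permits an eigenfunction expansion. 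The time-averaging Lemma \ref{lem:kernel2} then shows that $d_U=K_\tau\ast U$ satisfies $L[d_U]=\triangle\hat{u}\,\nabla a+(\e/\tau)^{q+2}R$ with $\|R\|_{H_{div}(Y)}\leq C|\triangle\hat{u}|$. Integrating these equations coordinatewise shows that $a(\by)\left(\nabla\cdot d_U+\triangle\hat{u}\right)$ is a constant up to an $O((\e/\tau)^{q+2})$ correction, and the constant is identified as $a^0\triangle\hat{u}$ precisely by dividing by $a$ and averaging over $Y$ (this is where the harmonic mean appears); the spatial kernel average of the resulting periodic function is then handled by Lemma \ref{Lem:Averaging}. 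None of these ingredients (the gradient system, the curl-free function space and its coercivity, the time-averaging lemma with its remainder bound, and the explicit identification of the constant) appear in your proposal, so the multidimensional half of the theorem remains unproved.
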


 Note that, in Theorem \ref{Ineq_Main_Estimate}, the choice of $\eta > \e$  is crucial in order to make the error small (by taking larger values for $q$).

The proof of this theorem in one-dimension is fairly short, while the proof in higher dimensions ($d=2$ or $d=3$) requires some additional work. Therefore, we separate the analysis, and start with proving the Theorem in one-dimension.

\begin{proof}(Proof of the Theorem \ref{Thm_MultiD_F} in one-dimension)
The micro problem \eqref{Intro_Micro_Problem}, with $\hat{u}(x) = s_0  + s_1 x + s_2 x^2$, becomes
\begin{equation*}
\left\{
\begin{array}{lll}
\partial_{tt}u^{\e,\eta}(t,x)  - a(x/\e) \partial_{xx} u^{\e,\eta}(t,x) = 0,  \\
u^{\e,\eta}(0,x)  = s_0 + s_1 x + s_2 x^2, \quad \partial_{t}u^{\e,\eta}(0,x)= 0, \\
\ueeta - \hat{u} \quad \text{ is periodic in } \Omega_{x_0,\eta}. 
\end{array}
\right.
\end{equation*}
Now let us define $v^{\e,\eta}(t,x):= \partial_x u^{\e,\eta}(t,x)$, and rewrite $F$ as 
\begin{align*}
F  &= \left( \mathcal{K}_{\tau,\eta} \ast a(\cdot / \e) \partial_{x x} \ueeta(\cdot,\cdot) \right)(0,x_{0}) \\
&= \left( \mathcal{K}_{\tau,\eta} \ast a(\cdot / \e) \partial_{x} v^{\e,\eta}(\cdot,\cdot) \right)(0,x_{0}).
\end{align*}
Next take the derivative of the micro model with respect to $x$ to see that, with $\hat{v}(x):= \partial_x \hat{u}(x) = s_1 + \frac{s_2}{2} x$,
\begin{equation*}
\left\{
\begin{array}{lll}
\partial_{tt}v^{\e,\eta}(t,x)  - \partial_x \left( a(x/\e) \partial_{x} v^{\e,\eta} \right) = 0,  \\
v^{\e,\eta}(0,x)  = \hat{v}(x), \quad \partial_{t}v^{\e,\eta}(0,x)= 0, \\
v^{\e,\eta} - \hat{v} \quad  \text{ is periodic in } \Omega_{x_0,\eta}. 
\end{array}
\right.
\end{equation*}
The last equation is  a wave equation in divergence form, and the Theorem \ref{Thm_FDiv} is applicable. In one-dimensional periodic media, the homogenised coefficients for the divergence and the nondivergence structures are the same, see Remark \ref{Rem_OneDimen_Hom}, and are given by $a^{0} = \left( \int_{0}^{1} \frac{1}{a(y)} \; dy \right)^{-1}$. Hence by the definition of $\hat{v}$, and an application of the Theorem \ref{Thm_FDiv}, it follows that

$$
 \left|  F  - a^{0} \partial_{xx} \hat{u}  \right| = \left| F  - a^{0} \partial_{x} \hat{v}   \right|  \leq  C \left(  \dfrac{\e}{\eta}  \right)^{q+2} | \partial_{xx} \hat{u}|.
$$
This completes the proof of the theorem in one-dimension.
\end{proof}
The idea behind the proof in one dimension is that a differentiation of the wave equation in non-divergence form brings the equation into the divergence form and hence earlier results relying on the symmetry of the divergence structure can be used to complete the proof. In higher dimensions a symmetric operator can be obtained by writing a system of equations for the spatial derivatives of the microscopic solutions, and a generalisation of the earlier results for the scalar equations will be needed to complete the proof. We start with an outline of the proof of the multidimensional setting. 

{\bf{Outline of the proof in multi-dimensions}:} 
\bigskip

{\bf Step 1.} We pose the micro-problem \eqref{Micro_Isotropic_HigherDimensions} over the entire $\mathbb{R}^d$. This does not cause any change in computational results and is only to simplify the analysis. The boundary conditions of the micro problem do not have any effect in the interior solution by the finite speed of propagation of waves, see  Remark \ref{Rem_FiniteSpeedOfWaves}. Therefore, the replacement of the micro problem \eqref{Micro_Isotropic_HigherDimensions} with an infinite domain counterpart, i.e., equation \eqref{Micro_Isotropic_InfiniteDomain}, is safely allowed.

{\bf Step 2.} We introduce new variables $\{ v^{\e,\eta}_i(t,\bx):= \partial_{x_i} u^{\e,\eta}(t,\bx) \}_{i=1}^{d}$, and write down explicit equations for $v_{i}^{\e,\eta}$.

{\bf Step 3.} We do the rescaling $\e \tilde{v}_i(t/\e , \bx/\e) := v^{\e,\eta}_i(t,\bx)$, and write down a system of coupled PDEs for $U(t,\by) = [\tilde{v}_1(t, \by), \tilde{v}_2(t,\by), \ldots, \tilde{v}_d(t,\by)]$. Moreover, we present the relevant function spaces, and study the regularity of a related system, which will then be used in Step $4$.

{\bf Step 4.} We introduce the local time averages $d_{\tilde{v}_i}(\by) := \left( K_{\tau} \ast \tilde{v}_i \right)(0,\by)$, and write down explicit equations for all $d_{\tilde{v}_i}$.

{\bf Step 5.} We express the upscaled quantity; i.e., $F$ given by \eqref{Intro_HMM_Flux}, in terms of the local averages $d_{\tilde{v}_i}$, and give the final estimate.

\begin{proof} (Proof of the Theorem \ref{Thm_MultiD_F} in higher dimensions)

{\bf Step 1.} We start by posing the micro problem \eqref{Micro_Isotropic_HigherDimensions} over the entire space $\mathbb{R}^d$.  The infinite domain problem reads as

\begin{equation} \label{Micro_Isotropic_InfiniteDomain}
\left\{
\begin{array}{ll}
\partial_{tt} \ueeta(t,\bx) &= a^{\e}(\bx) \triangle \ueeta(t,\bx),   \quad \text{ in   }   I_{\tau} \times \mathbb{R}^d\\
\ueeta(0,\bx)&= x_1^2 , \quad \partial_t \ueeta(0,\bx) = 0,  \text{ on }   \{ t=0\} \times \mathbb{R}^d,
\end{array}
\right.
\end{equation}

where we have assumed, without loss of generality, that $\hat{u}(\bx) = x_1^2$.  This is justified by the facts that i) $\ueeta = 0$ if the initial data is of the form $ \sum_{i=1}^{d}\sum_{j \neq i}  \alpha_{i,j} x_i^{k} x_j^{l}$ for all $k,l \in \{ 0,1 \}$ such that $k+l \leq 1$, ii) the problem is linear with respect to the initial data, iii) the upscaled quantity is symmetric with respect to the spatial coordinates $x_1$,$x_2$, $\ldots,x_d$.

{ \bf Step 2.} Let $v^{\e,\eta}_1(t,\bx) = \partial_{x_1} \left( \ueeta(t,\bx)  - x_1^2  \right)$, and $\{v^{\e,\eta}_i(t,\bx) = \partial_{x_i} \ueeta(t,\bx)\}_{i=2,3}$.  Let 
$$
L_{ij}^{\e}[ \varphi ](\bx):= \partial_{x_i} \left(  a^{\e} \partial_{x_j} \varphi \right)(\bx), \quad  i,j = 1,\ldots,d.
$$
Then taking the derivative of \eqref{Micro_Isotropic_InfiniteDomain} with respect to $x_1$, and using the relation $v_1^{\e,\eta} + 2 x_1  = \partial_{x_1} u^{\e,\eta}$, we obtain
\begin{align} \label{Eqn_w1}
\partial_{tt} v^{\e,\eta}_1(t,\bx) - L_{11}^{\e}[v^{\e,\eta}_1](t,\bx) &= L_{12}^{\e}[ v^{\e,\eta}_2](t,\bx) + L_{13}^{\e}[v^{\e,\eta}_3](t,\bx)+2 \partial_{x_1} a^{\e}(\bx), \\
v^{\e,\eta}_1(0,\bx) &= 0, \quad \partial_t v^{\e,\eta}_1(0,\bx) = 0. \nonumber
\end{align}
Similarly  the derivative with respect to $x_2$, and $x_3$ results in
\begin{align} \label{Eqn_u2}
\partial_{tt} v^{\e,\eta}_2(t,\bx) - L_{22}^{\e}[v^{\e,\eta}_2](t,\bx) &= L_{21}^{\e}[ v^{\e,\eta}_1](t,\bx)  + L_{23}^{\e}[v^{\e,\eta}_3](t,\bx)+ 2 \partial_{x_2} a^{\e}(\bx), \\
v^{\e,\eta}_2(0,\bx) &= 0, \quad \partial_t v^{\e,\eta}_2(0,\bx) = 0, \nonumber
\end{align}
and
\begin{align} \label{Eqn_u3}
\partial_{tt} v^{\e,\eta}_3(t,\bx) - L_{33}^{\e}[v^{\e,\eta}_3](t,\bx) &= L_{31}^{\e}[ v^{\e,\eta}_1](t,\bx)  + L_{32}^{\e}[v^{\e,\eta}_2](t,\bx)+ 2 \partial_{x_3} a^{\e}(\bx), \\
v^{\e,\eta}_3(0,\bx) &= 0, \quad \partial_t v^{\e,\eta}_3(0,\bx) = 0. \nonumber
\end{align}
{\bf Step 3.} We now introduce $$\{ \e \tilde{v}_i(t/\e,\bx/\e) := v^{\e,\eta}_{i}(t,\bx)\}_{i=1,2,3}.$$
Moreover, letting 
$$
U := \begin{pmatrix}
\tilde{v}_1 \\
\tilde{v}_{2} \\
\tilde{v}_3
\end{pmatrix},  \quad L := -
 \begin{pmatrix}
  \partial_{x_1} \left( a \partial_{x_1} \right) & \partial_{x_1} \left( a \partial_{x_2} \right)  & \partial_{x_1} \left( a \partial_{x_3} \right)  \\
  \partial_{x_2} \left( a \partial_{x_1} \right) & \partial_{x_2} \left( a \partial_{x_2} \right)  & \partial_{x_2} \left( a \partial_{x_3} \right)  \\
  \partial_{x_3} \left( a \partial_{x_1} \right) & \partial_{x_3} \left( a \partial_{x_2} \right)  & \partial_{x_3} \left( a \partial_{x_3} \right)
 \end{pmatrix}
,$$
and writing equations \eqref{Eqn_w1} and \eqref{Eqn_u2} in terms of the rescaled variables $U$, we arrive at\footnote{Note that, the number $2$ in the right hand side of the equation \eqref{Systemform_Eqn} can be replaced by $\triangle \hat{u}$, since  $ \triangle \hat{u}=2$. See Steps $1$ and $2$ to verify this.}
 
\begin{equation} \label{Systemform_Eqn}
\begin{array}{ll}
\partial_{tt} U(t,\by)  + L[U] = 2 \nabla a(\by), \\
U(0,\by)  = {\bf 0}, \quad \partial_t U(0,\by) ={\bf 0}.
\end{array}
\end{equation}
A few useful properties can be immediately observed from \eqref{Systemform_Eqn}. First, as the coefficient $a$ is $Y:=[0,1]^d$-periodic, it follows that the solution $U(t,\cdot)$ is also $Y$-periodic. Moreover, integrating the equation in the unit cell $Y$, we obtain
\begin{align*}
\partial_{tt} \int_Y  U(t,\by) \; d\by  + \int_Y L[U](t,\by) \; d\by   = 2 \int_Y \nabla a(\by) \; d\by.
\end{align*}
The right hand is equal to zero as the function $a$ is $Y$-periodic. Moreover, the second term in the left hand side is zero since $L[U] := - \nabla \left(  a \nabla \cdot U \right) $, and $a$ and $U$ are periodic in $Y$. We are then left with 
$$
\partial_{tt} \int_Y  U(t,\by) \; d\by  = 0.
$$
This equality, together with the zero initial data in \eqref{Systemform_Eqn}, implies that 
\begin{equation*} 
\int_Y U(t,\by) \; d\by  = 0, \quad \text{ for all }  t>0.
\end{equation*}
Let us also define the space 
\begin{equation}\label{Hdiv_Space_Def}
H_{div}(Y) := \{  U \in \left( L^{2}_{per}(Y) \right)^d, \quad  \nabla \cdot U \in L^{2}_{per}(Y)  \}.
\end{equation}
The following Lemma gives the coercivity of the operator $L$ in \eqref{Systemform_Eqn} in an appropriate function space.

\begin{lemma} \label{Lem:Coercivity}Let 
\begin{equation}\label{Def:FunctionSpaceV}
\mathcal{V} = \{f :  f \in  H_{div}(Y),  \int_{Y} f(\by) \; d\by = {\bf 0},  \text{ and }  \nabla \times f = {\bf 0} \}.
\end{equation}
Then the symmetric bilinear form $B : \mathcal{V} \times \mathcal{V} \longrightarrow \mathbb{R}$, given by
$$
B[U,V] = \int_{Y} a (\nabla \cdot U) (\nabla \cdot V) \; d\by
$$
is continuous and coercive with respect to the norm defined by
$$
\langle  U , V \rangle = \int_{Y} U(\by) \cdot V(\by)  \; d\by + \int_{Y} (\nabla \cdot U) (\nabla \cdot V) \; d\by, \quad \| U \|  = \sqrt{\langle U,U\rangle}, \quad U,V \in \mathcal{V}.
$$
\end{lemma}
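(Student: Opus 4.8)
The plan is to verify continuity first, which is immediate, and then concentrate on coercivity, which is the heart of the statement. For continuity, since $a$ is bounded, $|B[U,V]| \le |a|_\infty \int_Y |\nabla\cdot U|\,|\nabla\cdot V|\,d\by \le |a|_\infty \|\nabla\cdot U\|_{L^2}\|\nabla\cdot V\|_{L^2} \le |a|_\infty \|U\|\,\|V\|$ by Cauchy--Schwarz and the definition of the norm $\|\cdot\|$. The symmetry of $B$ is obvious. So the only real content is to produce a constant $\alpha>0$ with $B[U,U] = \int_Y a |\nabla\cdot U|^2\,d\by \ge \alpha \|U\|^2 = \alpha\big(\|U\|_{L^2}^2 + \|\nabla\cdot U\|_{L^2}^2\big)$ for all $U \in \mathcal{V}$.

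Using the lower bound $a \ge c_1 > 0$ (the scalar analogue of \eqref{Assump_Coeff}), we get $B[U,U] \ge c_1 \|\nabla\cdot U\|_{L^2}^2$ for free, so it remains to control the full $L^2$-norm of $U$ itself by $\|\nabla\cdot U\|_{L^2}$. This is where the three defining constraints of $\mathcal{V}$ enter crucially: $U$ is curl-free, mean-zero, and $Y$-periodic. A curl-free periodic vector field on the torus can be decomposed (Helmholtz/Hodge decomposition on $\mathbb{T}^d$) as $U = \nabla \phi + c$ with $\phi$ periodic and $c$ a constant vector; the constant part $c$ is exactly the mean of $U$, which vanishes by the mean-zero condition, so $U = \nabla\phi$ with $\phi \in H^1_{per}(Y)$ having zero mean (normalising $\phi$). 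Then $\nabla\cdot U = \Delta\phi$, and I would expand $\phi$ in a Fourier series $\phi = \sum_{k \ne 0} \hat\phi_k e^{2\pi i k\cdot\by}$: Parseval gives $\|U\|_{L^2}^2 = \sum_{k\ne 0} 4\pi^2 |k|^2 |\hat\phi_k|^2$ and $\|\nabla\cdot U\|_{L^2}^2 = \sum_{k\ne 0} 16\pi^4 |k|^4 |\hat\phi_k|^2$. Since $|k| \ge 1$ for all nonzero integer vectors $k$, termwise we have $4\pi^2|k|^2|\hat\phi_k|^2 \le 16\pi^4|k|^4|\hat\phi_k|^2$, hence $\|U\|_{L^2}^2 \le \|\nabla\cdot U\|_{L^2}^2$. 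This is precisely the Poincaré-type inequality that fails for general $H_{div}$ fields but holds on $\mathcal{V}$ because of the curl-free and zero-mean restrictions. Combining, $\|U\|^2 = \|U\|_{L^2}^2 + \|\nabla\cdot U\|_{L^2}^2 \le 2\|\nabla\cdot U\|_{L^2}^2 \le \tfrac{2}{c_1} B[U,U]$, so coercivity holds with $\alpha = c_1/2$.

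The main obstacle — and the only place requiring care — is justifying the Hodge-type representation $U = \nabla\phi$ rigorously at the level of $L^2$ regularity, i.e.\ that a distributional curl-free $L^2$ periodic field with zero mean is a periodic gradient. I would handle this via the Fourier characterisation directly: writing $U = \sum_k \hat U_k e^{2\pi i k\cdot\by}$ with $\hat U_k \in \mathbb{C}^d$, the condition $\nabla\times U = 0$ forces $\hat U_k$ to be parallel to $k$ for each $k \ne 0$, say $\hat U_k = 2\pi i k\, \hat\phi_k$, and $\int_Y U = 0$ gives $\hat U_0 = 0$; this both defines $\phi$ and makes the subsequent Parseval computation transparent, sidestepping any need for elliptic PDE machinery. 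One should also note that $\nabla\cdot U \in L^2_{per}(Y)$ is built into membership in $H_{div}(Y)$, so all the Fourier sums above are genuinely finite and the manipulations are legitimate.
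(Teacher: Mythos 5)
Your proof is correct and follows essentially the same route as the paper: continuity is immediate, and coercivity rests on the observation that a curl-free, mean-zero, periodic field is the gradient of a periodic potential, whence $\|U\|_{L^2(Y)} \leq C \|\nabla \cdot U\|_{L^2(Y)}$. The only difference is in justification, not in substance: where the paper appeals to elliptic regularity for $\Delta \Phi = \nabla \cdot U$, you verify the same Poincar\'e-type inequality by an explicit Fourier computation (which also yields the sharp constant), a perfectly valid and self-contained substitute.
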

\begin{proof} The continuity $B[U,V] \leq \| U \| \| V \|$ is clear. 
The coercivity follows from 
\begin{align*}
B[U,U] &= \int_Y a \left( \nabla \cdot U \right) \left( \nabla \cdot U  \right) \; d\by \geq C \| U \|^2.
\end{align*}
Note that in the last step, we used the inequality $\| U \|_{L^2(Y)}    \leq   C \|  \nabla \cdot U  \|_{L^2(Y)}$; which holds since $U$ is a curl-free field. In other words, since $\nabla \times U= 0$, it follows that $U = \nabla \Phi  + {\bf c}$, where $\Phi$ is a $Y$-periodic function, and ${\bf   c}$ is a constant vector. Furthermore,  since $U$ has zero average, it follows that ${\bf c} = {\bf  0}$. Finally, taking the divergence of $\nabla \Phi$, and applying Elliptic regularity, we obtain $\| \nabla \Phi \|_{L^2(Y)} \leq \|  \nabla \cdot U  \|_{L^2(Y)}$.
\end{proof}
We end this step by giving a regularity result for time independent equations involving high order powers of the operator $L$ in equation \eqref{Systemform_Eqn}. This result, summarised in Lemma \ref{Lem:HighOrderRegularity}, together with Lemma \ref{Lem:Coercivity} will be used later in Step $4$.
\begin{lemma} \label{Lem:HighOrderRegularity}Suppose that $U,f \in C^{\infty}_{per}(Y) \cap \mathcal{V}$, where the space $\mathcal{V}$ is defined in \eqref{Def:FunctionSpaceV}, and assume that 
$$
L^n[ U ](\by)  = f(\by), \quad \text{ in }  Y, 
$$
where $n \geq 1$ is a positive integer. Then, the following regularity result holds
$$
\|  U  \|_{H_{div}(Y)} \leq C \|  f  �\|_{L^2(Y)},
$$
where $C$ is independent of $f$ but may depend on $n$.
\end{lemma}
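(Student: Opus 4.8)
The plan is to induct on $n$, reducing the high-order equation $L^n[U] = f$ to a chain of second-order problems and applying the coercivity of Lemma \ref{Lem:Coercivity} together with standard elliptic regularity at each stage. For the base case $n=1$, suppose $L[U] = f$ with $U, f \in C^\infty_{per}(Y) \cap \mathcal{V}$. Since $L[U] = -\nabla(a\,\nabla\cdot U)$, and since $U$ is curl-free with zero average, I would write $U = \nabla\Phi$ for a $Y$-periodic $\Phi$ (exactly as in the proof of Lemma \ref{Lem:Coercivity}), so that $\nabla\cdot U = \Delta\Phi$. Testing the equation against $U$ itself in the $\langle\cdot,\cdot\rangle$ inner product and integrating by parts gives $B[U,U] = \int_Y a(\nabla\cdot U)^2\,d\by = \langle f, U\rangle_{L^2}$ (the boundary terms vanish by periodicity). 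Coercivity from Lemma \ref{Lem:Coercivity} then yields $\|U\|^2 \leq C\,B[U,U] \leq C\,\|f\|_{L^2(Y)}\|U\|_{L^2(Y)} \leq C\,\|f\|_{L^2(Y)}\|U\|$, hence $\|U\|_{H_{div}(Y)} \leq \|U\| \leq C\|f\|_{L^2(Y)}$, which is the claim for $n=1$.

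For the inductive step, suppose the result holds for $n-1$ and that $L^n[U] = f$. Set $W := L^{n-1}[U]$, so that $L[W] = f$. To apply the base case to $W$ I must first check that $W \in \mathcal{V}$, i.e. that $W$ is curl-free and has zero mean. Since $L[\,\cdot\,] = -\nabla(a\,\nabla\cdot\,(\cdot))$ is a gradient of a scalar, every application of $L$ produces a curl-free field, so $\nabla\times W = \mathbf{0}$; and integrating $L[\psi] = -\nabla(a\,\nabla\cdot\psi)$ over the periodic cell gives zero mean for any periodic $\psi$, so $\int_Y W\,d\by = \mathbf{0}$. Smoothness of $W$ follows from that of $U$ and $a$. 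Thus the base case applies and gives $\|W\|_{H_{div}(Y)} \leq C\|f\|_{L^2(Y)}$. Now $L^{n-1}[U] = W$ with $W \in \mathcal{V}$, but the induction hypothesis is stated with an $L^2$-bound on the right-hand side; since $\|W\|_{L^2(Y)} \leq \|W\|_{H_{div}(Y)} \leq C\|f\|_{L^2(Y)}$, the hypothesis yields $\|U\|_{H_{div}(Y)} \leq C\|W\|_{L^2(Y)} \leq C\|f\|_{L^2(Y)}$, completing the induction.

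The one subtlety I would be careful about — and the step I expect to be the main obstacle — is ensuring at each level of the induction that the intermediate fields $W = L^k[U]$ genuinely land in $\mathcal{V}$ so that the coercivity estimate of Lemma \ref{Lem:Coercivity} is legitimately available; this is what forces the observation that $L$ maps into curl-free, mean-zero fields automatically, and it is why the lemma is phrased over $C^\infty_{per}(Y)\cap\mathcal{V}$ rather than a larger class. A secondary technical point is the passage from the energy estimate $\|U\| \leq C\|f\|_{L^2}$ to a genuine $H_{div}$ bound: here $\|U\|_{H_{div}(Y)}^2 = \|U\|_{L^2(Y)}^2 + \|\nabla\cdot U\|_{L^2(Y)}^2$ is exactly $\|U\|^2$ up to the harmless presence of cross terms, so the two norms are equivalent on $\mathcal{V}$ and no extra work is needed. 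Everything else — the integrations by parts, the representation $U = \nabla\Phi$, and the elliptic regularity estimate $\|\nabla\Phi\|_{L^2} \leq \|\nabla\cdot U\|_{L^2}$ — is routine and already appears in the proof of Lemma \ref{Lem:Coercivity}.
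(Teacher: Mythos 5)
Your proposal is correct and follows essentially the same route as the paper: induction on $n$, with the base case $n=1$ coming from the coercivity of Lemma \ref{Lem:Coercivity} and the inductive step reducing $L^{n}[U]=f$ via $W=L^{n-1}[U]$ and the chain $\|U\|_{H_{div}(Y)}\leq C\|W\|_{L^2(Y)}\leq C\|W\|_{H_{div}(Y)}\leq C\|f\|_{L^2(Y)}$. You in fact supply two details the paper leaves implicit — the explicit energy argument $B[U,U]=\langle f,U\rangle$ for $n=1$ and the verification that the intermediate fields $L^{k}[U]$ are curl-free with zero mean, hence in $\mathcal{V}$ — and your only slip is cosmetic: $\|U\|^2$ equals $\|U\|_{L^2(Y)}^2+\|\nabla\cdot U\|_{L^2(Y)}^2$ exactly, with no cross terms to dismiss.
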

\begin{proof} The result with $n=1$ follows from the proof of the Lemma \ref{Lem:Coercivity}. Assume that the result is true for $n-1$, i.e., if $L^{n-1}[U] = \Phi$, then $\| U \|_{H_{div}(Y)} \leq C \| \Phi \|_{L^2(Y)}$. To prove that the result holds also for $L^{n}[U] = f$, we write
$$
L^{n}[U] = L[  \Phi  ] = f, \text{ where }   \Phi =  L^{n-1}[U].
$$
Then by the assumption from $n-1$, $ \| U \|_{H_{div}(Y)}  \leq C \|  \Phi \|_{L^2(Y)}$. The final result is obtained by observing that $\|  \Phi \|_{L^2(Y)} \leq \|  \Phi \|_{H_{div}(Y)} \leq C \|   f�\|_{L^2(Y)}$.
\end{proof}

{\bf Step 4.} Here, we start by presenting a Lemma, which gives  explicit equations for the local time averages of the solution $U$ of the equation \eqref{Systemform_Eqn}.

\begin{lemma} \label{lem:kernel2}
	Assume that $a \in C^{\infty}_{per}({\mathbb{R}^{d}})$ is $Y$-periodic, positive and bounded.
	Furthermore let $f \in \left( C^{\infty}_{per}(\mathbb{R}^{d}) \right)^d$ be a $Y$-periodic function with $\overline{f}:=\int_Y f(\by) \;d\by = 0$, $K \in \mathbb{K}^{p,q}$ with an even $q$, the operator $ L[U]  := -\nabla ( a(\by) \nabla \cdot U)$ (also given in \eqref{Systemform_Eqn}), and $U \in \mathcal{V}$ the solution of the problem
	\begin{equation} \label{Eqn_Lemma_TimeAveraging}
		\left\{ \begin{aligned}
			\partial_{tt} U(t,\by) &+ L[U](t,\by)= f(\by), && \text{ in  }   \{ t > 0\} \times \mathbb{R}^d\\
			U(0,\by) &= \partial_t U(0,\by) = 0, && \text{ on } \{ t=0 \}   \times \mathbb{R}^d. \\
					\end{aligned} \right.
	\end{equation}
	Let the local time average $d_{U}$ be defined as 
	$$
                 d_U(\by) := \int_{\mathbb{R}} K_{\tau}(t) U(\frac{t}{\e}, \by) \; dt.
	$$
	Then for $0 < \e \leq \tau$, the local time average $d_U$ satisfies
	\begin{equation} \label{eq:kernel2_help}
		L[d_U](\by) = f(\by) + \left(\frac{\e}{\tau}\right)^{q+2} R(\by),
	\end{equation}
	where $R$ is $Y$-periodic with zero average, and
	\begin{equation} \label{eq:kernel2_bound}
		\| R \|_{H_{div}(Y)} \leq C  \| f \|_{L^2(Y)}.
	\end{equation}
\end{lemma}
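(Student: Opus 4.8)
The plan is to use a Laplace (or, since $U$ is even in $t$, a cosine) transform in time together with the spectral structure of the self-adjoint operator $L$ on the Hilbert space $\mathcal{V}$ from Lemma \ref{Lem:Coercivity}. First I would diagonalise: by Lemma \ref{Lem:Coercivity}, $L$ is self-adjoint, positive, and has compact resolvent on $\mathcal{V}$ (the embedding $H_{div}\cap\mathcal V \hookrightarrow (L^2)^d$ being compact for curl-free mean-zero fields, via the identification $U=\nabla\Phi$), so there is an orthonormal basis $\{\phi_k\}$ of eigenfunctions with eigenvalues $0<\lambda_1\le\lambda_2\le\cdots\to\infty$. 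Expanding $f=\sum_k \hat f_k\phi_k$ (note $f\in\mathcal V$ with zero average is assumed) and solving \eqref{Eqn_Lemma_TimeAveraging} mode by mode gives $U(t,\by)=\sum_k \hat f_k \lambda_k^{-1}\bigl(1-\cos(\sqrt{\lambda_k}\,t)\bigr)\phi_k(\by)$. Hence the time average is
$$
d_U(\by)=\sum_k \frac{\hat f_k}{\lambda_k}\Bigl(1-\widehat K\bigl(\tfrac{\e}{\tau}\sqrt{\lambda_k}\bigr)\Bigr)\phi_k(\by),
$$
where $\widehat K(\xi)=\int_{\mathbb R}K(s)\cos(\xi s)\,ds$ is the (cosine) Fourier transform of the kernel, using $\int K_\tau(t)\cos(\sqrt{\lambda_k}\,t/\e)\,dt = \widehat K(\e\sqrt{\lambda_k}/\tau)$ and $\int K_\tau = 1$.

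Next I would apply $L$: since $L\phi_k=\lambda_k\phi_k$,
$$
L[d_U](\by)=\sum_k \hat f_k\Bigl(1-\widehat K\bigl(\tfrac{\e}{\tau}\sqrt{\lambda_k}\bigr)\Bigr)\phi_k(\by) = f(\by) - \sum_k \hat f_k\,\widehat K\bigl(\tfrac{\e}{\tau}\sqrt{\lambda_k}\bigr)\phi_k(\by).
$$
So $R(\by) = -(\tau/\e)^{q+2}\sum_k \hat f_k\,\widehat K(\e\sqrt{\lambda_k}/\tau)\phi_k(\by)$, and the task reduces to the bound $|\widehat K(\xi)|\le C|\xi|^{q+2}$ uniformly in $\xi$, combined with control of the series. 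The moment conditions on $K\in\mathbb K^{p,q}$ give $\widehat K(\xi)=O(|\xi|^{p+1})$ near $\xi=0$; the smoothness $K\in C^q$ with $K^{(q+1)}\in BV$ and compact support gives decay $|\widehat K(\xi)|\le C|\xi|^{-(q+2)}$ as $|\xi|\to\infty$ (integrate by parts $q+1$ times and use the $BV$ bound on the last derivative); and since $p\ge q$ we can interpolate to get $|\widehat K(\xi)|\le C\min(|\xi|^{q+2},|\xi|^{-(q+2)})\le C|\xi|^{q+2}$ for $|\xi|\le 1$ and, more to the point, $|\widehat K(\e\sqrt{\lambda_k}/\tau)|\le C(\e/\tau)^{q+2}\lambda_k^{-(q+2)/2}\cdot(\text{bounded factor})$ after splitting the sum at $\e\sqrt{\lambda_k}\lesssim\tau$ and $\e\sqrt{\lambda_k}\gtrsim\tau$. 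This is exactly the one-dimensional mechanism behind Lemma \ref{Lem:Averaging}; here it is lifted to the operator level.

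To finish, I would bound $\|R\|_{H_{div}(Y)}$. Writing $R=(\tau/\e)^{q+2}\sum_k c_k\phi_k$ with $c_k=-\hat f_k\widehat K(\e\sqrt{\lambda_k}/\tau)$, and noting $\|\phi_k\|_{H_{div}}^2 = 1+\lambda_k$ (or is comparable to $\lambda_k$ by coercivity), one gets $\|R\|_{H_{div}(Y)}^2 \lesssim (\tau/\e)^{2(q+2)}\sum_k (1+\lambda_k)|\hat f_k|^2|\widehat K(\e\sqrt{\lambda_k}/\tau)|^2$. Using $|\widehat K(\e\sqrt{\lambda_k}/\tau)|\le C(\e/\tau)^{q+2}\lambda_k^{-(q+2)/2}$ and $q+2\ge 2$ (so that $(1+\lambda_k)\lambda_k^{-(q+2)}\le C$ for $\lambda_k$ bounded below by $\lambda_1>0$) collapses the sum to $C\sum_k|\hat f_k|^2 = C\|f\|_{L^2(Y)}^2$, giving \eqref{eq:kernel2_bound}. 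The fact that $R$ is $Y$-periodic with zero average is automatic from the expansion in the mean-zero eigenbasis $\{\phi_k\}\subset\mathcal V$. I would expect the main obstacle to be the rigorous justification of the spectral decomposition on the constrained space $\mathcal V$ — in particular that $L$ maps $\mathcal V$ into $\mathcal V$ (curl-free is preserved since $L[U]=-\nabla(a\,\nabla\cdot U)$ is a gradient, and zero-average is preserved by the computation already done in Step 3) and that its inverse is compact — and secondly the careful stationary-phase-free estimate $|\widehat K(\xi)|\le C|\xi|^{q+2}$ valid for all $\xi$, where one must genuinely use both the vanishing moments ($p\ge q$) and the $BV$ regularity of $K^{(q+1)}$; the alternative, if the global bound is awkward, is to mimic the proof of Lemma \ref{Lem:Averaging} directly by integration by parts in $t$ on each mode, which is the route I would actually take to keep the argument self-contained.
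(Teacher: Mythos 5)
Your proposal follows essentially the same route as the paper's proof: expand $U$ in the eigenbasis of the self-adjoint positive operator $L$ on $\mathcal{V}$, solve each mode explicitly as $\frac{f_j}{\lambda_j}\bigl(1-\cos(\sqrt{\lambda_j}\,t)\bigr)$, observe that time-averaging produces the factors $\int_{\mathbb{R}} K_{\tau}(t)\cos(\sqrt{\lambda_j}\,t/\e)\,dt$, and bound these by the kernel's smoothness exactly as in Lemma \ref{Lem:Averaging} applied mode by mode (which is the fallback you say you would actually take, and is what the paper does). The only minor deviation is the last step: the paper bounds $\| R \|_{H_{div}(Y)}$ by applying $L^{q/2+1}$ and invoking the regularity Lemma \ref{Lem:HighOrderRegularity} together with Parseval (this is where the evenness of $q$ enters), whereas you sum the eigenexpansion directly using $\|\varphi_j\|_{H_{div}(Y)}^2 \sim 1+\lambda_j$; both yield \eqref{eq:kernel2_bound}.
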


\begin{proof} The proof of this Lemma for scalar equations and when $L = -\nabla \cdot \left( a(\by)  \nabla \right)$ is given in \cite{Arjmand_Stohrer}. The main idea of the proof is to express the solution as an eigenfunction expansion. The proof in our setting uses precisely the same idea, but an additional regularity result is needed for time-independent systems of the form $L[W]  = f$, to be able to follow the proof in \cite{Arjmand_Stohrer}. To improve the readability and for the sake of completeness, we provide the full proof here. Since the operator $L$ is symmetric and positive, we can write the solution $U \in \mathcal{V}$ of the equation \eqref{Eqn_Lemma_TimeAveraging} in the following manner
$$
U(t,\by) = \sum_{j=1}^{\infty} u_{j}(t) \varphi_j(\by), \quad \text{ where } \varphi_j \in \left( C^{\infty}_{per}(Y) \right)^d, \text{ and } L[\varphi_j]= \lambda_j \varphi_j.
$$
By the standard theory of self-adjoint positive operators \cite{Krein_Rutman_1948}, all the eigenvalues are real and strictly positive, i.e., 
$$0 < \lambda_1 \leq \lambda_2 \leq \ldots,$$
and $\{ \varphi_j \}_{j=0}^{\infty}$ forms an orthonormal basis for $Y$-periodic functions in  $(L^2(Y))^{d}$. Plugging the expansion $U(t,\by) = \sum_{j=1}^{\infty} u_{j}(t) \varphi_j(\by)$ into the equation \eqref{Eqn_Lemma_TimeAveraging} and using the orthogonality of the eigenfunctions, we obtain
\begin{align*}
u_{j}^{\prime\prime}(t) - \lambda_j^2 u_j(t)  = f_j, \text{ where }  f(\by)  = \sum_{j=1}^{\infty}  f_j \varphi_j(\by),
\end{align*}
with homogeneous initial data $u_j(0)=u_j^{\prime}(0) = 0$. The solution of the above ODE is given explicitly as
$$
u_j(t)  = \dfrac{f_j}{\lambda_j} \left( 1 - \cos(\sqrt{\lambda_j} t) \right).
$$
Let 
$$
c_j  := \left(  \dfrac{\e}{\tau} \right)^{-q-2} \int_{\mathbb{R}} K_{\tau}(t) \cos(\dfrac{\sqrt{\lambda_j} t}{\e}) \; dt.
$$ 
Moreover, by Lemma \ref{Lem:Averaging} we get 
\begin{equation}\label{Bound:Cj}
|c_j| \leq \left(  \dfrac{\e}{\tau} \right)^{-q-2} C \left(   \dfrac{\e}{\tau \sqrt{\lambda_j}} \right)^{q+2} = C \dfrac{1}{\lambda_j^{q+2}}.
\end{equation}
Then
$$
d_U(\by)  = \sum_{j=1}^{\infty} \dfrac{f_j}{\lambda_j} \varphi_j(\by) - \left( \dfrac{\e}{\tau} \right)^{q+2} \sum_{j=1}^{\infty} \dfrac{f_j}{\lambda_j} c_j \varphi_j(\by).
$$
Hence,
\begin{align*}
L[d_{U}](\by)  &= \sum_{j=1}^{\infty} \dfrac{f_j}{\lambda_j} L[\varphi_j](\by) - \left(  \dfrac{\e}{\tau}  \right)^{q+2} L[\sum_{j=1}^{\infty}  \dfrac{f_j}{\lambda_j} c_j \varphi_j](\by)  \\
&= \sum_{j=1}^{\infty} f_j \varphi_j  + \left( \dfrac{\e}{\tau} \right)^{q+2} R(\by)  = f(\by) + \left(   \dfrac{\e}{\tau} \right)^{q+2} R(\by).
\end{align*}
Note that $d_U$ is periodic and that 
$$
R(\by) = L[ \sum_{j=1}^{\infty} \dfrac{f_j}{\lambda_j} c_j \varphi_j ](\by).
$$
We now apply the operator $L^{q/2+1}$ to $R$ and obtain
$$
L^{q/2+1}[R] = \sum_{j=1}^{\infty} f_j c_j \lambda_j^{q/2+1} \varphi_j(\by).
$$
Up to this point, the proof was precisely as in \cite{Arjmand_Stohrer}. Now, we deviate from the scheme of the proof in \cite{Arjmand_Stohrer} and instead use the regularity result given in Lemma \ref{Lem:HighOrderRegularity}, and Parseval's identity, to obtain
$$
\|  R \|_{H_{div}(Y)}^2 \leq C \|  \sum_{j=1}^{\infty}  f_j c_j \lambda_j^{q/2+1}    \varphi_j \|^2_{L^2(Y)}  = C \left|  \sum_{j=1}^{\infty} f_j^2 c_j^2 \lambda_j^{q+2} \right|.
$$
By \eqref{Bound:Cj}, it follows that
$$
\| R \|_{H_{div}(Y)}^2 \leq C \|   f \|^2_{L^2(Y)}.
$$
\end{proof}

A direct application of the Lemma \ref{lem:kernel2} to the equation \eqref{Systemform_Eqn} yields
$$
L[d_{U}](\by)  = - 2\nabla a(\by) + \alpha^{q+2}  R(\by),
$$
where $\alpha := \e/\tau$. From here, we can see that
\begin{equation} \label{Eqn_d1}
-\partial_{y_1}   \left( a(\by) \partial_{y_1} d_{\tilde{v}_1}(\by)   + a(\by) \partial_{y_2} d_{\tilde{v}_2}(\by)   +  a(\by) \partial_{y_3} d_{\tilde{v}_3}(\by)+ 2 a(\by) \right) = \alpha^{q+2} R_1(\by),
\end{equation}
\begin{equation}\label{Eqn_d2}
-\partial_{y_2}   \left( a(\by) \partial_{y_1} d_{\tilde{v}_1}(\by)   + a(\by) \partial_{y_2} d_{\tilde{v}_2}(\by) +  a(\by) \partial_{y_3} d_{\tilde{v}_3}(\by)+ 2 a(\by) \right) =\alpha^{q+2}  R_2(\by),
\end{equation}
and
\begin{equation}\label{Eqn_d3}
-\partial_{y_3}   \left( a(\by) \partial_{y_1} d_{\tilde{v}_1}(\by)   + a(\by) \partial_{y_2} d_{\tilde{v}_2}(\by) +  a(\by) \partial_{y_3} d_{\tilde{v}_3}(\by)+ 2 a(\by) \right) =\alpha^{q+2}  R_3(\by).
\end{equation}
\begin{remark} Note that by applying the Lemma  \ref{lem:kernel2} to \eqref{Systemform_Eqn}, we can bound the remainders $R_{i}$ as 
\begin{equation}\label{Ineq:R_Bound}
\| R_i \|_{L^2(Y)} \leq \| R \|_{H_{div}(Y)} \leq C_0 \|  \underbrace{2}_{\triangle \hat{u}}    \nabla a \|_{L^2(Y)} \leq C_1 \left| \triangle \hat{u} \right|.
\end{equation}
\end{remark}

{\bf Step 5.} In this step, we first rewrite the upscaled quantity ($F$ in \eqref{Intro_HMM_Flux}) in terms of $d_{\tilde{v}_i}(\by)$. Following, the precise notations used in Steps $2,3$, and $4$, we write 
\begin{align} \label{Eqn_HMM_Flux_ReFormulated}
F(\bx_0)  &= \left( \mathcal{K}_{\tau,\eta} \ast a^{\e}(\cdot) \triangle \ueeta(\cdot,\cdot) \right)(0,\bx_0) \nonumber\\
&=  \left( \mathcal{K}_{\tau,\eta} \ast a(\cdot/\e) \left(  \partial_{x_1} \partial_{x_1} \ueeta(\cdot,\cdot)   +   \partial_{x_2} \partial_{x_2} \ueeta(\cdot,\cdot) + \partial_{x_3} \partial_{x_3} \ueeta(\cdot,\cdot)  \right) \right)(0,\bx_0) \nonumber \\
&=  \left( \mathcal{K}_{\tau,\eta} \ast a(\cdot/\e) \left(  \partial_{x_1} v^{\e,\eta}_1(\cdot,\cdot)  +   \partial_{x_2}v^{\e,\eta}_2(\cdot,\cdot) + \partial_{x_3}v^{\e,\eta}_3(\cdot,\cdot)+ 2  \right) \right)(0,\bx_0) \nonumber \\
&= \left( \mathcal{K}_{\tau,\eta} \ast a(\cdot/\e) \left(  \partial_{y_1} \tilde{v}^{\e,\eta}_1(\cdot/\e,\cdot/\e)  +   \partial_{y_2}\tilde{v}^{\e,\eta}_2(\cdot/\e,\cdot/\e) + \partial_{y_3}\tilde{v}^{\e,\eta}_3(\cdot/\e,\cdot/\e)+  2  \right) \right)(0,\bx_0) \nonumber \\
&= \left( K_{\eta} \ast \left(   a(\cdot/\e) \left(  \partial_{y_1} d_{\tilde{v}_1}(\cdot/\e)  +   \partial_{y_2}d_{\tilde{v}_2}(\cdot/\e) + \partial_{y_3}d_{\tilde{v}_3}(\cdot/\e)+ 2  \right) \right) \right)(\bx_0)  \nonumber \\
&=: \int_{\Omega_{\eta,\bx_0}} K_{\eta}(\bx - \bx_0)   a(\bx/\e)  \left(  \partial_{y_1} d_{\tilde{v}_1}(\bx/\e)  +   \partial_{y_2}d_{\tilde{v}_2}(\bx/\e) + \partial_{y_3}d_{\tilde{v}_3}(\bx/\e) + 2  \right) \; d\bx
\end{align}
On the other hand, integrating \eqref{Eqn_d1} with respect to $y_1$, the equation \eqref{Eqn_d2} with respect to $y_2$, and  \eqref{Eqn_d3} with respect to $y_3$,  we obtain
\begin{align*}
a(\by) \left( \partial_{y_1} d_{\tilde{v}_1}(\by)   + \partial_{y_2} d_{\tilde{v}_2}(\by)   + \partial_{y_3} d_{\tilde{v}_3}(\by)+2  \right) &= C_1(y_2,y_3)  + \alpha^{q+2} \int_{0}^{y_1} R_1(z_1,y_2,y_3) \; dz_1 \\ &= C_2(y_1,y_3) + \alpha^{q+2} \int_{0}^{y_2}R_2(y_1,z_2,y_3) \; dz_2  \\�
&= C_3(y_1,y_2)  + \alpha^{q+2} \int_{0}^{y_3}R_3(y_1,y_2,z_3) \; dz_3.
\end{align*}
Equating the equal powers in the last equality, we readily observe that $C_1(y_2,y_3) = C_2(y_1,y_3)=C_3(y_1,y_2)$, which implies that $C_1 = C_2 =C_3 = C$; a constant independent of $\by$. The constant $C$ can be found by dividing the first equation with $a(\by)$ and integrating the resulting equation over the unit cube $Y$. This yields
\begin{eqnarray*}
\int_Y \partial_{y_1} d_{\tilde{v}_1}(\by)   +  \partial_{y_2} d_{\tilde{v}_2}(\by)   + \partial_{y_3} d_{\tilde{v}_3}(\by) +  2 \; d\by   = C  \int_Y \dfrac{1}{a(\by)} \; d\by +  
\\ + \alpha^{q+2} \int_Y \dfrac{1}{a(\by)} \int_{0}^{y_1} R_1(z_1,y_2,y_3) \; dz_1 d\by. 
\end{eqnarray*}
Multiplying both sides by $ a^0 = \left(\int_{Y} \frac{1}{a(\by)} \; d\by \right)^{-1}$, we obtain
\begin{equation*}
2 a^{0} =  C + \alpha^{q+2} a^{0}  \int_Y \dfrac{1}{a(\by)} \int_{0}^{y_1} R_1(z_1,y_2,y_3) \; dz_1 d\by.
\end{equation*}
Therefore,
\begin{eqnarray*}
a(\by) \left( \partial_{y_1} d_{\tilde{v}_1}(\by)   +  \partial_{y_2} d_{\tilde{v}_2}(\by)   + \partial_{y_3} d_{\tilde{v}_3}(\by) + 2 \right) = \\
= 2 a^0 + \alpha^{q+2} \int_{0}^{y_1} R_1(z_1,y_2,y_3) \; dz_1 - \alpha^{q+2} a^{0} \int_Y \dfrac{1}{a(\by)} \int_{0}^{y_1} R_1(z_1,y_2,y_3) \; dz_1 d\by.
\end{eqnarray*}
Let $g(\by):= a(\by) \left( \partial_{y_1} d_{\tilde{v}_1}(\by)   +  \partial_{y_2} d_{\tilde{v}_2}(\by)   + \partial_{y_3} d_{\tilde{v}_3}(\by) + 2 \right)$. Clearly $g$ is $Y$-periodic, and its average stays very close to $2 a^{0}$, i.e., 
\begin{align}
\bar{g} :=& \int_Y g(\by) \; d\by \nonumber\\
=&  2 a^0 - \alpha^{q+2} a^{0} \int_Y \left( \dfrac{1}{a(\by)}  - \dfrac{1}{a^0}\right) \int_{0}^{y_1} R_1(z_1,y_2,y_3) \; dz_1 d\by.  \label{Eqn_gbar}
\end{align}
Now, observe that the last integral in \eqref{Eqn_HMM_Flux_ReFormulated} can be written in terms of $g$ as follows: 
\begin{equation}\label{Eqn_Fg}
F(\bx_0,\Delta \hat{u}) = \int_{\Omega_{\eta,\bx_0}} K_{\eta}(\bx-\bx_0) g(\bx/\e) \; d\bx.
\end{equation}
By the equations \eqref{Eqn_gbar} and \eqref{Eqn_Fg}, and using the estimate \eqref{Ineq:R_Bound}, it follows that
\begin{align*}
\left|  F(\bx_0,\Delta \hat{u})  - \hat{F}(\bx_0,\Delta \hat{u})  \right| &:= \left|   F(\bx_0,\Delta \hat{u})  - 2 a^{0}  \right| \\
& \hspace{-2cm}=  \left|   F(\bx_0,\Delta \hat{u})  - \bar{g} + \alpha^{q+2} a^{0} \int_Y \left( \dfrac{1}{a(\by)}  - \dfrac{1}{a^0}\right) \int_{0}^{y_1} R_1(z_1,y_2,y_3) \; dz_1 d\by \right| \\
&\leq \left|   F(\bx_0,\Delta \hat{u})  - \bar{g} \right| + C_0 \alpha^{q+2}  \int_Y  \left|  R_1(\by) \right|  \; d\by   \\
&\leq \underbrace{\left|  \int_{\Omega_{\eta,\bx_0}} K_{\eta}(\bx-\bx_0) g(\bx/\e) \; d\bx  - \bar{g}   \right|}_{\leq C_1 \alpha^{q+2} \| g \|_{L^{\infty}(Y)}  \text{ by Lemma }  \ref{Lem:Averaging}} + C_0 \alpha^{q+2} \underbrace{\| R�\|_{H_{div}(Y)}}_{\leq C \left|   \triangle \hat{u} \right|, \text{ by \eqref{Ineq:R_Bound}}} \\
&\leq C_2 \alpha^{q+2}  \| R�\|_{H_{div}(Y)}  \leq C_2 \alpha^{q+2} |\triangle \hat{u}|.
\end{align*}
This completes the proof of the Theorem \ref{Thm_MultiD_F}.
\end{proof}

\section{Estimates for the full numerical solution}
\label{FullyDiscrete_Sec}
Simulations in this paper use a Leap-Frog discretization for the micro- and the macromodel. Therefore, we start this section by presenting standard results for the stability estimates for the Leap-Frog scheme, which will then be used in Subsection \ref{Subsec_ConvergenceAnalysis} to give a bound for the error between the solution of the equation free approach (EFA) and the homogenised solution.

\subsection{Difference schemes for the wave equation}
Let $\mathcal{V}_H$ be a finite dimensional Hilbert space consisting of real-valued functions defined on a mesh $\Omega_H$, given by,

$$
\Omega_{H} :=\{   \bx_I=(i_1 H, i_2 H, \ldots, i_d H), i_j = 1,\ldots, N_x-1, j=1,\ldots,d,  \text{and } (N_x  - 1) H  = L  \}.
$$
The space $\mathcal{V}_H$ is equipped with the inner product and norm
$$
\langle y,v \rangle  = \sum_{i_1 = 1}^{N_x - 1}  \ldots \sum_{i_d = 1}^{N_x - 1}  y_{i_1,\ldots,i_d} v_{i_1,\ldots,i_d} H^d, \quad \left\|   y    \right\| = \sqrt{\langle  y,y    \rangle},   \text{ where }  y,v \in \mathcal{V}_H.
$$
For an operator $B: \mathcal{V}_{H} \to \mathcal{V}_H$, with $B = B^{*}$, we define the weighted norm 
$$
\left\|   y  \right\|_{B}  := \sqrt{\langle  B y, y   \rangle}. 
$$
Let $\{ y^{n}   \}_{n \geq 1}$ be a sequence of grid functions, with $y^{n} \in \mathcal{V}_H$.  We denote the standard 
second order difference operator by
$$
D_t^{2}  y^{n} := \dfrac{y^{n+1} - 2 y^{n}  + y^{n-1}}{\triangle t^2}.
$$
We first present a result from \cite{Samarskii_Book} to study the stability of the operator equation
\begin{equation} \label{Eqn_Samarskii_Difference}
\begin{array}{lll}
E_H D_t^2  y^{n} +  L_{H} y^{n}  = \varphi^{n}, \\
y^{0}  = g, \quad y^{1}  = z,
\end{array}
\end{equation}
where $E_H$, and $L_{H}$ are two given finite-dimensional operators, and $\varphi^{n}, g, z$ belong to $\mathcal{V}_H$.

\begin{theorem} [See \cite{Samarskii_Book}] \label{Estimate_Difference_Scheme}
Assume that
$$
L_H = L_H^{*}, \quad \text{ and }\quad  E_H=E_H^{*} > \dfrac{\tau^2}{4} L_{H}. 
$$
Let $y^{n}$ solve the difference equation \eqref{Eqn_Samarskii_Difference}. Then
$$
\left\|  y^{n+1}  \right\|_{L_{H}}     \leq M \left(   \left\|  y^{0}   \right\|_{L_{H}}  + \left\|   \dfrac{y^1 - y^{0}}{\delta t}   \right\|_{E_H} + \sum_{k=1}^{n} \triangle t  \left\| \varphi^{k}   \right\|_{E_H^{-1}}   \right),
$$
where $M$ is a constant independent of $\varphi$, and $t$.
\end{theorem}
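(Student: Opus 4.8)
The statement is the classical discrete energy estimate for three-level (leap-frog) difference schemes, and the plan is a discrete energy argument. First I would test the scheme $E_H D_t^2 y^n + L_H y^n = \varphi^n$ in the $\mathcal{V}_H$-inner product against the centered difference $(y^{n+1}-y^{n-1})/(2\triangle t)$. Using the decompositions $y^{n+1}-2y^n+y^{n-1} = (y^{n+1}-y^n) - (y^n-y^{n-1})$ and $y^{n+1}-y^{n-1} = (y^{n+1}-y^n) + (y^n-y^{n-1})$ together with $E_H=E_H^*$, the $E_H$-term collapses to the difference $\|(y^{n+1}-y^n)/\triangle t\|_{E_H}^2 - \|(y^n-y^{n-1})/\triangle t\|_{E_H}^2$ of consecutive discrete kinetic energies; using $L_H=L_H^*$, the $L_H$-term telescopes to $\langle L_H y^{n+1}, y^n\rangle - \langle L_H y^n, y^{n-1}\rangle$. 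Hence, setting $\mathcal{E}^n := \|(y^{n+1}-y^n)/\triangle t\|_{E_H}^2 + \langle L_H y^{n+1}, y^n\rangle$, one obtains the discrete energy identity $\mathcal{E}^n - \mathcal{E}^{n-1} = \langle \varphi^n,\, y^{n+1}-y^{n-1}\rangle$ for $n \ge 1$.

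The decisive step is to show that $\mathcal{E}^n$ is a genuine, coercive energy. Writing $y^{n+1} = \tfrac12(y^{n+1}+y^n) + \tfrac12(y^{n+1}-y^n)$ and $y^n = \tfrac12(y^{n+1}+y^n) - \tfrac12(y^{n+1}-y^n)$ inside the pairing gives $\mathcal{E}^n = \big\|(y^{n+1}-y^n)/\triangle t\big\|_{E_H - \frac{\triangle t^2}{4}L_H}^2 + \tfrac14\|y^{n+1}+y^n\|_{L_H}^2$. This is exactly where the hypothesis $E_H = E_H^* > \tfrac{\triangle t^2}{4}L_H$ enters: since $\mathcal{V}_H$ is finite dimensional, the strict operator inequality yields $E_H - \tfrac{\triangle t^2}{4}L_H \ge (1-\theta)E_H$ for some $\theta = \theta(E_H,L_H,\triangle t) < 1$, so $\mathcal{E}^n \ge 0$, and moreover $\mathcal{E}^n$ controls both $\|(y^{n+1}-y^n)/\triangle t\|_{E_H}^2$ (up to the factor $(1-\theta)^{-1}$) and $\tfrac14\|y^{n+1}+y^n\|_{L_H}^2$. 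All dependence of the final constant $M$ on $E_H$, $L_H$, $\triangle t$ is hidden in this $\theta$.

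Next I would sum the identity from $1$ to $m$, getting $\mathcal{E}^m = \mathcal{E}^0 + \sum_{k=1}^m \langle \varphi^k,\, (y^{k+1}-y^k)+(y^k-y^{k-1})\rangle$, bound each forcing term by the duality inequality $\langle \varphi^k, w\rangle \le \|\varphi^k\|_{E_H^{-1}}\|w\|_{E_H}$ (valid since $E_H=E_H^*>0$) and by $\|(y^{j+1}-y^j)/\triangle t\|_{E_H} \le C\sqrt{\mathcal{E}^j}$ from the previous paragraph, pulling out the factor $\triangle t$ from each increment. This gives, for every $m \le n$, $\mathcal{E}^m \le \mathcal{E}^0 + C\,S^n \sum_{k=1}^n \triangle t\,\|\varphi^k\|_{E_H^{-1}}$ with $S^n := \max_{0\le j\le n}\sqrt{\mathcal{E}^j}$. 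Taking the maximum over $m \le n$ produces a quadratic inequality $(S^n)^2 \le \mathcal{E}^0 + C\,S^n\,\Phi_n$ with $\Phi_n := \sum_{k=1}^n \triangle t\,\|\varphi^k\|_{E_H^{-1}}$, whose solution (a discrete Gr\"onwall step) is $\sqrt{\mathcal{E}^n} \le \sqrt{\mathcal{E}^0} + C\,\Phi_n$.

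Finally I would translate $\sqrt{\mathcal{E}^n}$ back into the quantities in the statement. Coercivity gives $\|y^{n+1}\|_{L_H} \le \tfrac12\|y^{n+1}+y^n\|_{L_H} + \tfrac12\|y^{n+1}-y^n\|_{L_H} \le C\sqrt{\mathcal{E}^n}$, the second piece again using $\tfrac{\triangle t^2}{4}\|(y^{n+1}-y^n)/\triangle t\|_{L_H}^2 \le \|(y^{n+1}-y^n)/\triangle t\|_{E_H}^2$. Likewise, from $\mathcal{E}^0 = \|(y^1-y^0)/\triangle t\|_{E_H - \frac{\triangle t^2}{4}L_H}^2 + \tfrac14\|y^1+y^0\|_{L_H}^2$ one gets $\sqrt{\mathcal{E}^0} \le \|(y^1-y^0)/\triangle t\|_{E_H} + \tfrac12\|y^1\|_{L_H} + \tfrac12\|y^0\|_{L_H}$, and $\|y^1\|_{L_H} \le \|y^0\|_{L_H} + \|y^1-y^0\|_{L_H} \le \|y^0\|_{L_H} + 2\|(y^1-y^0)/\triangle t\|_{E_H}$, so $\sqrt{\mathcal{E}^0} \le 2\|(y^1-y^0)/\triangle t\|_{E_H} + \|y^0\|_{L_H}$; since $y^0=g$, combining these displays yields the claimed bound with $M = M(E_H,L_H,\triangle t)$. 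The one genuine obstacle is the coercivity of $\mathcal{E}^n$ in the second paragraph; once the strict operator inequality is invoked there, everything else is summation by parts, Cauchy--Schwarz, and the discrete Gr\"onwall argument.
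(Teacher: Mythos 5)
Your argument is correct and is essentially the classical energy-method proof of this estimate; the paper itself offers no proof, deferring entirely to the cited book of Samarskii, and your derivation (energy identity from pairing with $y^{n+1}-y^{n-1}$, coercivity of the discrete energy via $E_H-\frac{\triangle t^2}{4}L_H>0$, summation plus a quadratic/Gr\"onwall step) is the standard route taken there. Two small points worth making explicit: you implicitly use $L_H\geq 0$ (needed for $\left\| \cdot \right\|_{L_H}$ to be a seminorm, for $E_H>0$ and hence for $\left\| \cdot \right\|_{E_H^{-1}}$ to make sense, and for the bounds $E_H-\frac{\triangle t^2}{4}L_H\leq E_H$ and $\frac{\triangle t^2}{4}\left\| w\right\|_{L_H}^2\leq\left\| w\right\|_{E_H}^2$), and your constant $M$ degenerates as the strict inequality $E_H>\frac{\triangle t^2}{4}L_H$ becomes tight, which is consistent with the statement here (where $M$ is only claimed independent of $\varphi$ and $t$) but weaker than the textbook version assuming $E_H\geq\frac{1+\epsilon}{4}\triangle t^2 L_H$ with $M$ depending only on $\epsilon$.
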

Now we apply the Theorem \ref{Estimate_Difference_Scheme} for a finite-dimensional approximation of the wave equation

\begin{equation} \label{Eqn:Wave_Stability}
\left\{
\begin{array}{lll}
\partial_{tt} u(t,\bx)  = a(\bx) \triangle u(t,\bx) + f(t,\bx),   \quad \text{ in }  (0,T] \times \Omega, \\
u(0,\bx)  = g(\bx), \quad \partial_t u(0,\bx) = h(\bx), \\  
u(t,\bx)  = 0 \quad \text{ on }  \partial\Omega,
\end{array}
\right.
\end{equation}
where $a$ is a bounded, positive  wave speed, such that $c_1> a>c_0>0$, and $\Omega:=[0,L]^d$. The equation \eqref{Eqn:Wave_Stability} is discretised using the Leap-Frog scheme    
\begin{equation}\label{Eqn:LeapFrogScheme}
\left\{
\begin{array}{lll}
D_t^2 u^{n}_I = a_I \displaystyle \triangle_H u^{n}_I   + f_I^{n}, \text{ in }  \mathcal{T}_{\triangle t} \times  \Omega_{H}\\
u^{0}_I  = g_I, \quad u^{1}_I  = u^{0}_I + \triangle t  \,  h_I + \dfrac{\triangle t^2}{2} \left(  a \triangle_H u^{0}_I  + f^{0}_I \right), \\
u^{0}_I  = 0, \quad \text{ on } \bx_I \in \partial \Omega_{H},
\end{array}
\right.
\end{equation}
where $u^{n}_{I}$ approximates $u(t_n,\bx_{I})$ (the solution of \eqref{Eqn:Wave_Stability}), $I = (i_1,i_2,\ldots,i_d)$ is a multi-index, and the operator $\triangle_H$ is defined as
$$
\triangle_H u^{n}_{I} :=\sum_{j=1}^{d} \dfrac{u^{n}_{I + e_j}  - 2 u^{n}_{I}   + u^{n}_{I-e_j} }{H^2},
$$
where $e_j \in \mathbb{R}^d$ denotes the canonical basis vector in $j$-th direction. Moreover $\mathcal{T}_{\triangle t} $ is a discretisation of the time interval, given by
$$
\mathcal{T}_{\triangle t} :=\{   (t_n=n \triangle t), n = 0,\ldots, N_t-1, \text{ and } (N_t  - 1) \triangle t  = T  \}.
$$

\begin{corollary}\label{Cor_Stability_Leap_Frog}
Suppose that $u^{n}$ solves the difference scheme \eqref{Eqn:LeapFrogScheme}, and that the assumption 
$$
\triangle t < \dfrac{2 H}{d \sqrt{c_2}}
$$
holds, and let $\varphi = f/a$. Moreover, assume that $L_H := - \triangle_H$, and $I_H$ is the identity operator. Then, the stability estimate
$$
\left\| u^{n+1}  \right\|_{L_{H}}     \leq M \left(   \left\|  u^{0}   \right\|_{L_H}  + \left\|   \dfrac{u^1 - u^{0}}{\triangle t}   \right\|_{\frac{1}{a} I_H} + \sum_{k=1}^{n} \triangle t  \left\| \varphi^{k}   \right\|_{a I_H}   \right)
$$
holds. 
\end{corollary}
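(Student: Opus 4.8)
The plan is to observe that the Leap-Frog scheme \eqref{Eqn:LeapFrogScheme} is, after division by the wave speed, an instance of the abstract difference equation \eqref{Eqn_Samarskii_Difference}, and then to invoke Theorem \ref{Estimate_Difference_Scheme}. Dividing $D_t^2 u^n_I = a_I \triangle_H u^n_I + f^n_I$ by the positive number $a_I$ and rearranging gives
\[
\tfrac1a\, D_t^2 u^n + (-\triangle_H)\, u^n = \varphi^n, \qquad \varphi^n := f^n/a,
\]
which is exactly \eqref{Eqn_Samarskii_Difference} with $E_H = \tfrac1a I_H$, $L_H = -\triangle_H$, starting values $y^0 = u^0$ and $y^1 = u^1$ as prescribed in \eqref{Eqn:LeapFrogScheme}, and the abstract time step equal to $\triangle t$. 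Hence the corollary reduces to verifying the three structural hypotheses $L_H = L_H^{*}$, $E_H = E_H^{*}$, and $E_H > \tfrac{\triangle t^2}{4} L_H$; once these are in place, Theorem \ref{Estimate_Difference_Scheme} yields the stated estimate directly, upon noting that $\|\cdot\|_{E_H} = \|\cdot\|_{\frac1a I_H}$ and $\|\cdot\|_{E_H^{-1}} = \|\cdot\|_{a I_H}$.

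The self-adjointness is immediate: $E_H$ is multiplication by the positive grid function $1/a$, hence symmetric and positive on $\mathcal{V}_H$; and the discrete Dirichlet Laplacian $-\triangle_H$ is symmetric and positive semidefinite by the standard discrete summation-by-parts identity, which is valid here because functions in $\mathcal{V}_H$ vanish on $\partial\Omega_H$. For the operator inequality I would use the spectral bound $\langle -\triangle_H y, y\rangle \le \tfrac{4d}{H^2}\|y\|^2$ (each one-dimensional second difference contributes at most $4/H^2$, and there are $d$ of them), together with $\langle E_H y, y\rangle \ge c_2^{-1}\|y\|^2$, which follows from $a \le c_2$. The assumed restriction on $\triangle t$ then forces $\tfrac{\triangle t^2}{4}\langle -\triangle_H y, y\rangle < \langle E_H y, y\rangle$ for all $y \ne 0$, i.e. $E_H > \tfrac{\triangle t^2}{4} L_H$, which closes the argument.

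I do not expect a genuine obstacle: the corollary is essentially a transcription of the Leap-Frog scheme into the hypotheses of Theorem \ref{Estimate_Difference_Scheme}, and the only place the assumptions enter is the short spectral computation above. The one point that warrants care is pinning down the exact constant in the CFL condition — it should be whatever makes $\tfrac{\triangle t^2}{4}\,\lambda_{\max}(-\triangle_H) \le (\sup_\Omega a)^{-1}$, and one may sharpen the crude bound $\lambda_{\max}(-\triangle_H) \le 4d/H^2$ (via the exact eigenvalues of the discrete Dirichlet Laplacian) if a less restrictive time step is desired; neither this nor the choice $\varphi = f/a$ affects the structure of the proof.
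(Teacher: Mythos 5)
Your reduction is the same one the paper relies on: the corollary is stated there without a separate proof, as a direct application of Theorem \ref{Estimate_Difference_Scheme} after dividing the scheme by $a$ and taking $E_H = \tfrac{1}{a}I_H$, $L_H=-\triangle_H$, $\varphi = f/a$, and your identification of the norms $\|\cdot\|_{E_H} = \|\cdot\|_{\frac1a I_H}$, $\|\cdot\|_{E_H^{-1}} = \|\cdot\|_{aI_H}$ and the symmetry of both operators are fine. The one step that does not close as you wrote it is the bridge from the stated time-step restriction to the operator inequality $E_H > \tfrac{\triangle t^2}{4}L_H$. With your (essentially sharp) bounds $\langle -\triangle_H y,y\rangle \le \tfrac{4d}{H^2}\|y\|^2$ and $\langle \tfrac1a y, y\rangle \ge \tfrac{1}{c_2}\|y\|^2$, the hypothesis $\triangle t < \tfrac{2H}{d\sqrt{c_2}}$ only yields $\tfrac{\triangle t^2}{4}\langle -\triangle_H y,y\rangle < \tfrac{4}{d\,c_2}\|y\|^2$, which is dominated by $\langle E_H y,y\rangle$ only when $d\ge 4$; what your computation actually needs is $\triangle t \le \tfrac{H}{\sqrt{d}\sqrt{c_2}}$, which for $d=1,2,3$ (the dimensions of interest) is strictly stronger than the corollary's hypothesis. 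Sharpening the eigenvalue bound does not rescue the implication, since $\lambda_{\max}(-\triangle_H)$ approaches $4d/H^2$ on fine grids: for instance with $d=1$, $a\equiv c_2$ and $\triangle t = 1.5\,H/\sqrt{c_2}$ the stated condition holds but $E_H > \tfrac{\triangle t^2}{4}L_H$ fails, and the leap-frog scheme is indeed unstable there.

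So the gap is quantitative rather than structural, and it originates in the constant of the corollary itself: the stated bound $2H/(d\sqrt{c_2})$ would follow from $\lambda_{\max}(-\triangle_H)\le d^2/H^2$, which is false for $d<4$. In your write-up you should either replace the sentence ``the assumed restriction then forces the inequality'' by the corrected sufficient condition $\triangle t \le H/(\sqrt{d}\sqrt{c_2})$ (consistent, up to the dimensional factor, with the CFL condition $\sqrt{|A^{\varepsilon}|_{\infty}}\,\triangle t/\triangle x \le 1$ used in the paper's numerical section), or explicitly note that the corollary's hypothesis needs this tightening; with that change the rest of your argument goes through verbatim.
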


\subsection{Convergence analysis}
\label{Subsec_ConvergenceAnalysis}

The aim of this section is to give an outline for the error bound for the difference between the solution of the equation free approach (EFA) \eqref{Macro_Solver_Eqn}, and the solution of the homogenized PDE \eqref{Effective_Eqn}. For the analysis, we introduce 

\begin{equation*}
u^{n}_I := \text{ An approximation to the homogenized solution } u^{0}(t_n,\bx_I),
\end{equation*}

\begin{equation*}
U^{n}_I := \text{ Solution of the EFA \eqref{Macro_Solver_Eqn} when the micro-problem is solved exactly},
\end{equation*}

\begin{equation*}
\tilde{U}^{n}_I := \text{ Solution of the EFA  \eqref{Macro_Solver_Eqn} when the micro-problem is solved numerically}.
\end{equation*}
The discrete homogenized solution satisfies
\begin{equation} \label{Eqn:DiscreteHomogenized}
\left\{
\begin{array}{lll}
D_t^2 u^{n}_{I} = \hat{F}^{n}_{I}(\bx_I , \triangle_H u^{n}_{I})  + f^{n}_I, \\
u^{0}_{I}  = g_{I}, \quad u^{1}_I  = g_{I} + \triangle t h_{I}  + \dfrac{\triangle t^2}{2} \left(   \hat{F}^{0}_I(\bx_I , \triangle_H u^{0}_I)     + f^{0}_I  \right),
\end{array}
\right.
\end{equation}
where $\hat{F}_I(\bx_I.\triangle_H u^{n}_{I}) = a^{0}  \triangle_H u^{n}_{I} := a^{0} \displaystyle \sum_{i=1}^{d} \dfrac{u^{n}_{I + e_i}  - 2 u^{n}_{I}   + u^{n}_{I-e_i} }{H^2}$.  The semidiscrete EFA solution $U^{n}_{I}$ solves
\begin{equation} \label{Eqn:SemiDiscreteEFA}
\left\{
\begin{array}{lll}
 D_t^2 U^{n}_{I} = F^{n}_{I}(\bx_I , \triangle_H U^{n}_{I})  + f^{n}_I, \\
U^{0}_{I}  = g_{I}, \quad U^{1}_I  = g_{I} + \triangle t h_{I}  + \dfrac{\triangle t^2}{2} \left(   F^{0}_I(\bx_I , \triangle_H U^{0}_I)     + f^{0}_I  \right),
\end{array}
\right.
\end{equation}
where $F^{n}_I(\bx_I,\triangle_H U^{n}_I ) = \hat{F}^{n}_I(\bx_I,\triangle_H U^{n}_I )  + \delta_{I}^{n}(\triangle_H U^{n}_I)$, and 

$$\delta_I^{n}(\triangle_H U^{n}_I) \leq C \left( \dfrac{\e}{\eta} \right)^{q+2} \left| \triangle_H U^{n}_I \right|$$
is the upscaling error, which was estimated by the Theorem \ref{Thm_MultiD_F}. The fully discrete EFA solution $\tilde{U}^{n}_I$ satisfies

\begin{equation} \label{Eqn:FullyDiscreteEFA}
\left\{
\begin{array}{lll}
D_t^2 \tilde{U}^{n}_{I}  = \tilde{F}^{n}_{I}(\bx_I , \triangle_H \tilde{U}^{n}_{I})  + f^{n}_I, \\
\tilde{U}^{0}_{I}  = g_{I}, \quad \tilde{U}^{1}_I  = g_{I} + \triangle t h_{I}  + \dfrac{\triangle t^2}{2} \left(   \tilde{F}^{0}_I(\bx_I , \triangle_H \tilde{U}^{0}_I)     + f^{0}_I  \right).
\end{array}
\right.
\end{equation}

We assume that the equations \eqref{Eqn:DiscreteHomogenized},\eqref{Eqn:SemiDiscreteEFA}, and \eqref{Eqn:FullyDiscreteEFA} are equipped with homogeneous Dirichlet boundary conditions. We are now interested in estimating the difference between the fully discrete EFA solution $\tilde{U}^{n}_I$ and the exact homogenized solution $u^{0}(t_n, \bx_I)$. For this we write

$$
\left\|    u^{0}(t_n,\cdot)   -  \tilde{U}_I^{n} \right\|   \leq \underbrace{\left\|    u^{0}(t_n,\cdot)   -  u_I^{n} \right\|}_{E_{macro}}  + \underbrace{\left\|    u_I^{n}  -  U_I^{n} \right\|}_{E_{upscaling}}   + \underbrace{\left\|    U_I^{n}   -  \tilde{U}_I^{n} \right\|}_{E_{micro}}. 
$$

The first term in the right hand side is the discretization error in the macro level, the middle term is the upscaling error due to the two way coupling between the micro- and the macroscale quantities, and the last term is the discretization error in the microscopic simulations. We will now proceed with presenting error estimates for the upscaling and the discretization errors.

 \subsubsection{The upscaling error}

Let us define $e_I^{n} := U_I^{n} - u^{n}_I$. By definition, $e^{n}_I$ will then satisfy
\begin{equation}
\begin{array}{lll}
D_t^2 e^{n}_I  = \hat{F}^{n}_I(\bx_I, \triangle_H e^{n}_I) + \delta^{n}_{I}(\triangle_H U^{n}_{I}) \\
e^{0}_I  = 0, \quad e^{1}_{I}  = \dfrac{\triangle t^2}{2}  \delta^{0}_I(\triangle_H U^{0}_I).
\end{array}
\end{equation}
Now using the Theorem \ref{Estimate_Difference_Scheme}, with $L_H = - \triangle_H $, we obtain
\begin{equation}\label{Temp_Estimate}
\|   e^{n+1}_I \|_{L_H}   \leq M \left(  \dfrac{\triangle t}{2} \|   \delta^{0}_I(\triangle_H U^{0}_I)  \|_{a^{-1} I_H}  +    \sum_{k=1}^{N}   \triangle t   \| a^{-1} \delta^{n}_{I}(\triangle_H U^{0}_I) \|_{aI_H} \right).
\end{equation}
Next since $|\delta^{n}_{I}(\nabla_H U^{n}_I)| \leq C_{I}^{n} \left(   \frac{\e}{\eta} \right)^{q+2}   \left|  \triangle_H U^n_{I}   \right|$, an estimate
for the norm $\|  \triangle_H U^{n}_I �\|$ is needed. To do this, we first rewrite \eqref{Eqn:SemiDiscreteEFA} (with $L_{H}:= -\triangle_H$) as  

\begin{equation} \label{Eqn:SemiDiscreteEFA_Reform}
\left\{
\begin{array}{lll}
D_t^2 U^{n}_{I}  +\tilde{a}_I^{0} L_H  U^{n}_{I}  = f^{n}_I, \\
U^{0}_{I}  = g_{I}, \quad U^{1}_I  = g_{I} + \triangle t h_{I}  + \dfrac{\triangle t^2}{2} \left(   -\tilde{a}^{0}_I L_H  U^{0}_I)     + f^{0}_I  \right),
\end{array}
\right.
\end{equation}
where $ \left| a^{0}  - \tilde{a}^{0} \right|   \leq C \left( \frac{\e}{\eta} \right)^{q+2}$. We define also the operator $\tilde{L}_H := L_H^{1/2} \tilde{a}^{0}_I  L_H^{1/2}$, and apply the operator $L_H^{1/2}$ to \eqref{Eqn:SemiDiscreteEFA_Reform}. Let us denote $W^{n}_I = L_H^{1/2} U^{n}_I$, then

\begin{equation} \label{Eqn:SemiDiscreteEFA_Reform_Multiplied}
\left\{
\begin{array}{lll}
D_t^2 W^{n}_{I}  +  \tilde{L}_H  W^{n}_{I}  = L_{H}^{1/2} f^{n}_I, \\
W^{0}_{I}  = L_H^{1/2} g_{I}, \quad W^{1}_I  =L_{H}^{1/2} g_{I} + \triangle t L_H^{1/2} h_{I}  + \dfrac{\triangle t^2}{2} \left(   -\tilde{L}_H  U^{0}_I     + L_H^{1/2} f^{0}_I  \right),
\end{array}
\right.
\end{equation}

By definition of $W^{n}_I$, the Corollary \eqref{Cor_Stability_Leap_Frog}, and assuming that $\left( \frac{\e}{\eta} \right)^{q+2}$ is sufficiently small, we obtain 
\begin{align*}
\left\|   L_H U^{n}_I   \right\|   &\leq C \sqrt{  \langle  \tilde{a}^{0}_I L_H U^{n}_I  ,   L_H U^{n}_I  \rangle   } 
= \sqrt{ \langle   \tilde{L}_H   L_H^{1/2}  U^{n}_I   ,   L_H^{1/2} U^{n}_I \rangle } = \sqrt{ \langle    \tilde{L}_H W^{n}_I   ,  W^{n}_I  \rangle} \\  & \hspace{-1cm} \leq C \left(  \left\|  L_H^{1/2}  g_H \right\|_{\tilde{L}_H}     +   \left\|   L_H^{1/2} h_I + \dfrac{\triangle t}{2}  \left(  \tilde{L}_H  g_I    + L_H^{1/2} f^{0}_I \right)   \right\|_{\frac{1}{\tilde{a}^{0}}   I_H } + \sum_{k=1}^{n}  \triangle t \left\| \frac{1}{\tilde{a}^{0}} L_H^{1/2} f^{k}_I  \right\|_{\tilde{a}^{0} I_H}  \right).
\end{align*}
It follows that $ \left\|   L_H U^{n}_I   \right\|   \leq \tilde{C} $, if $g \in C^{2}(\overline{\Omega})$, and $h,f  \in C^{1}(\overline{\Omega})$. Using this result in the inequality \eqref{Temp_Estimate}, we obtain
$$
\left\| e^{n+1}_I \right\|  \leq C_T   \left(  \dfrac{\e}{\eta} \right)^{q+2}.
$$
\subsubsection{The macro and the micro errors}
Let $z_I^{n}:= u^{0}(t_n,\bx_I) - u^{n}_{I}$. Applying the operators $D_t^2$ and $\triangle_H$ to $z_I^{n}$, and using the fact that 
$$
\partial_{tt} u^{0}(t_n,\bx_I) = D_t^2 u^{0}(t_n,\bx_I) + C_{1,I}^{n} \triangle t^2, \text{ and }  \triangle u^{0}(t_n,\bx_I)  = \triangle_H u^{0}(t_n,\bx_I)  + C_{2,I}^{n} H^2,
$$
where $| C_{1,I}^{n} |\leq  \displaystyle \sup_{t \in [0,T],\bx \in \Omega} \partial_{ttt} u^{0}(t,\bx)$, and $| C_{2,I}^{n} |\leq  \displaystyle \sup_{t \in [0,T],\bx \in \Omega} \partial_{x_i x_j x_k} u^{0}(t,\bx)$, we obtain
\begin{equation}
\begin{array}{ll}
D_t^2 z_{I}^{n}  = a^{0}  \triangle_H z^{n}_{I} + C_{I}^{n} \left(    \triangle t^{2}  + H^2  \right), \\
z^{0}_{I}  = 0, \quad z^{1}_{I}  = \dfrac{\triangle t^2}{2} C_{I}^{0} H^2.
\end{array}
\end{equation}
Here $|C^n_{I} |  \leq \max \{|C_{1,I}^{n}|,|C_{2,I}^{n}|\}$. A direct application of the Corollary \ref{Cor_Stability_Leap_Frog} gives 
$$
\| z^{n}_I \|:= \|   u^{0}(t_n,\bx_I)    -    u^{n}_{I} \| \leq C \left(   \triangle t^2 +  H^2 \right).
$$
For the micro error, we argue similarly and find that $\left\|    U_I^{n}   -  \tilde{U}_I^{n} \right\|   \leq C \left(   \delta t^2 + \delta x^2 \right)$. Note that the stability estimates in this section are established for the Leap frog scheme; which is an explicit method. For stability estimates for other types of numerical schemes, we refer the reader to standards books on finite differences, such as \cite{Gustafsson_Kreiss_Oliger,Samarskii_Book}. For stability estimates for higher order implicit methods for the second order hyperbolic equations, see also \cite{Ashyralyev_Sobolevskii_Book}. 
\section{Numerical results}
We subdivide this section into three parts. First, in subsection \ref{Sec:UpscalingError}, the upscaling error from Theorem \ref{Thm_MultiD_F} is illustrated.  We then, in subsection \ref{Sec:OneDSoln}, compare the solution of the equation free approach in one-dimensional periodic and \emph{almost-periodic} media. Finally, in subsection \ref{Sec:TwoDSoln}, we compare our numerical solutions for a two-dimensional periodic setting.

\subsection{Upscaling error}
\label{Sec:UpscalingError}
\begin{figure}[ht] \label{Fig:UpscalingError}
	\centering
	\includegraphics[width=0.49\textwidth]{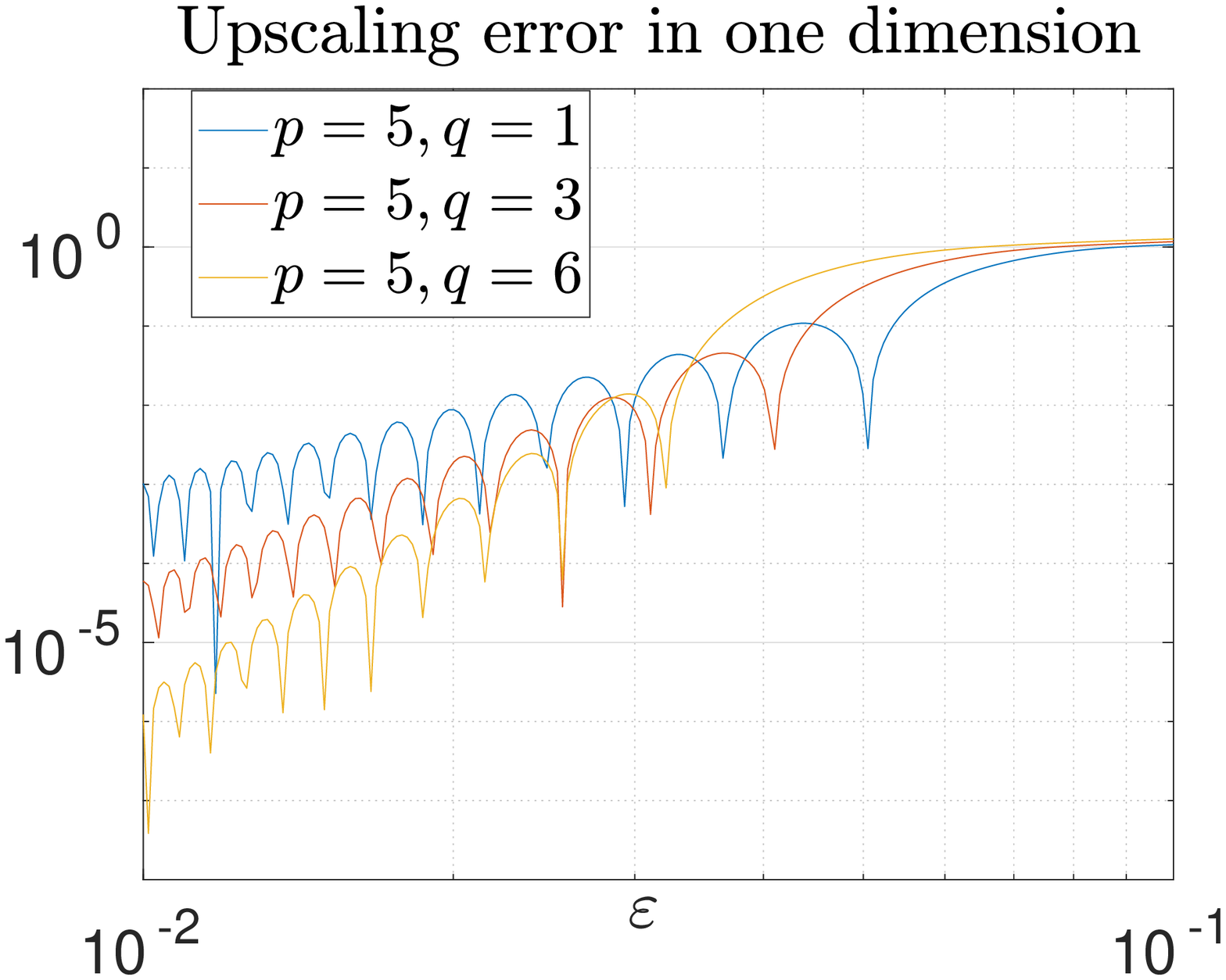}
		\includegraphics[width=0.49\textwidth]{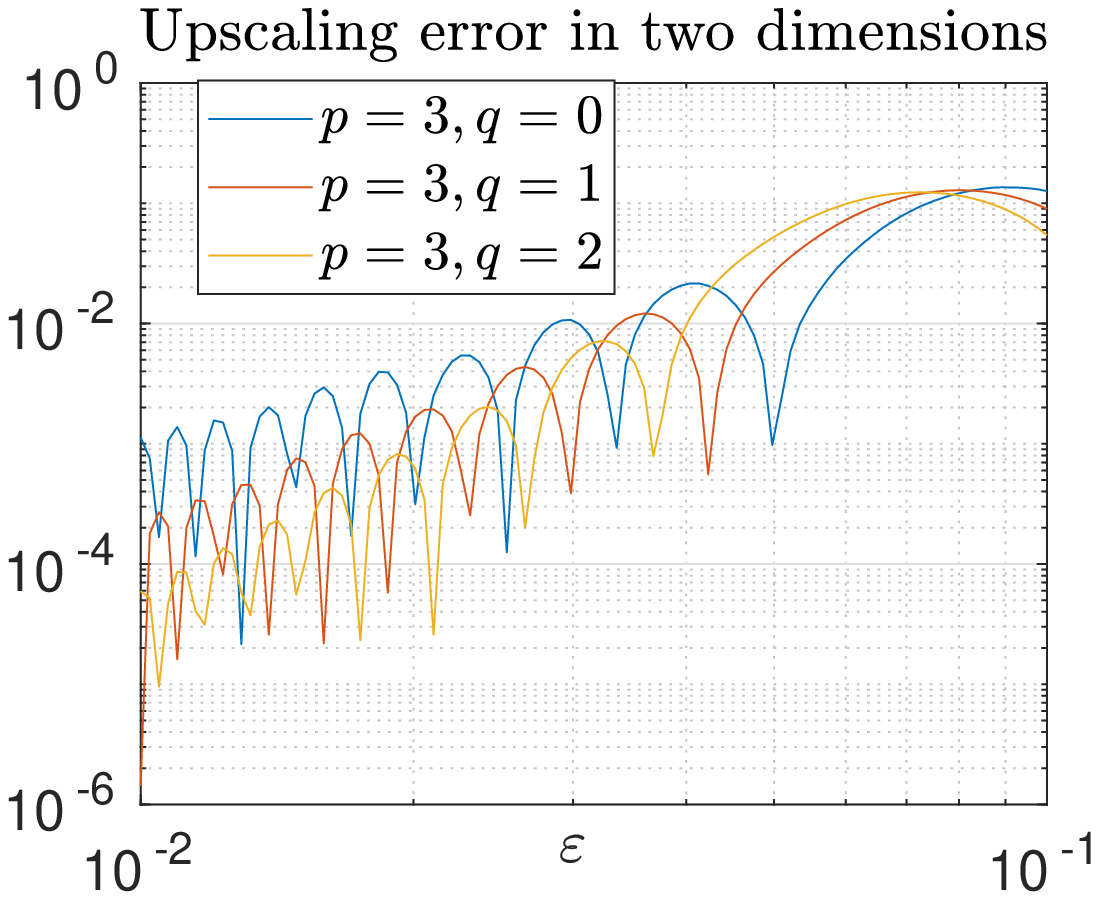}
	\caption[Upscaling errors]
	{Upscaling errors in one and two dimensional periodic media. }
\end{figure}

Here, the upscaling error in Theorem \ref{Thm_MultiD_F} is tested for periodic problems in one and two dimensions. In one-dimension, we consider the micro problem \eqref{Intro_Micro_Problem}, with an initial data $\hat{u}(x) = x^2$. The coefficient $A^{\e}(x)$ is taken to be
$$
A^{\e}(x)  = 1.1  + \sin(2\pi x/\e).
$$
In this case, the exact homogenized coefficient reads as
$$
A^{0}  = \left(  \int_0^{1}   \left(  1.1  + \sin(2 \pi y)  \right)^{-1}    \; dy  \right)^{-1} = \sqrt{1.1^2  - 1},
$$ 
and therefore, the exact upscaled quantity becomes $\hat{F}  := A^{0} \partial_{xx} \hat{u}  = 2 \sqrt{1.1^2  - 1}$. The left plot in the Figure \ref{Fig:UpscalingError}  shows the upscaling error, i.e., $| F  - \hat{F}|$, where $F$ (see \eqref{Intro_HMM_Flux}) is the upscaled quantity in the equation free approach. In this simulation, the size of the averaging box is chosen to be $\eta = \tau  = 0.1$, and the upscaling error is plotted against $\e$, for averaging kernels with different regularities. Higher values for $q$ implies better regularity properties for the kernel, and the figure shows the precise convergence rate $O( (\e/\eta)^{q+2} )$, which verifies the result of the Theorem \ref{Thm_MultiD_F}. In the right plot of the Figure \ref{Fig:UpscalingError}, we consider the micro-problem \eqref{Intro_Micro_Problem} with a two-dimensional  material coefficient
\begin{equation}\label{NumRes:2D_Coefficient}
A^{\e}(\bx)  = \left( 1.1  + \cos(2\pi x_1/\e) \sin(2\pi x_2/\e) + e^{\cos(2\pi x_1/\e) + \sin(2\pi x_2/\e)} \right)^{-1} I,
\end{equation}
where $I$ is the $2\times 2$ identity matrix.
The micro-problem \eqref{Intro_Micro_Problem} is equipped with the initial data $\hat{u}(\bx) =  x_1^2$. In this case, the exact homogenized coefficient is approximated by $10$ digits of accuracy as follows 
$$
A^{0}  = 0.3699698702 I.
$$

Therefore, the exact upscaled quantity becomes $\hat{F} = A^{0} \partial_{x_1 x_1} \hat{u} = 2 A^{0}$. Moreover, we have used $\tau = \eta = 0.1$ in the simulations. Similar to the one-dimensional case, precise convergence rates of the Theorem \eqref{Thm_MultiD_F} are observed in the simulations.

\subsection{Solution in one dimension}
\label{Sec:OneDSoln}

\begin{figure}[h] 
	\centering
	\includegraphics[width=0.49\textwidth]{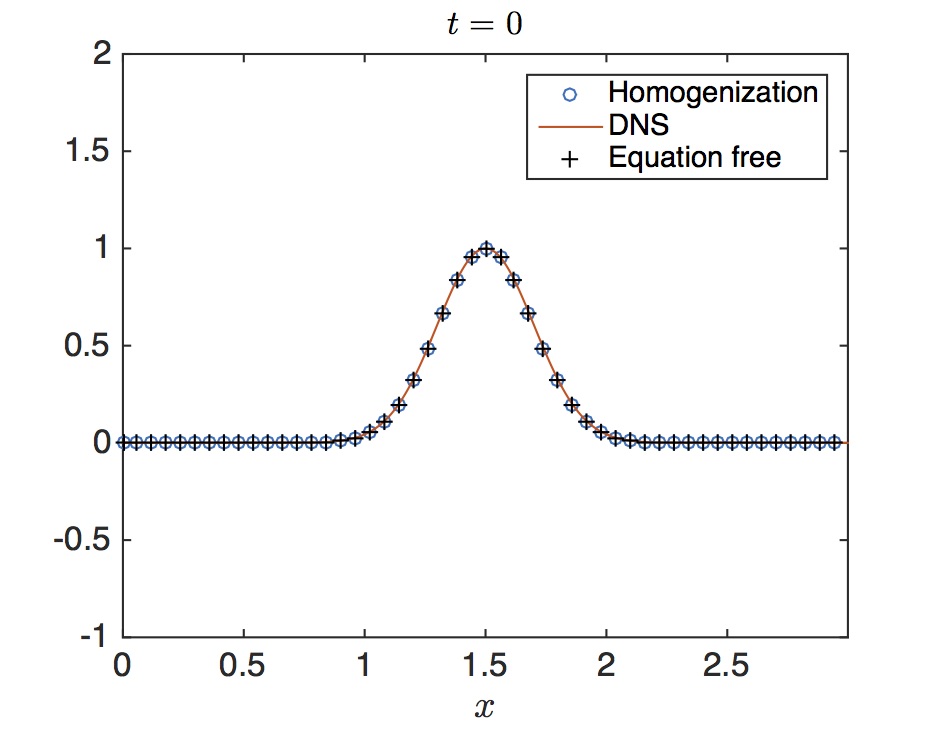}
		\includegraphics[width=0.49\textwidth]{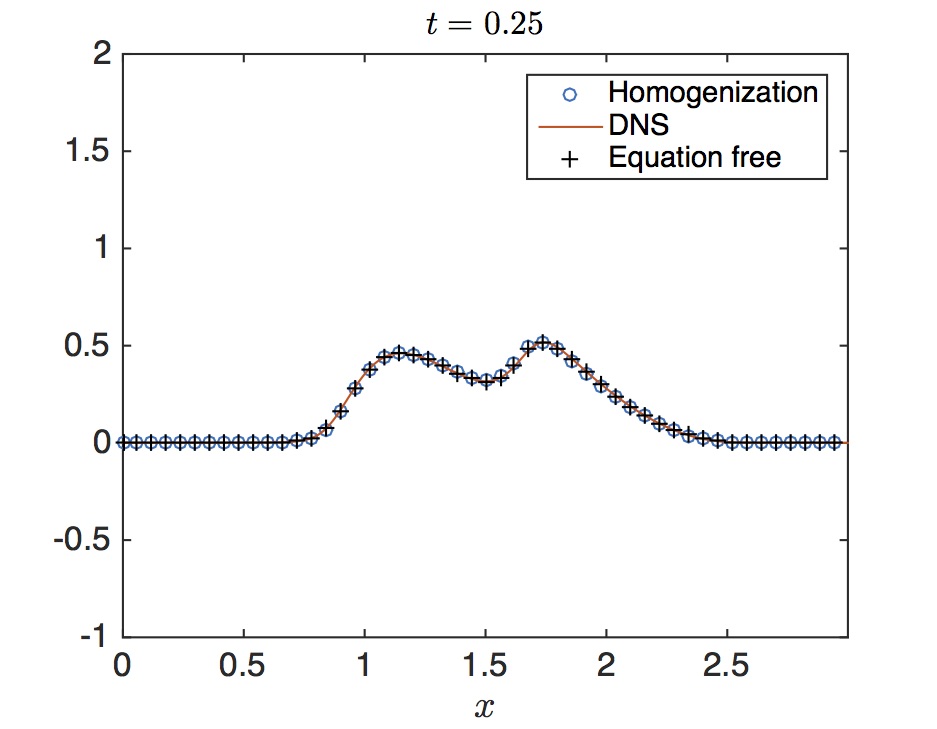}
	\includegraphics[width=0.49\textwidth]{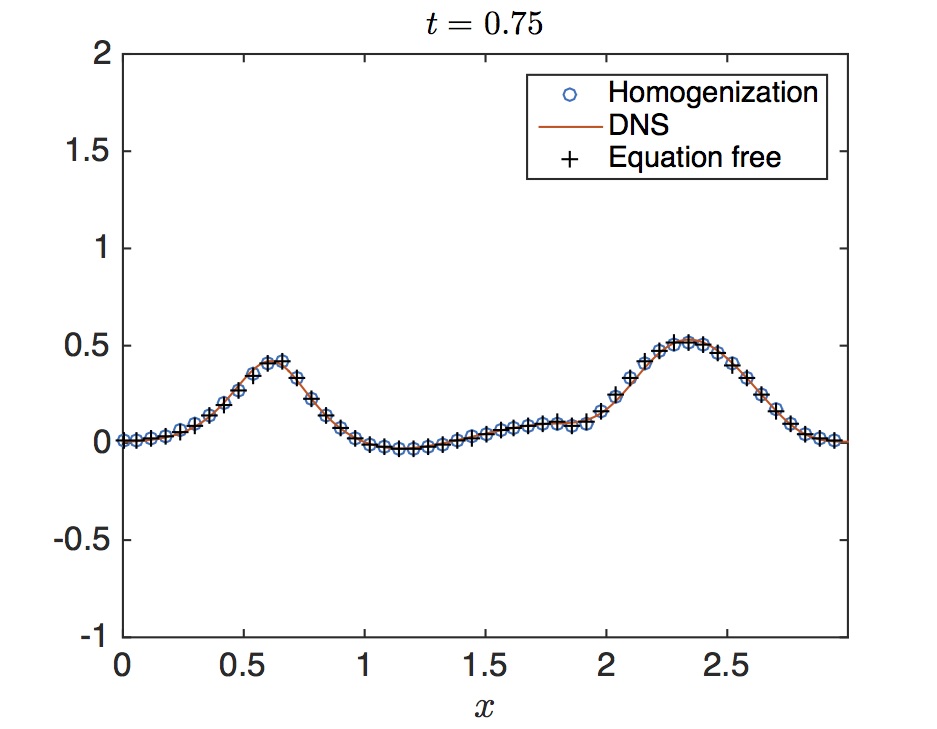}
         \includegraphics[width=0.49\textwidth]{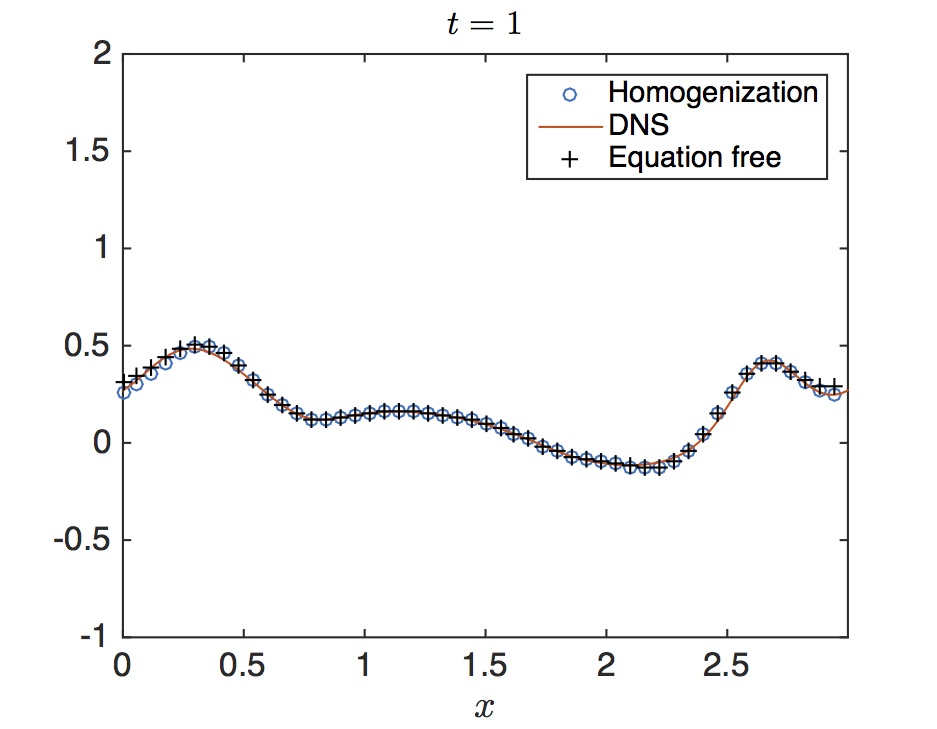}
	\caption[1D Periodic Media Solution]
	{The solution of the equation free approach (in a one-dimensional locally-periodic media) is compared with that of the homogenized and the direct numerical simulation. }
	\label{Fig:SolnPer1D}
\end{figure}

The theoretical results, in this paper, are valid  only for periodic coefficients, but the numerical algorithm is developed to treat more general coefficients. Here we give two one-dimensional examples where the wave speed is not periodic. In the Figure \ref{Fig:SolnPer1D}, we compare the solution of our multiscale algorithm with a direct numerical simulation, as well as  the homogenized solution. The initial condition for the wave equation is a Gaussian centred at the point $x = 1.5$, with a standard deviation $\sigma  = 0.08$. The coefficient function is locally-periodic and chosen as 
\begin{equation}
\label{Eqn_1d_LocPer_Coeff}
A^{\e}(x) = \left( 1.5  + \sin(2 \pi x) \right)\left( 1.5  + \sin(2 \pi x/\e) \right), \quad \text{ where }   \e  = 0.01.
\end{equation}
For a direct numerical simulation (DNS), the problem \eqref{Main_MultiscaleWave_Eqn} is solved over the space-time domain $(0,T] \times \Omega$, where $\Omega  = [0,3]$ and $T=1$, with periodic boundary conditions. The DNS uses $10$ points per wavelength (meaning $3000$ degrees of freedom in space), and the time-step is obtained by the CFL condition  $\frac{\sqrt{|A^{\e}|_{\infty}}  \delta t  }{\delta x} \leq 1$.  The equation free approach, however, uses only $50$ points in space (under-resolving the small scale variations) on the macro level, and a macroscopic time step $\triangle t = 0.01$. The parameters in the simulation of the equation free approach are chosen as $\eta = \tau  = 0.1$, and a kernel $K$ with $p=3$, and $q=5$ is used in the simulation. The homogenized solution, shown in Figure \ref{Fig:SolnPer1D}, uses the same discretization parameters as the macro solver in the equation free approach. The figure shows that the equation free approach captures the coarse scale part of the exact solution without resolving the $\e$-scale variations in the coefficient.

In Figure \ref{Fig:SolnNonPer1D}, we consider an example of yet another one-dimensional non-periodic media (known as almost-periodic media in the literature), modelled by the coefficient 
\begin{equation}
\label{Eqn_1d_AlmostPer_Coeff}
A^{\e}(x)  = \dfrac{1}{4} e^{\sin(2 \pi  \sqrt{2} x/\e)     +  \sin(2 \pi x/\e)}.
\end{equation} 
The equation free approach and the direct numerical simulations use precisely the same numerical parameters as in Figure \ref{Fig:SolnPer1D}, and the multiscale approach is again observed to accurately capture the coarse scale variations. In Figure \ref{Fig:1DSolnError}, the convergence (as $\e \to 0$) of the EFA to the homogenized solution for the locally periodic coefficient \eqref{Eqn_1d_LocPer_Coeff} and the almost periodic coefficient \eqref{Eqn_1d_AlmostPer_Coeff} is studied. Different $q,p$ pairings are used in the simulations and $\eta = 0.1$ for both simulations. As predicted by the theory, the convergence is of order $(\e/\eta)^{q+2}$ for the almost periodic case. For the locally periodic case, which is not covered by the theory in this paper, the same convergence rate is obtained by choosing higher values for the parameter $p$. Note that to be able to compute reference homogenized solution for the almost periodic case, we have approximated the irrational number $\sqrt{2}$ in the coefficient \eqref{Eqn_1d_AlmostPer_Coeff} by $1.41$, which makes the coefficient $100$-periodic (when $\e = 1$). We then compute the corresponding homogenized coefficient by 
$$
\left( \dfrac{1}{100} \int_{0}^{100} A^{1}(x)^{-1} \; d\bx \right)^{-1}  \approx 1.302004095265470
$$

\begin{figure}[h] 
	\centering
	\includegraphics[width=0.49\textwidth]{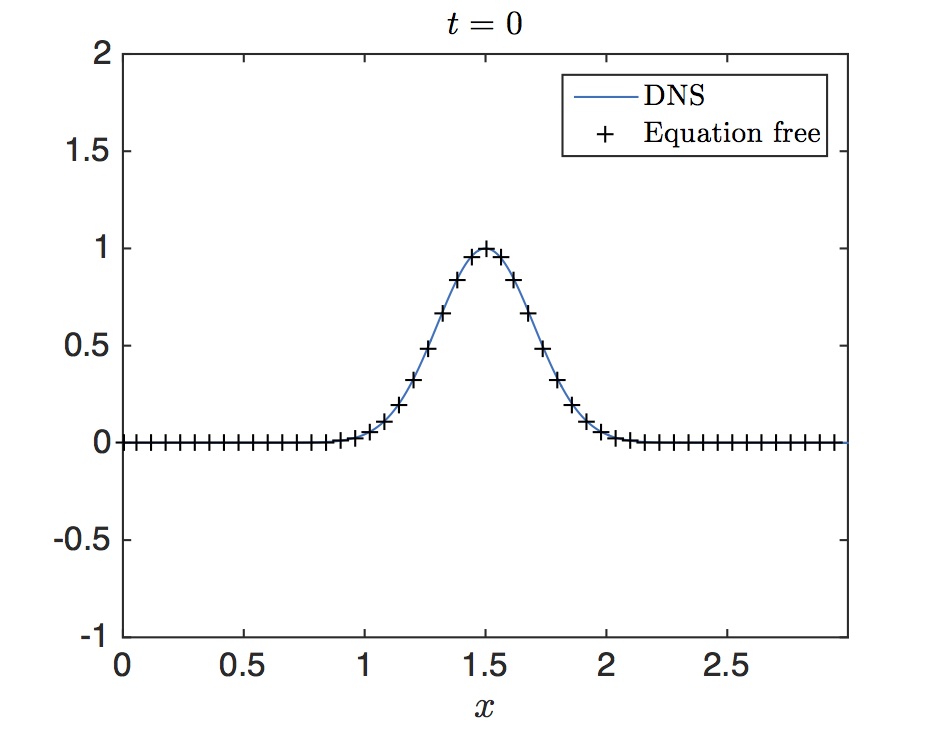}
		\includegraphics[width=0.49\textwidth]{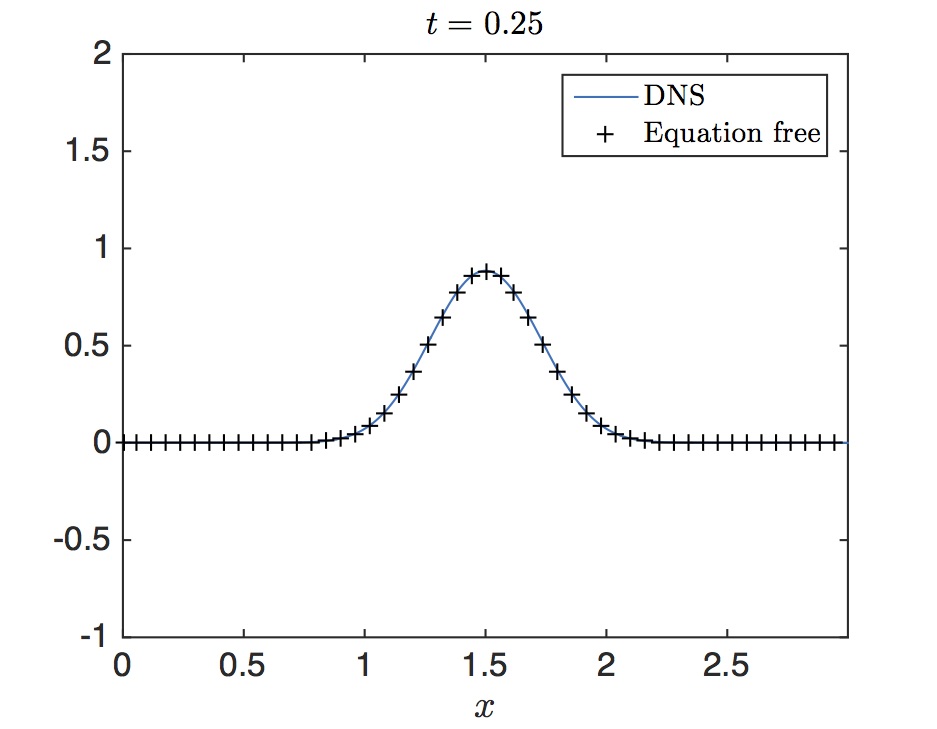}
	\includegraphics[width=0.49\textwidth]{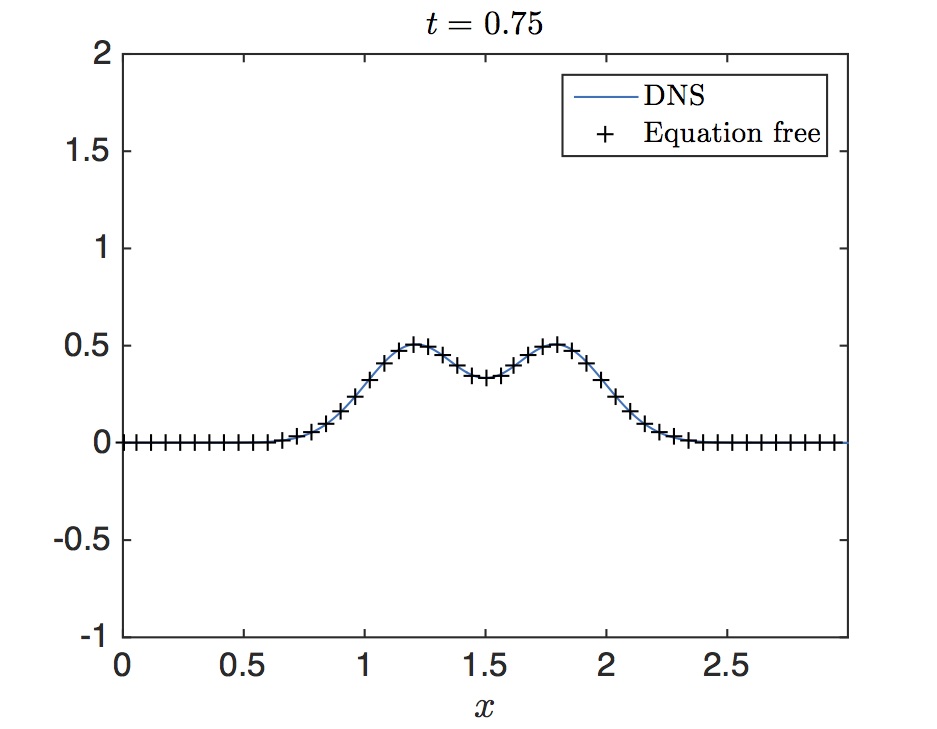}
         \includegraphics[width=0.49\textwidth]{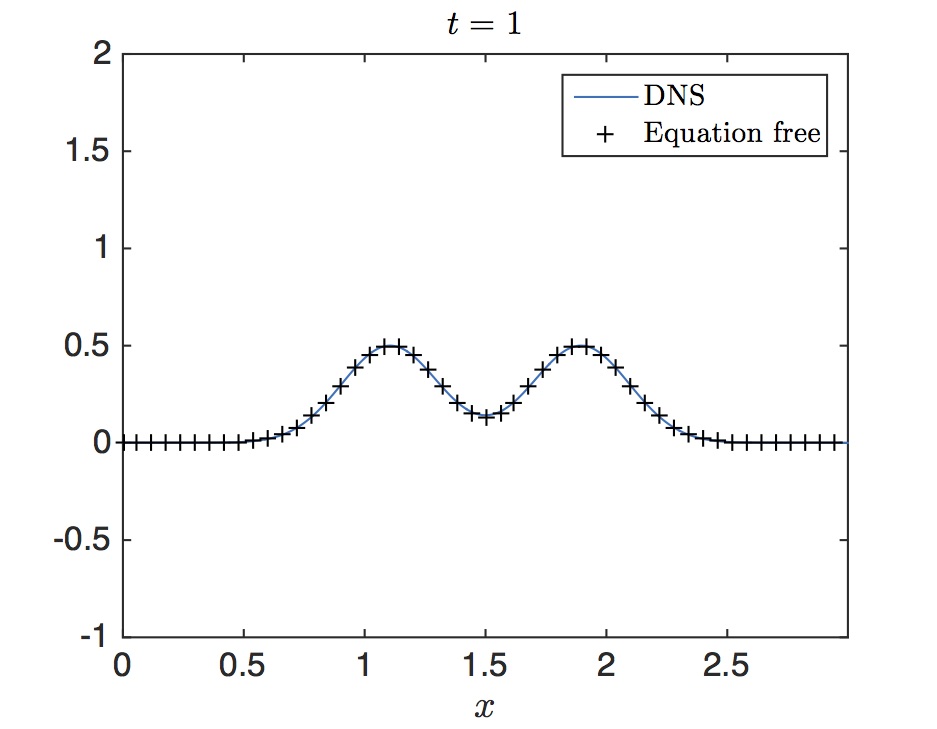}
	\caption[1D Non-periodic Media Solution]
	{The solution of the equation free approach (in a one-dimensional almost-periodic media) is compared with a direct numerical simulation. }  \label{Fig:SolnNonPer1D}
\end{figure}

\begin{figure}[ht] 
	\centering
	\includegraphics[width=0.49\textwidth]{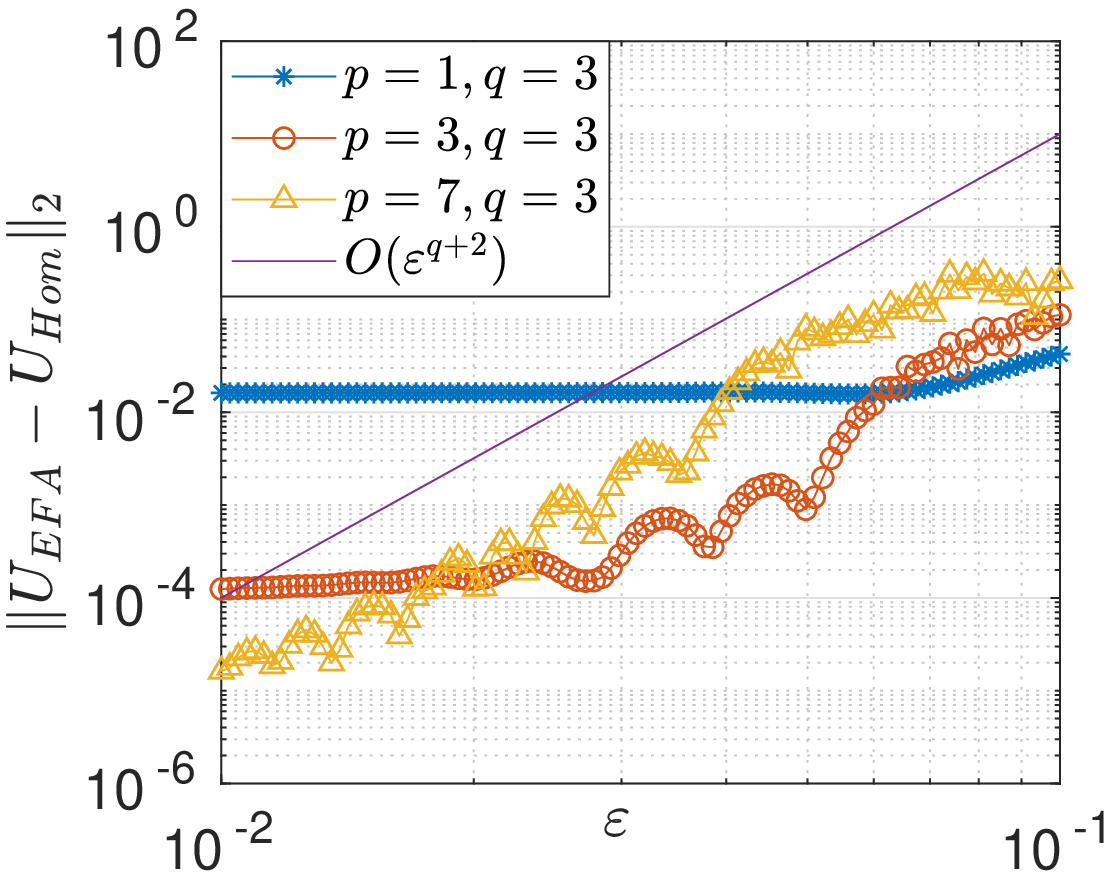}
		\includegraphics[width=0.49\textwidth]{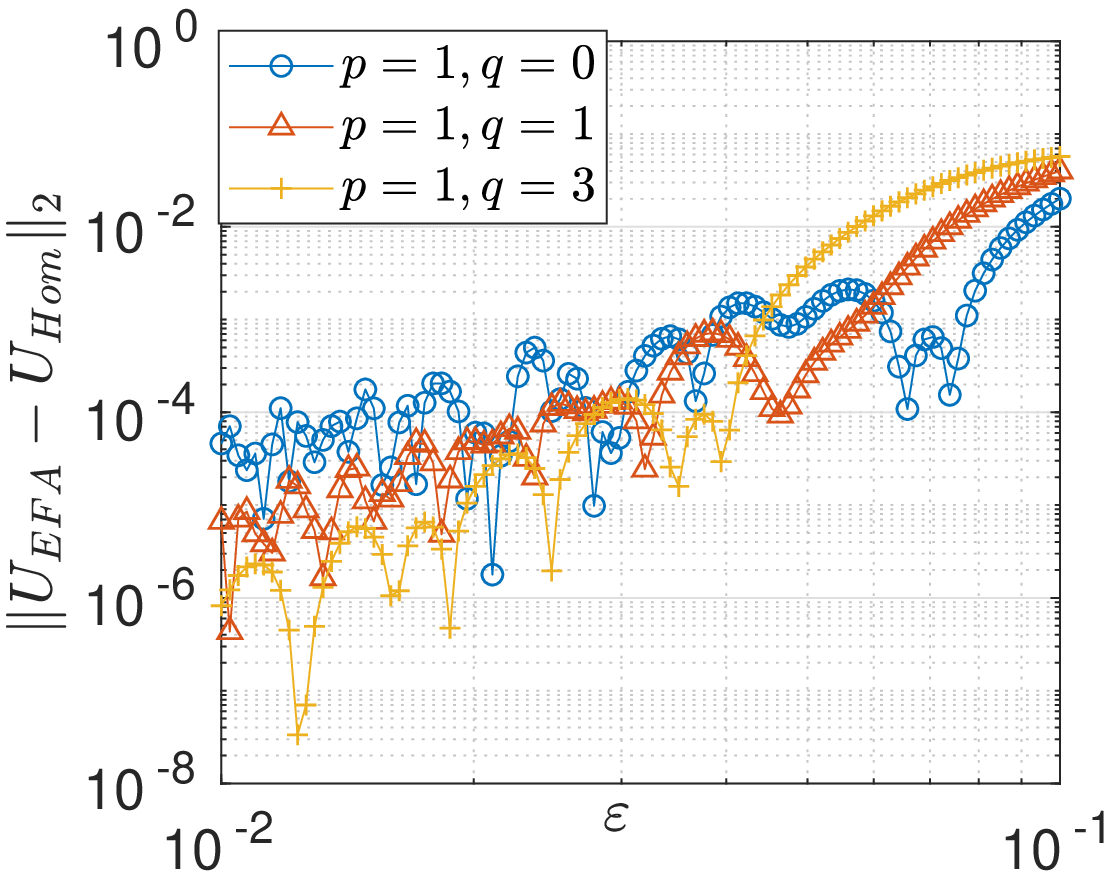}
	\caption[Error in solution]
	{The convergence (as $\e \to 0$) of the EFA to the homogenized solution for (left) the locally periodic coefficient \eqref{Eqn_1d_LocPer_Coeff} and (right) the almost periodic coefficient \eqref{Eqn_1d_AlmostPer_Coeff}. }
	\label{Fig:1DSolnError}
\end{figure}

\subsection{Solution in two dimensions}
\label{Sec:TwoDSoln}
To show the validity of the multiscale method in higher dimensions, we consider here a two-dimensional medium, where the coefficient function is non-periodic such that

\begin{equation} \label{2DCoeff_Anisotropic}
A^{\e}(\bx) = \dfrac{1}{3} \left( \dfrac{3}{2}  + \sin(2 \pi x_1/\e)\right) \left( \dfrac{3}{2}  + \dfrac{1}{2} \left(  \cos(2 \pi \sqrt{2}  x_1/\e)   + \cos(2 \pi x_2 /\e)  \right)  \right) D,
\end{equation}
where 
\begin{equation*}
D = 
\begin{bmatrix}
    1       & c \\
    c       & 1 \\
\end{bmatrix}.
\end{equation*}

Two cases (with different values for the constant $c$) are considered. In Figures \ref{Fig:SolnNonPer2D_Isotropic} and \ref{Fig:SolnNonPer2D_Anisotropic}, we consider an isotropic and an anisotropic material, which are modelled by $c=0$ and $c=1/2$ respectively. The solution of the equation free approach is compared to a direct numerical simulation $u^{\e}$, as well as a local average of $u^{\e}$ defined by $\left( \mathcal{K} u^{\e} \right)(t,\bx)$, see the Section \ref{HMM_Sec} for the definition of $\mathcal{K}$. The wave equation is solved over a time-space domain $[0,T] \times \Omega$, with $T=0.5$, and $\Omega = [0,1]^2$, and the small scale parameter is set to $\e = 0.025$. Periodic boundary conditions are used (on a macroscopic scale), and the initial data are assumed to be 
\begin{equation}\label{Eqn_InitialData2D}
u^{\e}(0,\bx) = \sin(2 \pi x_1) \cos(2 \pi x_2), \quad \partial_t u^{\e}(0,\bx)  = 1.
\end{equation}

The equation free approach uses $60 \times 60$ macroscopic points in space (under-resolving the small scale variations). Moreover, the parameter values $\eta = \tau  = 0.25$, $p= 5$, and $q=7$ are used for the simulation of the microscopic problem as well as the local averaging in the upscaling step. The DNS uses $10$ points per wavelength (meaning $400 \times 400$ points in space). Moreover, for both of the solvers (the equation free solver and the full multiscale problem) the time step is set such that the CFL condition $\sqrt{|A^{\e}|_{\infty}} \triangle t/\triangle x \leq 1$ holds with the largest possible time-step. The choice of the coefficient \eqref{2DCoeff_Anisotropic} is to test our multiscale algorithm for cases, where the theoretical assumptions of this paper do not hold. In other words, the theory in this work is based on the fact that the coefficient function is diagonal and periodic, but the simulations in this section include both non-periodic and non-diagonal examples. Moreover, the numerical simulations, depicted in Figures \ref{Fig:SolnNonPer2D_Isotropic} and \ref{Fig:SolnNonPer2D_Anisotropic}, show that the equation free approach captures the coarse features of the full multiscale solution even when the theoretical assumptions are not totally fulfilled. 

In Figure \ref{Fig:2DSolnError_c0_AlmostPer}, the convergence of the EFA approach to the homogenized solution for the two dimensional material coefficient \eqref{2DCoeff_Anisotropic} with $c=0$ is studied. Similar to the one dimensional case, different $(p,q)$ pairings are used and $\eta = 0.1$ is chosen in the simulation, and higher convergence rates are observed by taking higher values for $q$. To be able to compute the reference homogenized solution for the simulations in \ref{Fig:2DSolnError_c0_AlmostPer}, the irrational number $\sqrt{2}$ is approximated by $1.41$, which periodize the material coefficient with a period equal to $100$ in the $x_1$-variable (when $\e=1$). The homogenized coefficient is then computed by 
$$
A^{0} = \left( \dfrac{1}{100} \int_{0}^{100} \int_{0}^{1} A^{1}(\bx)^{-1} \; dx_2 dx_1 \right)^{-1}  \approx    0.485228277332784 \hspace{0.05cm} I.$$
Note that under this setting, the micro problems are still non-periodic, although the material coefficient is periodic on a large scale.

\begin{figure}[ht] 
	\centering
	\includegraphics[width=0.49\textwidth]{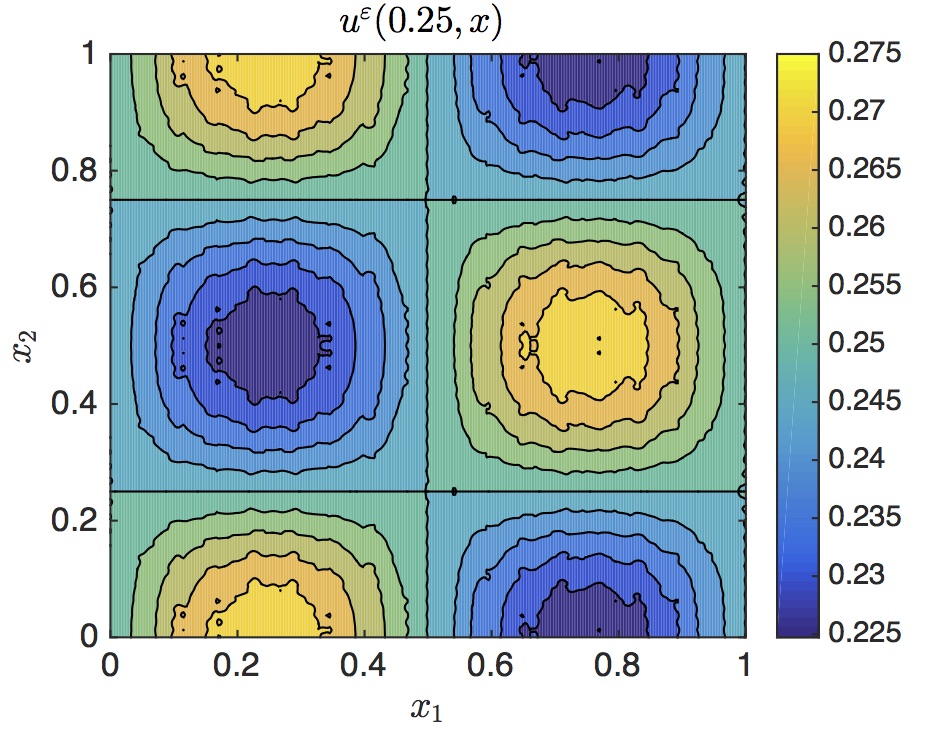}
		\includegraphics[width=0.49\textwidth]{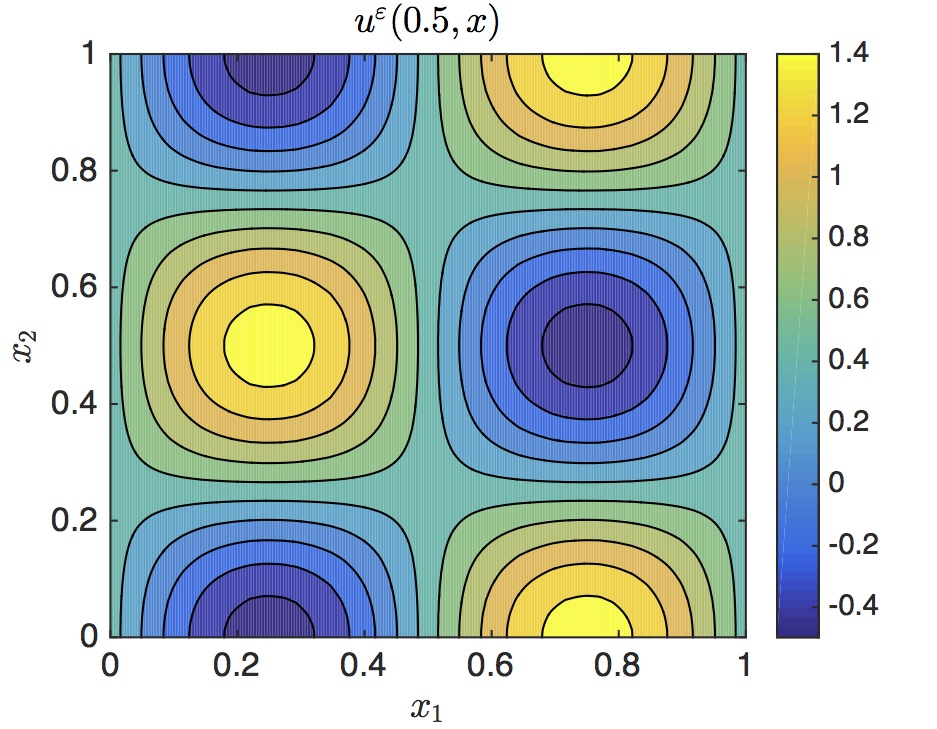}
	\includegraphics[width=0.49\textwidth]{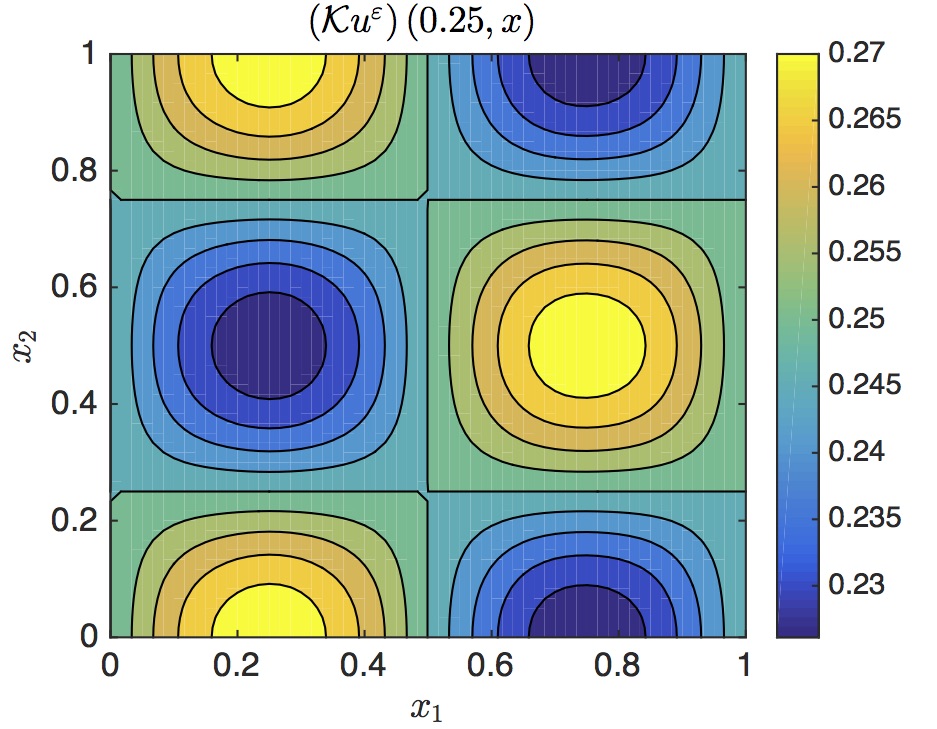}
         \includegraphics[width=0.49\textwidth]{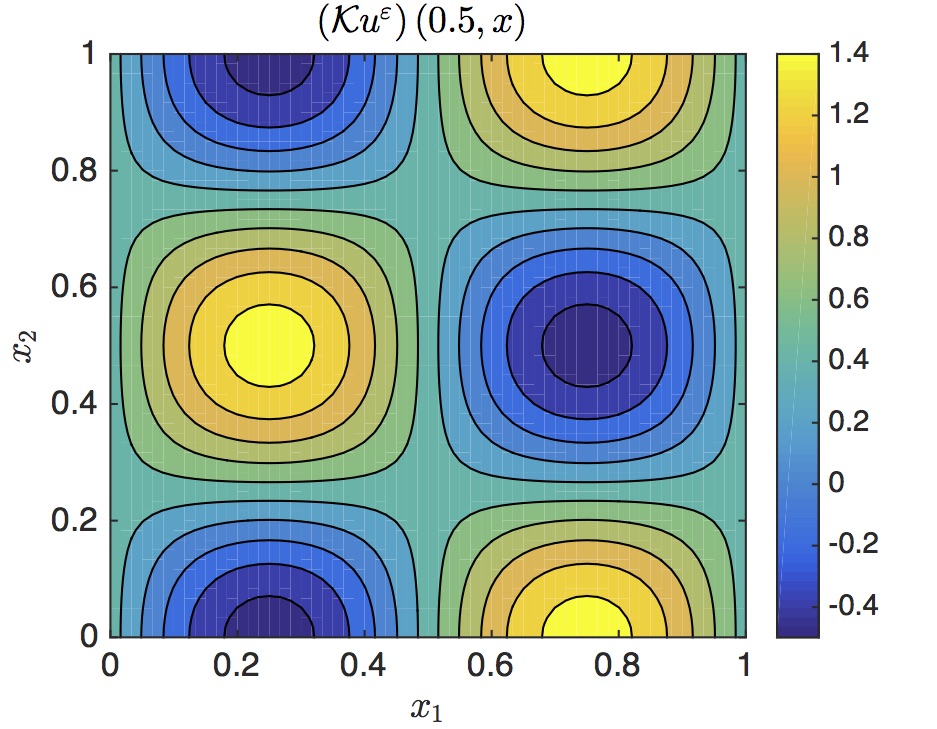}
         \includegraphics[width=0.49\textwidth]{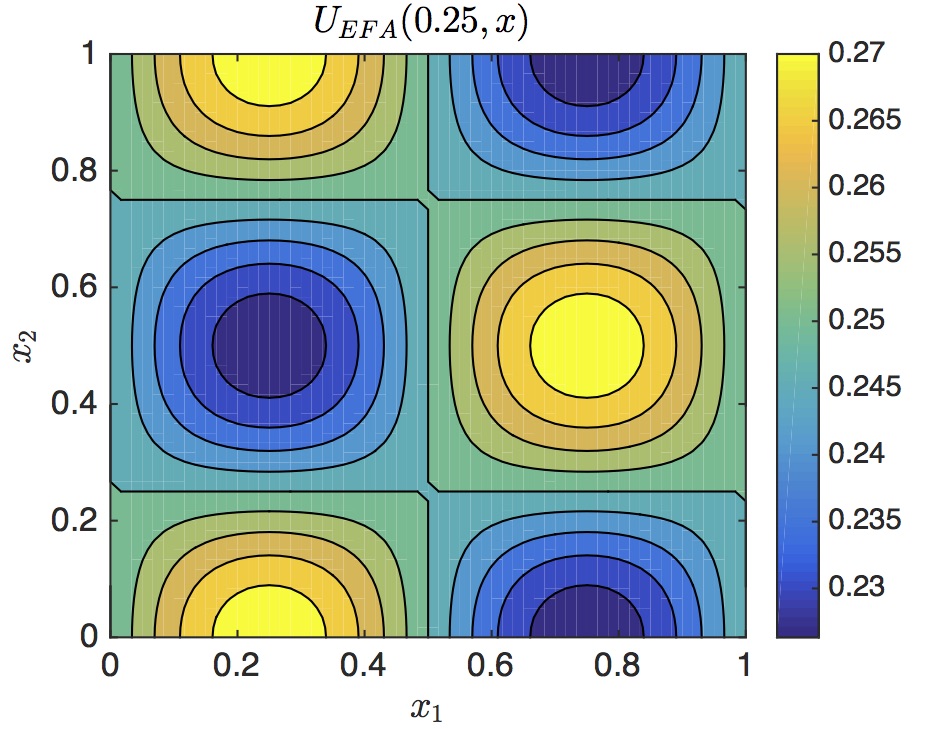}
         \includegraphics[width=0.49\textwidth]{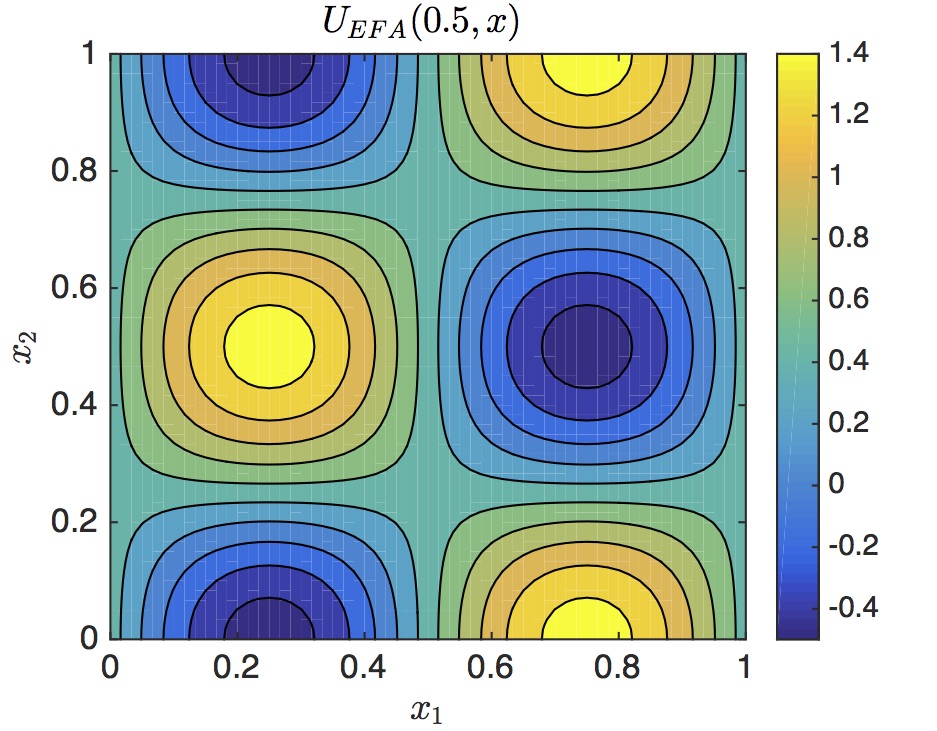}
	\caption[1D Non-periodic Media Solution]
	{Simulation of a non-periodic and isotropic material (when $c =0$ in \eqref{2DCoeff_Anisotropic}). (Top row) A direct numerical solution at times $t=0.25$ and $t = 0.5$. (Middle row) Local average $\left( \mathcal{K} u^{\varepsilon} \right)(t,\bx) $ at times $t=0.25$ and $t=0.5$. (Bottom row) The solution of the equation free approach at times $t=0.25$ and $t = 0.5$. Note that the small scale oscillations for the solution of the DNS (top row) at $t=0.25$ can be identified on the picture, but the solution of the equation free approach captures only the large scale behaviour.}
	\label{Fig:SolnNonPer2D_Isotropic}
\end{figure}

\begin{figure}[ht] 
	\centering
	\includegraphics[width=0.49\textwidth]{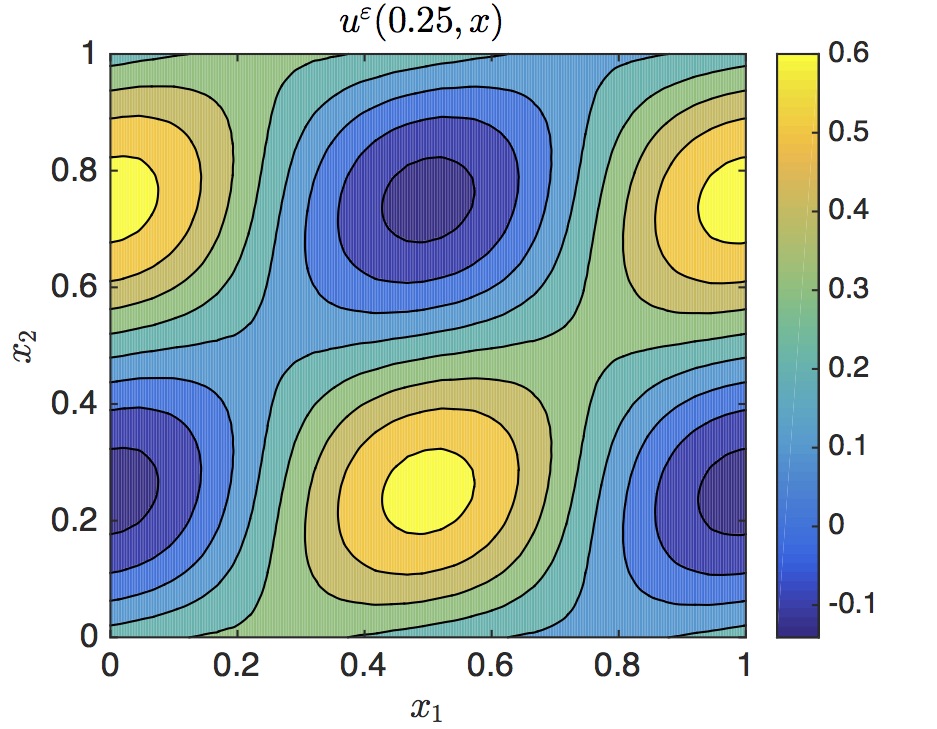}
		\includegraphics[width=0.49\textwidth]{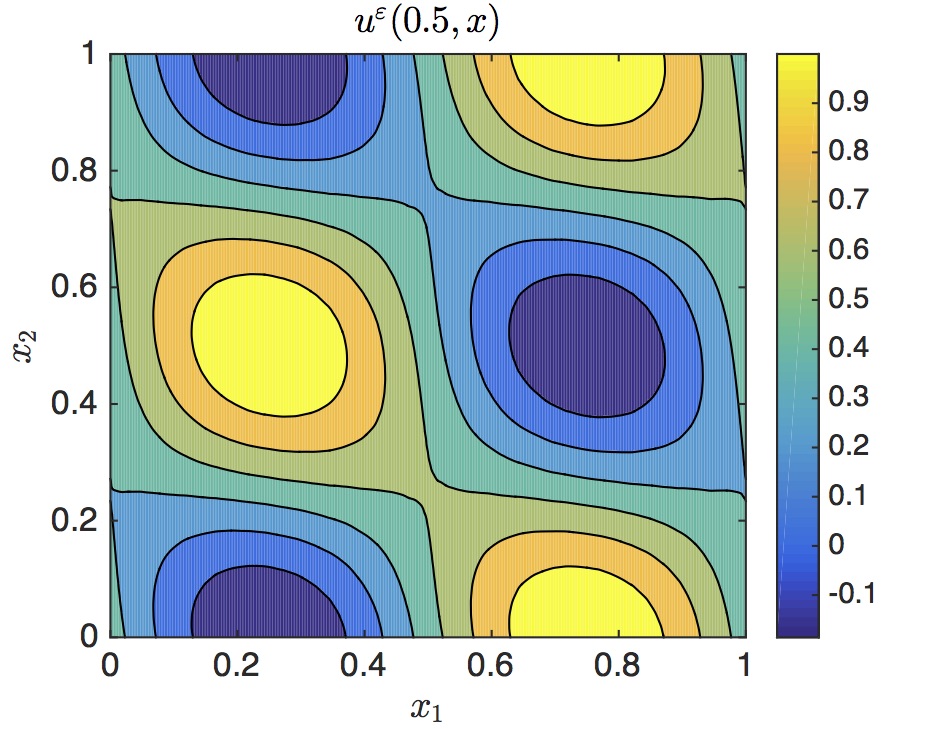}
	\includegraphics[width=0.49\textwidth]{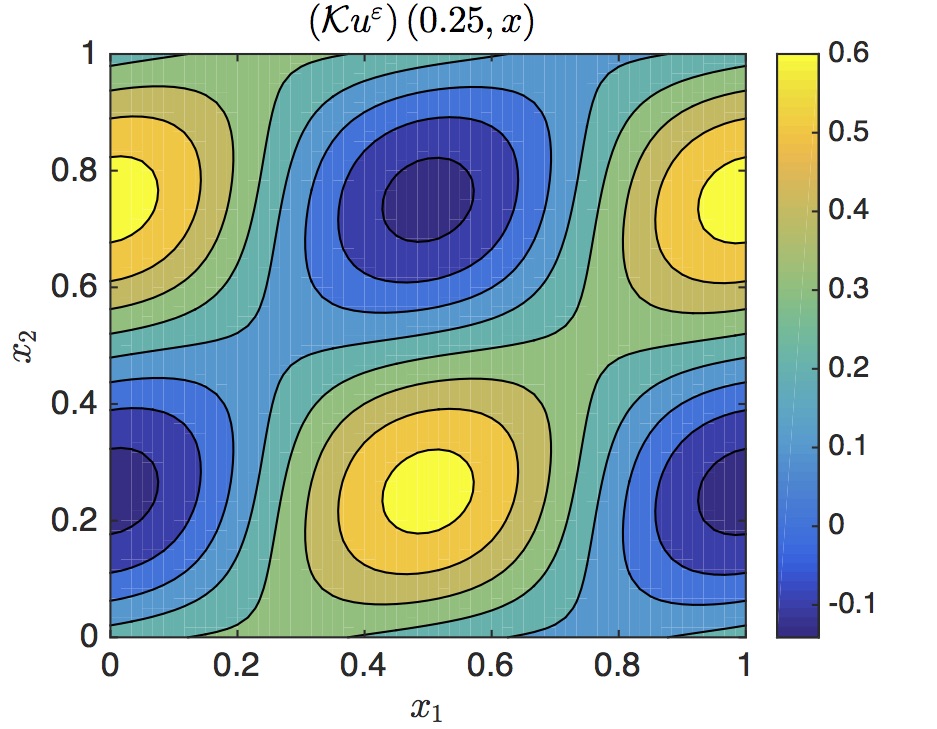}
         \includegraphics[width=0.49\textwidth]{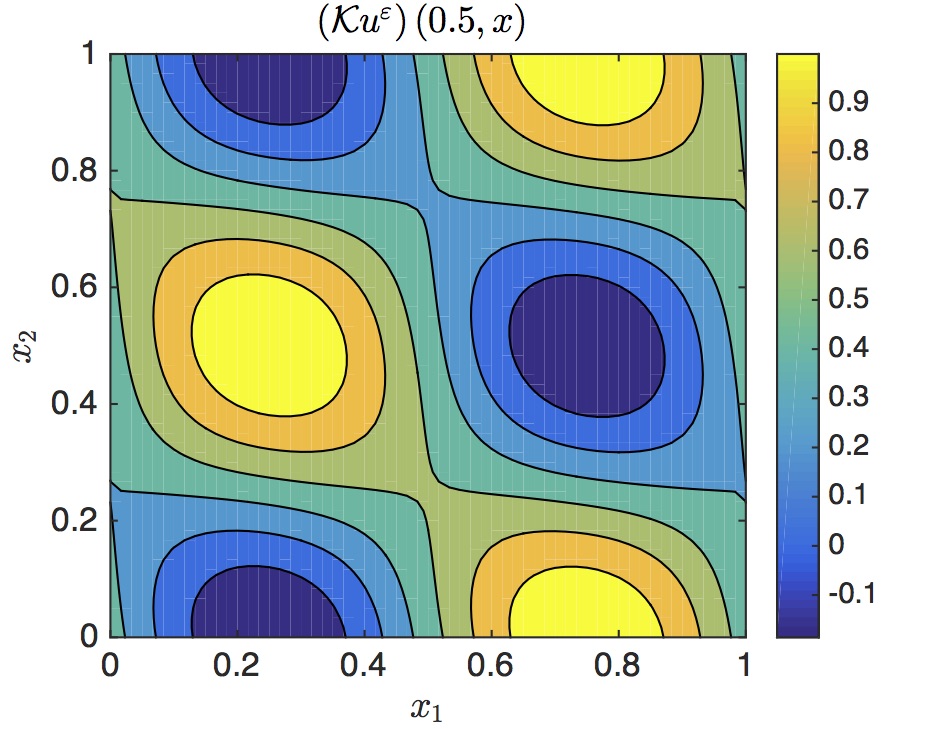}
         \includegraphics[width=0.49\textwidth]{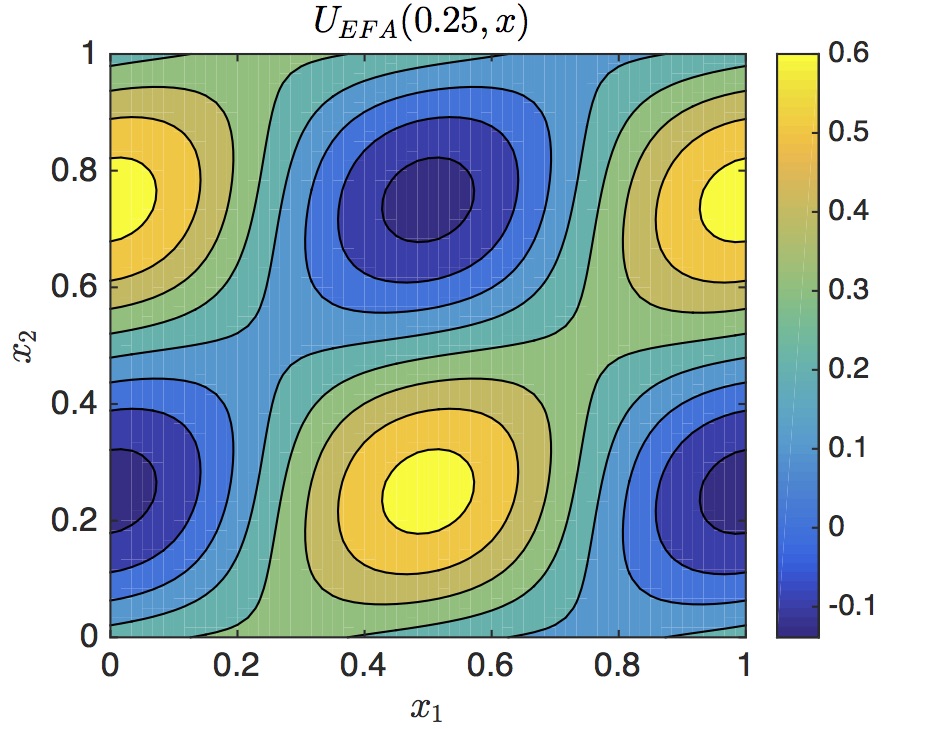}
         \includegraphics[width=0.49\textwidth]{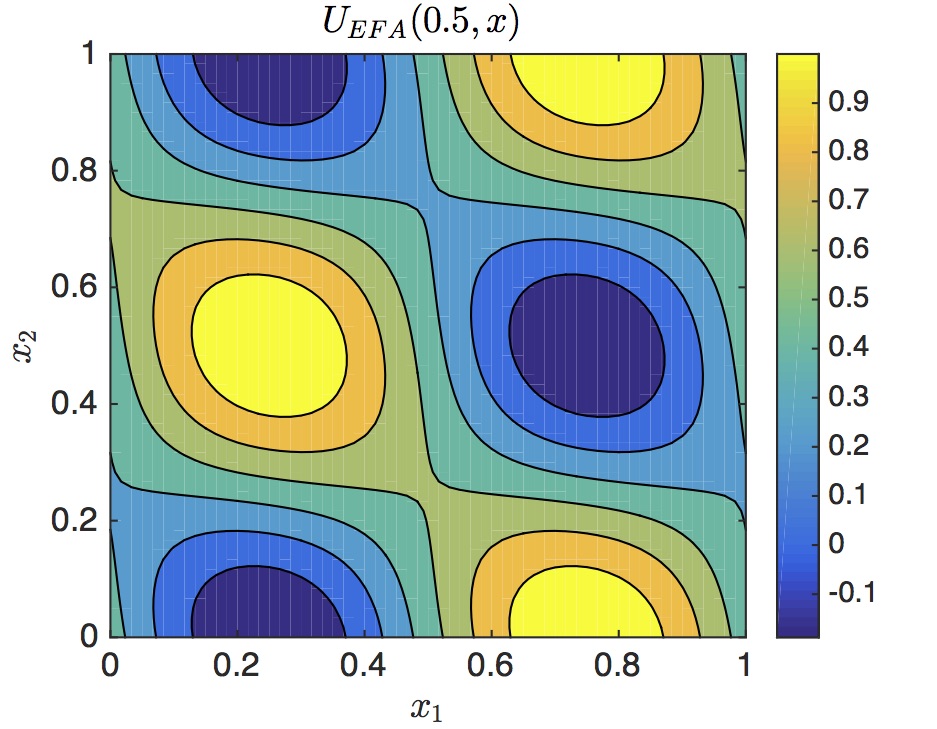}
	\caption[1D Non-periodic Media Solution]
	{Simulation of an isotropic and non-periodic  material. (Top row) A direct numerical solution at times $t=0.25$ and $t = 0.5$. (Middle row) Local average $\left( \mathcal{K} u^{\varepsilon} \right)(t,\bx) $ at times $t=0.25$ and $t=0.5$. (Bottom row) The solution of the equation free approach at times $t=0.25$ and $t = 0.5$. }
	\label{Fig:SolnNonPer2D_Anisotropic}
\end{figure}

\begin{figure}[ht] 
	\centering
	\includegraphics[width=0.6\textwidth]{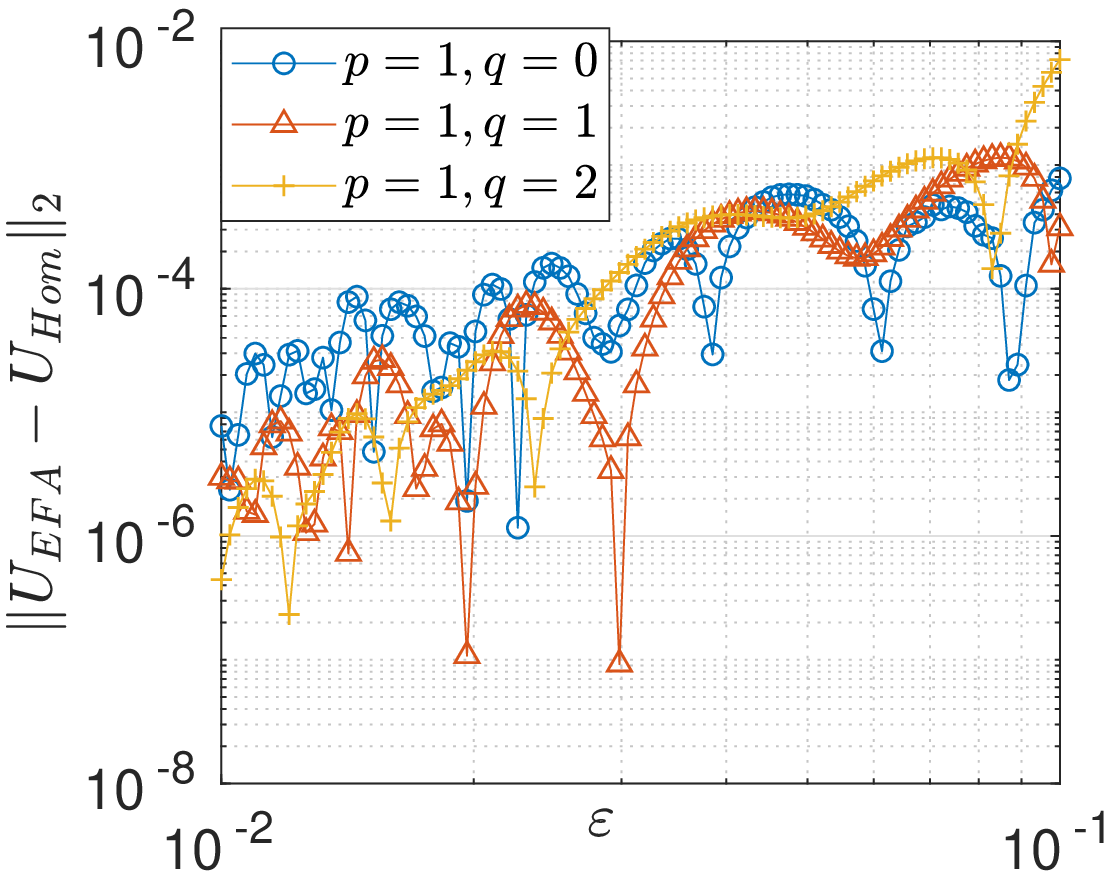}
	\caption[Error in solution]
	{The convergence (as $\e \to 0$) of the EFA to the homogenized solution for  the almost periodic coefficient \eqref{2DCoeff_Anisotropic}. }
	\label{Fig:2DSolnError_c0_AlmostPer}
\end{figure}


\section{Conclusion}
In this article, a numerical method for multiscale wave propagation problems in non-divergence form is proposed and analysed. Multiscale methods based on the HMM framework were previously developed and analysed for wave propagation problems in divergence form, see e.g., \cite{Engquist_Holst_Runborg_1,Arjmand_Runborg_3,Abdulle_Grote_1}. Multiscale wave equations in non-divergence form have homogenised limits, which are different from the limiting equations for wave equations in divergence form. This motivates the need for developing new multiscale methods, which include additional modifications  in comparison to the HMM type methods. Moreover, for the wave equations in divergence form, the analysis of the HMM type algorithms typically relies on the symmetry properties of the operator $-\nabla \cdot A^{\e} \nabla$, which is missing in the theoretical setup of the current study. In this paper, an analysis for the error between the homogenised limit and the solution of the EFA is given, and numerical evidence corroborating the theoretical results are shown.We note that the extension of the method for random media requires an additional Monte Carlo step to be included in the algorithm. However, this extension is conceptually no different than existing works on HMM for random media, see e.g., \cite{Abdulle_Barth_Schwab_13}.

\clearpage


\bibliographystyle{plain}
\bibliography{references}

\end{document}